\newcommand\NoBlackBoxes{\global\overfullrule0pt}
\theoremstyle{plain} 
\newtheorem{theorem}{Theorem} 
\newtheorem{lemma}[theorem]{Lemma}
\newtheorem{proposition}[theorem]{Proposition}
\newtheorem{corollary}[theorem]{Corollary}
\def\4{\kern1pt}
\def\6{\vphantom0}
\def\8{\kern-10pt}
\def\7#1{_{(#1)}}
\theoremstyle{definition}
\newtheorem*{assumption (H1)}{Assumption (H1)}
\newtheorem*{assumption (H2)}{Assumption (H2)}
\theoremstyle{remark}
\newtheorem{remark}[theorem]{Remark}
\numberwithin{equation}{section}
\numberwithin{theorem}{section}
\let\serieslogo@\relax
\let\@setcopyright\relax
\def\speciallabelmark#1{\def\@currentlabel{#1}}
\newcommand{\Cal}{\mathcal}
\newcommand{\E}{{\mathbb E}}
\newcommand{\beqa}{\begin{eqnarray}}
\newcommand{\beqan}{\begin{eqnarray*}}
\newcommand{\eeqa}{\end{eqnarray}}
\newcommand{\eeqan}{\end{eqnarray*}}
\def\beq#1\eeq{\begin{equation}#1\end{equation}}
\begin{document}

\def\ffrac#1#2{\raise.5pt\hbox{\small$\4\displaystyle\frac{\,#1\,}{\,#2\,}\4$}}
\def\ovln#1{\,{\overline{\!#1}}}
\def\ve{\varepsilon}
\def\kar{\beta_r}

\title{Edgeworth-type expansion in the entropic free CLT}

\author{G. P. Chistyakov$^{1,2}$}
\thanks{1) Faculty of Mathematics , 
University of Bielefeld, Germany.} 
\thanks{2) Research supported by SFB 701.}
\address
{Gennadii Chistyakov \newline
Fakult\"at f\"ur Mathematik\newline
Universit\"at Bielefeld\newline
Postfach 100131\newline
33501 Bielefeld \newline
Germany}
\email {chistyak@math.uni-bielefeld.de} 

\author{F. G\"otze$^{1,2}$}
\address
{Friedrich G\"otze\newline
Fakult\"at f\"ur Mathematik\newline
Universit\"at Bielefeld\newline
Postfach 100131\newline
33501 Bielefeld \newline
Germany}
\email {goetze@math.uni-bielefeld.de}

\date{January, 2017}

\subjclass
{Primary 46L50, 60E07; secondary 60E10} 
\keywords  {Free random variables, Cauchy's transform, free entropy, free central limit theorem}

\maketitle
\markboth{ G. P. Chistyakov and F. G\"otze}{Rate of convergence}

\begin{abstract}
We prove an expansion for densities in the free CLT and apply this result to an expansion
in the entropic free central limit theorem assuming a~moment condition of order four for the free summands.  
\end{abstract}

\section{Introduction}
Free convolutions were introduced
by D. Voiculescu~\cite{Vo:1986}, \cite{Vo:1987} and have been studied 
intensively in context of non commutative probability.
The~key concept here is the~notion of freeness,
which can be interpreted as a~kind of independence for
non commutative random variables. As in~classical probability theory where
the~concept of independence gives rise to the~classical convolution, 
the~concept of freeness leads to a~binary operation on the~probability measures, 
the~free convolution. Many classical results 
in the~theory of addition of independent random variables have their
counterparts in Free Probability, such as the~Law of Large Numbers,
the~Central Limit Theorem, the~L\'evy-Khintchine formula and others.
We refer to Voiculescu, Dykema and Nica \cite{Vo:1992}, Hiai 
and Petz~\cite{HiPe:2000}, and Nica and Speicher \cite{NSp:2006} for an~introduction 
to these topics.

In this paper we 
obtain an~analogue of Esseen's expansion for a~density of normalized sums of free identically distributed
random variables under a fourth moment assumption on the free summands. Using this expansion we establish
the rate of convergence of the free entropy of normalized sums of free identically distributed
random variables.

The~paper is organized as follows. In Section~2 we formulate and
discuss the main results of the~paper. In Section~3 and 4 we state 
auxiliary results. 
In Section~5 we discuss the passage to probability measures with bounded supports.
In Section~6 we obtain a~local asymptotic expansion for a~density
in the~CLT for free identically distributed random variables. 
In Section~7 we study the behaviour of subordination functions in the free CLT for truncated free summands.
In Section~8 we discuss the closeness of subordination functions in the free CLT for bounded and unbounded  free
random variables.
In Section~9 we  investigate the rate of convergence for densities in the free CLT in $L_1(-\infty,+\infty)$ and 
Section~10 is devoted to study the rate of convergence for the free entropy of normalized sums of free identically distributed
random variables.
In Section~11 we derive rates of convergence for the free Fisher information of normalized sums of free identically distributed
random variables.

\section{Results}
Denote by $\mathcal M$ the~family of all Borel probability measures
defined on the~real line $\mathbb R$.  
Let $\mu\boxplus\nu$ be
the~free (additive) convolution of $\mu$ and $\nu$ introduced by 
Voiculescu~\cite{Vo:1986} for compactly supported measures. 
Free convolution was extended by
Maassen~\cite{Ma:1992} to measures with finite variance and by Bercovici
and Voiculescu~\cite{BeVo:1993} to the~class $\mathcal M$.
Thus, $\mu\boxplus\nu=\mathcal L(X+Y)$,
where $X$ and $Y$ are free random variables such that $\mu=\mathcal L(X)$ and $\nu=\mathcal L(Y)$.

Henceforth $X,X_1,X_2,\dots$ stands for a~sequence of identically distributed random variables with
distribution $\mu=\mathcal L(X)$. Define 
$m_k(\mu):=\int_{\mathbb R}u^k\,\mu(du)$,
where $k=0,1,\dots$.

The~classical CLT says that if $X_1,X_2,\dots$ are independent
and identically distributed random variables with a~probability 
distribution $\mu$ such that $m_1(\mu)=0$ and $m_2(\mu)=1$, 
then the~distribution function $F_n(x)$ of 
\begin{equation}\label{2.1}
Y_n:=\frac{X_1+X_2+\dots +X_n}{\sqrt n}
\end{equation}
tends to the~standard Gaussian law $\Phi(x)$ as $n\to\infty$ uniformly
in $x$.

A~free analogue of this classical result was proved by Voiculescu~\cite{Vo:1985} for bounded
free random variables and 
later generalized by Maassen~\cite {Ma:1992} to unbounded random variables.
Other generalizations can be found in \cite{BeVo:1995}, \cite{BeP:1999}, \cite{ChG:2005a}, 
\cite {Ka:2007}--\cite {Ka:2007b}, \cite{P:1996}, \cite{Vo:2000}, \cite{Wa:2010}. 

For $t>0$, the centered semicircle distribution of variance $t$ is the probability measure with density
$p_{w_t}(x):=\frac 1{2\pi t}\sqrt{(4t-x^2)_+}, \,x\in\mathbb R$, where $a_+:=\max\{a,0\}$ for $a\in\mathbb R$. 
Denote by $\mu_{w_t}$ the~probability measure with the~distribution function $w_t(x)$. In the sequel we use the notations
$w_1(x)=w(x)$.

When the~assumption of independence is 
replaced by the~freeness of the~non commutative random variables
$X_1,X_2,\dots,X_n$, the~limit distribution function of (\ref{2.1}) is 
the~semicircle law $w(x)$.  
We denote as well by $\mu_n$ the probability measure with
the distribution function $F_n(x)$.

It was proved in~\cite{BelBer:2004} that if the distribution $\mu$ of $X$ is not a Dirac measure,
then in the free case $F_n(x)$ is Lebesgue absolutely continuous when $n\ge n_1=n_1(\mu)$ is sufficiently large. 
Denote by $p_n(x)$ the density of $F_n(x)$.

In the sequel we denote by $c(\mu),c_1(\mu),c_2(\mu),\dots$
positive constants depending on $\mu$ only. By $c(\mu)$ we denote generic constants in different 
(or even in the same) formula. The symbols $c_1(\mu),c_2(\mu),\dots$ will denote explicit constants.
By $\{\varepsilon_{nk}\}$ denote positive numbers such that $\varepsilon_{nk}\to 0$ as $n\to\infty$.

Wang~\cite{Wa:2010} proved that under the condition $m_2(\mu)<\infty$ 
the density $p_n(x)$ of $F_n(x)$ is continuous for sufficiently large $n$ and  
\begin{equation}\label{asden3}
p_n(x)\le c(\mu),\quad x\in\mathbb R.
\end{equation}

Assume that $m_4(\mu)<\infty,m_1(\mu)=0,m_2(\mu)=1$ and denote 
\begin{equation}\label{2.3****}
a_n:=\frac{m_3(\mu)}{\sqrt n},\quad b_n:=\frac{m_4(\mu)-m_3^2(\mu)-1}n,\quad
d_n:=\frac{m_4(\mu)-m_3^2(\mu)}n,\quad n\in\mathbb N.
\end{equation}
Furthermore, let  $e_n:=(1-b_n)/\sqrt{1-d_n}$ and let $I_n$ and $I_n^*$ denote intervals of the form
\begin{equation}\label{asden1a}
I_n:=\Big\{x\in\mathbb R:|x-a_n|\le \frac 2{e_n}-\frac{\varepsilon_{n1}}n\Big\},\qquad
I_n^*:=\Big\{x\in\mathbb R:|x-a_n|\le \frac 2{e_n}-\sqrt{\frac{\varepsilon_{n1}}{n}}\Big\}.
\end{equation}
In the sequel 
we denote by $\theta$ a real-valued quantity such that $|\theta|\le 1$.

We have derived an asymptotic expansion of $p_n(x)$ for bounded 
free random variables $X_1,X_2,\dots$ in the paper~\cite{ChG:2011}. Improving the methods of this paper and \cite{ChG:2011a} we
obtain an asymptotic expansion of $p_n(x)$ for the case $m_4(\mu)<\infty$. Denote by $v_n(x)$ the function
\begin{equation}\label{vden}
 v_n(x)=\Big(1+\frac 12 d_n-a_n^2-\frac 1{n}-a_nx-\Big(b_n-a_n^2-\frac 1{n}\Big)x^2\Big)p_w(e_nx),\quad x\in\mathbb R.
\end{equation}

\begin{theorem}\label{th7}
Let $m_4(\mu)<\infty$ and $m_1(\mu)=0,\,m_2(\mu)=1$. Then there exist sequences $\{\varepsilon_{n1}\}$ and $\{\varepsilon_{n2}\}$
such that
\begin{equation}\label{asden}
p_n(x+a_n)=
v_n(x)+\rho_{n1}(x)+\rho_{n2}(x),\quad x\in I_n^*-a_n,
\end{equation}
where, for $ x\in I_n^*-a_n$,
\begin{equation}
|\rho_{n1}(x)|\le\frac {\varepsilon_{n1}}{n}\frac{c(\mu)}{(4-(e_nx)^2)^{3/2}},\label{asden1} \\
\end{equation}
and for $ x\in (I_n-a_n)\setminus(I_n^*-a_n)$,
\begin{equation}
|\rho_{n1}(x)|\le \sqrt{\frac {\varepsilon_{n1}}{n}}\frac {c(\mu)}{(4-(e_nx)^2)^{1/2}}.\label{asden1*}
\end{equation}
In $(\ref{asden})$
$\rho_{n2}(x)$ is a continuous function such that
\begin{equation}
0\le\rho_{n2}(x)\le c(\mu)\quad\text{ and}\quad \int_{I_n-a_n}\rho_{n2}(x)\,dx=o(1/n^{2}).\label{asden1**}
\end{equation}
Moreover,
\begin{equation}\label{asden2}
\int_{\mathbb R\setminus I_n}p_n(x)\,dx\le \frac {\varepsilon_{n2}}n. 
\end{equation}
\end{theorem}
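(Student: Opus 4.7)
The plan is to work on the Cauchy-transform side and recover the density via $p_n(x) = -\pi^{-1}\lim_{y\downarrow 0}\Ima G_{\mu_n}(x+iy)$. I would first reduce to compactly supported measures by the truncation argument developed in Sections 5--8: replace $\mu$ by a suitable truncation $\tilde\mu$ with close moments, then control how much the subordination functions and the associated densities move when one passes from $\mu_n$ to the analogous sum for $\tilde\mu$. The outer tail estimate (\ref{asden2}) would arise essentially as a by-product of this truncation, combining the uniform density bound (\ref{asden3}) with the fourth moment control.

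For the reduced bounded case the engine is the subordination representation for the $n$-fold free convolution: there is an analytic self-map $Z_n$ of the upper half-plane such that $G_{\mu_n}(z) = G_\mu(Z_n(z))$ after the $\sqrt n$-scaling built into $\mu_n$, characterized by a fixed-point equation in terms of $F_\mu = 1/G_\mu$. Under $m_4(\mu)<\infty$, the reciprocal Cauchy transform $F_\mu(w)$ admits a fourth-order asymptotic expansion in $1/w$ as $w\to\infty$ non-tangentially; substituting this into the subordination equation and solving perturbatively in the small parameter $1/n$ should produce a uniform asymptotic expansion of $Z_n(z)$ in a neighborhood of $[a_n-2/e_n,\,a_n+2/e_n]$, accurate to order $o(1/n)$, whose singularity is carried by the analytic square root $\sqrt{(z-a_n)^2 - 4/e_n^2}$.

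Taking imaginary parts of $G_{\mu_n}(z) = G_\mu(Z_n(z))$ as $\Ima z\downarrow 0$, the leading piece reproduces the rescaled semicircle density $p_w(e_n x)$, and the first correction supplies exactly the polynomial factor $1 + d_n/2 - a_n^2 - 1/n - a_n x - (b_n - a_n^2 - 1/n)x^2$ appearing in $v_n(x)$; the specific combination of $a_n$, $b_n$, $d_n$ and $e_n$ is forced by matching the moments $m_1(\mu_n)$, $m_2(\mu_n)=1$, and the higher moments that enter through the expansion of $F_\mu$. The remainder then decomposes into $\rho_{n1}$, which collects the next-order term in the expansion of $Z_n$ and inherits the singularity $(4-(e_n x)^2)^{-3/2}$ in the bulk, degenerating to $(4-(e_n x)^2)^{-1/2}$ in the narrow transition layer where the expansion parameter fails, and $\rho_{n2}$, which absorbs the nonnegative perturbation introduced by the passage from $\mu$ to its truncation.

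The main obstacle I expect is uniform control of the expansion of $Z_n$ up to the spectral edges: since $Z_n'(z)$ blows up at $a_n \pm 2/e_n$, the perturbative expansion deteriorates on a layer of width of order $n^{-1/2}$ around the edges. This explains the split of $I_n$ into the inner region $I_n^*$, where one extra order of expansion is accessible and the sharp bound with denominator $(4-(e_n x)^2)^{3/2}$ holds, and the thin shell $I_n\setminus I_n^*$, where only the weaker square-root denominator can be maintained. Managing this transition, together with verifying the nonnegativity of $\rho_{n2}$ and the integral estimate $\int_{I_n-a_n}\rho_{n2}\,dx = o(1/n^2)$, are the genuinely delicate steps; I anticipate using the compactly supported expansion of \cite{ChG:2011} together with the subordination estimates of Sections 7--8 to close these two points.
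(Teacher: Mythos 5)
Your outline follows the same broad route as the paper (truncation, subordination, perturbative expansion of the subordination function, Stieltjes inversion, and a bulk/edge split), but two of the statements you defer to "delicate steps" are exactly where your proposed mechanisms would fail. First, the tail bound \eqref{asden2}: you claim it comes from the uniform density bound \eqref{asden3} together with fourth-moment control, but $\mathbb R\setminus I_n$ begins essentially at the spectral edge $a_n\pm 2/e_n$, where a Markov/moment bound for $\mu_n$ gives only $O(1)$ information and certainly not $o(1/n)$ mass control; the truncation itself only removes $o(n^{-2})$ of mass and says nothing about how much of $\mu_n$ sits just beyond the edge. What is actually needed is a quantitative superconvergence statement at the level of distribution functions, namely $\sup_x|F_n(x+a_n)-\int_{-\infty}^x v_n(u)\,du|\le c(\mu)\varepsilon_{n2}/n$, which the paper imports from Theorem~2.6 of \cite{ChG:2013}; this is also the only place the sequence $\varepsilon_{n2}$ is defined. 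Without such a Kolmogorov-distance rate, your plan has no route to \eqref{asden2}.

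Second, the construction of $\rho_{n2}$: if you define it as "the perturbation introduced by the passage from $\mu$ to its truncation," i.e.\ as the difference between $p_n$ and the density built from the truncated measure, there is no reason it should be nonnegative, nor that its integral be $o(n^{-2})$ as required in \eqref{asden1**}. The paper's device is different: it splits the subordination representation $G_{\mu_n}(z)=\int_{\mathbb R}\mu(du)/(S_n(z)-u/\sqrt n)$ into the ranges $|u|\le\sqrt{n-1}/3$ and $|u|>\sqrt{n-1}/3$; each piece is the Cauchy transform of a nonnegative absolutely continuous measure, so $p_n=p_{n1}+p_{n2}$ with $p_{n2}\ge 0$ and $\int p_{n2}=\mu(\{|u|>\sqrt{n-1}/3\})=o(n^{-2})$ by the fourth-moment tail, and one sets $\rho_{n2}:=p_{n2}$. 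The remaining term $\rho_{n1}$ then requires pointwise comparison of the two subordination functions $S_n$ (for $\mu$) and $T_n$ (for the truncated measure) on the real line — carried out in Section~8 via their explicit inverses and the auxiliary point $\tilde x$ with $\Re S_n(x)=\Re T_n(\tilde x)$ — and it is this comparison, not the failure of the perturbative expansion of $T_n$ alone, that produces the two regimes $I_n^*$ and $I_n\setminus I_n^*$ with the exponents $3/2$ and $1/2$ in \eqref{asden1} and \eqref{asden1*}. Your sketch asserts this closeness of subordination functions but supplies no mechanism for proving it up to the edge, which is the technical heart of the theorem. Note also that the paper truncates the measure $\tau$ in the Nevanlinna representation of $F_\mu$, not $\mu$ itself, which is what makes the inverse functions $S_n^{(-1)},T_n^{(-1)}$ explicitly comparable.
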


\begin{corollary}\label{corth7.1}
Let $m_4(\mu)<\infty$ and $m_1(\mu)=0,\,m_2(\mu)=1$, 
then
\begin{equation}\label{2.9}
\int_{\mathbb R}|p_n(x)-p_w(x)|\,dx=\frac {2|m_3(\mu)|}{\pi\sqrt n}+c(\mu)\theta \Big(\Big(\frac{\varepsilon_{n1}}n\Big)^{3/4}+\frac{1}n\Big).
\end{equation}
\end{corollary}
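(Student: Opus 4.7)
My plan is to integrate the pointwise expansion of Theorem~\ref{th7} and isolate the leading contribution. After the substitution $y=x-a_n$, the quantity of interest becomes $\int_{\mathbb R}|p_n(y+a_n)-p_w(y+a_n)|\,dy$, which I would split as integrals over $I_n-a_n$ and its complement. On $I_n-a_n$, the expansion (\ref{asden}) lets me replace $p_n(y+a_n)$ by $v_n(y)+\rho_{n1}(y)+\rho_{n2}(y)$, and the triangle inequality separates three contributions: a main term $|v_n(y)-p_w(y+a_n)|$ together with $|\rho_{n1}(y)|$ and $\rho_{n2}(y)$.

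The main computation is $\int|v_n(y)-p_w(y+a_n)|\,dy$. Since $e_n=1+O(1/n)$ and $a_n=O(1/\sqrt n)$, expanding to leading order in these small parameters gives $v_n(y)\approx p_w(y)-a_nyp_w(y)$ and $p_w(y+a_n)\approx p_w(y)+a_np_w'(y)$, so the dominant difference is $-a_n\bigl[yp_w(y)+p_w'(y)\bigr]$. The algebraic identity
\[
yp_w(y)+p_w'(y)=\frac{y(3-y^2)}{2\pi\sqrt{4-y^2}}
\]
combined with the elementary evaluation $\int_{-2}^2|y(3-y^2)|/(2\pi\sqrt{4-y^2})\,dy=2/\pi$ (via the substitution $u=4-y^2$) then delivers the stated leading term $|a_n|\cdot 2/\pi=2|m_3(\mu)|/(\pi\sqrt n)$. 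The higher-order contributions---from $O(1/n)$ coefficients in $v_n$ and from the Taylor remainder for $p_w(y+a_n)$---must be absorbed into the error $O((\varepsilon_{n1}/n)^{3/4}+1/n)$.

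For the remainder terms, (\ref{asden1}) on $I_n^*-a_n$ together with the substitution $u=e_ny$ and the closed-form antiderivative $\int(4-u^2)^{-3/2}\,du=u/\bigl(4\sqrt{4-u^2}\bigr)$ yields $\int|\rho_{n1}|\,dy=O((\varepsilon_{n1}/n)^{3/4})$, since the singular integral contributes $(\varepsilon_{n1}/n)^{-1/4}$ while the pointwise bound carries the prefactor $\varepsilon_{n1}/n$; (\ref{asden1*}) gives the same order on the thin strip $(I_n-a_n)\setminus(I_n^*-a_n)$. The function $\rho_{n2}$ integrates to $o(1/n^2)$ by (\ref{asden1**}), which is negligible. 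For $\mathbb R\setminus(I_n-a_n)$, (\ref{asden2}) bounds the $p_n$-contribution by $\varepsilon_{n2}/n$, and $\int_{\mathbb R\setminus I_n}p_w\,dx$ is controlled by the square-root vanishing of $p_w$ at the endpoints. I expect the main obstacle to lie in the boundary-layer analysis of $|v_n(y)-p_w(y+a_n)|$ near $y=\pm 2$, where the Taylor expansion breaks down and the supports of $v_n$ and $p_w(\cdot+a_n)$ differ by $O(a_n)$; fitting this into the stated error requires exploiting the inner threshold $I_n^*$ and matching to the outer strip via (\ref{asden1*}).
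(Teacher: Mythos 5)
Your plan follows essentially the same route as the paper's Section~9 proof: split off $\mathbb R\setminus I_n$ via (\ref{asden2}), insert the expansion (\ref{asden}) on $I_n$, reduce the main term to $\frac{|a_n|}{2\pi}\int_{-2}^{2}|x|\,|3-x^2|\,(4-x^2)^{-1/2}\,dx=\frac{2|a_n|}{\pi}$, and bound $\int|\rho_{n1}|$ by $c(\mu)(\varepsilon_{n1}/n)^{3/4}$ separately on $I_n^*-a_n$ and on the thin strip, with $\rho_{n2}$ negligible by (\ref{asden1**}). The boundary-layer/support-mismatch issue you flag near $y=\pm2$ is exactly the step the paper absorbs into its $c(\mu)\theta\,n^{-1}$ error (and into the error of its initial splitting) without further detail, so your proposal matches the paper's argument in both substance and level of rigor.
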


In \cite{ChG:2011} we proved analogous results
for {\it bounded} free random variables and in \cite{ChG:2011a} 
assuming  a finite moment of order eight.

Recall that, if the random variable $X$ has density $f$, then the classical entropy of a
distribution of $X$ is defined as $h(X)=-\int_{\mathbb R}f(x)\log f(x)\,dx$,
provided the positive part of the integral is finite. Thus we have $h(X)\in[-\infty,\infty)$.
A much stronger statement than the classical CLT -- the entropic central limit theorem --
indicates that, if for some $n_0$, or equivalently, for all $n\ge n_0$, $Y_n$ from (\ref{2.1})
have absolutely continuous distributions with finite entropies $h(Y_n)$, then there is convergence
of the entropies, $h(Y_n)\to h(Y)$, as $n\to \infty$, where $Y$ is a standard Gaussian random variable. 
This theorem is due to Barron~\cite{Ba:1986}. Artstein, Bally, Barthez, and Naor~\cite{Ar:2004} have solved 
an old question raised by Shannon about the monotonicity of entropy under convolution. The relative entropy 
$$
D(X)=D(X||Z)=h(Z)-h(X),
$$
where the normal random variable $Z$ have the same mean and the same variance as $X$, is nonnegative  and 
serves as kind of a distance to the class of normal laws. Thus, the entropic central limit theorem may be reformulated
as $D(Y_n)\downarrow 0$, as long as $D(Y_{n_0})<+\infty$ for some $n_0$.

Recently Bobkov, Chistyakov and G\"otze~\cite{BChG:2013}
found the rate of convergence to zero of $D(Y_n)$ and for the random variables $X$ with $\E |X|^k<\infty, \,k\ge 4$,
have obtained an Edgeworth-type expansion of $D(Y_n)$ as $n\to\infty$.

Let $\nu$ be a probability measure on $\mathbb R$. We assume below that $m_1(\nu)=0$ and $m_2(\nu)=1$.
The quantity
\begin{equation}\notag
\chi(\nu)=\int\int_{\mathbb R\times \mathbb R}\log|x-y|\,\nu(dx)\nu(dy)+\frac 34+\frac 12\log 2\pi,
\end{equation}
called free entropy, was introduced by Voiculescu in~\cite{Vo:1993}. Free entropy $\chi$ behaves like 
the classical entropy $h$. 
In particular, the free entropy is maximized 
by the standard semicircular law $w$ with the value $\chi(w)=\frac 12\log 2\pi e$ among all 
probability measures with variance one~\cite{HiPe:2000}, \cite{Vo:1997}. 
Shlyakhtenko~\cite{Sh:2007}
has proved that $\chi(\mu_n)$ decreases monotonically, i.e., the Shannon hypothesis holds in the free case as well.

Wang~\cite{Wa:2010}
has proved a~free analogue of Barron's result: the free entropy $\chi(\mu_n)$ converges to the semicircular entropy.
As in the classical case a relative free entropy
$$
D(\nu||\mu_w)=\chi(\mu_w)-\chi(\nu)
$$
is nonnegative and serves as kind of a distance to the class of semicircular laws.

We derive an optimal rate of convergence in the free CLT
for free random variables with a~finite moment of order four. In previous results \cite{ChG:2011} we showed  
an analogous result for bounded free random variables and in \cite{ChG:2011a} for free random variables with a finite moment of order eight.
\begin{corollary}\label{corth7.2}
Let $m_4(\mu)<\infty$ and $m_1(\mu)=0,\,m_2(\mu)=1$. Then, for every fixed $1<q\le 1.01$,
\begin{align}
D(\mu_n||\mu_w)=\frac {m_3^2(\mu)}{6n}+\theta\Big(c(\mu,q)\Big(\frac{\varepsilon_{n1}}n\Big)^{1+\frac 1{2q}}
+c(\mu)\frac{\varepsilon_{n2}}n\Big),\label{co2} 
\end{align} 
where $c(\mu,q)>0$ is a constant depended on $\mu$ and $q$ only. 
\end{corollary}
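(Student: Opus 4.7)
\textbf{Proof plan for Corollary~\ref{corth7.2}.}

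My plan is to exploit the logarithmic-energy representation of the free entropy and then substitute the Edgeworth expansion from Theorem~\ref{th7}. Writing $r(x):=p_n(x)-p_w(x)$ and expanding the double integral, one obtains
\begin{equation*}
D(\mu_n\|\mu_w)=-2\int_{\mathbb R}U_w(y)\,r(y)\,dy\;-\;\iint_{\mathbb R^2}\log|x-y|\,r(x)r(y)\,dx\,dy,
\end{equation*}
where $U_w(y)=\int\log|x-y|\,p_w(x)\,dx$ equals $y^2/4-1/2$ on $[-2,2]$ and has an explicit smooth bounded extension outside. Because $m_1(\mu_n)=m_1(\mu_w)=0$ and $m_2(\mu_n)=m_2(\mu_w)=1$, the polynomial part of $U_w$ annihilates $r$ on $[-2,2]$; what survives in the linear functional is the mass of $p_n$ outside $[-2,2]$, controlled by~(\ref{asden2}) and~(\ref{asden3}) ($O(\varepsilon_{n2}/n)$), together with a genuine $1/n$-contribution computed explicitly by the change of variable $u=e_n(y-a_n)$ inside $I_n$.

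Next I would split $r=r_0+r_1$, where
$$r_0(x):=\bigl(v_n(x-a_n)-p_w(x)\bigr)\,\mathbf 1_{I_n}(x),\qquad r_1:=r-r_0.$$
The main term $-\iint\log|x-y|\,r_0(x)r_0(y)\,dx\,dy$ is evaluated by inserting~(\ref{vden}) and using the Chebyshev expansion
$$\log|x-y|=-2\sum_{k\ge 1}\frac{T_k(x/2)\,T_k(y/2)}{k},\qquad x,y\in(-2,2),$$
together with the elementary semicircular integrals $\int x^\ell T_k(x/2)\,p_w(x)\,dx$ ($0\le\ell\le 2$, $k\ge 1$). Combining with the $1/n$-piece produced by the linear functional, all contributions involving $b_n$, $d_n$, $1/n$ and the edge rescaling $e_n$ cancel, leaving only the quadratic energy of the $-a_n x$ summand of $v_n$. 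Using $\int xT_1(x/2)p_w=1/2$, $\int xT_3(x/2)p_w=-1/2$ and $\int xT_k(x/2)p_w=0$ for all other $k\ge 1$, this collapses to $m_3^2(\mu)/(6n)$.

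For the remainder $r_1$ I would estimate it in $L^1$ and in $L^q$. Integrating the pointwise bound~(\ref{asden1}) against the weight $(4-(e_nx)^2)^{-3q/2}$ over $I_n^*-a_n$ and~(\ref{asden1*}) against $(4-(e_nx)^2)^{-q/2}$ over the boundary layer $(I_n\setminus I_n^*)-a_n$ yields, for $q\in(1,1.01]$,
$$\|\rho_{n1}(\cdot-a_n)\|_{L^1}=O((\varepsilon_{n1}/n)^{3/4}),\qquad\|\rho_{n1}(\cdot-a_n)\|_{L^q}=O((\varepsilon_{n1}/n)^{1/4+1/(2q)}),$$
while~(\ref{asden1**}) and~(\ref{asden2}) take care of $\rho_{n2}$ and the tail. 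Since $\log$ is locally in $L^{q'}$ for every $q>1$, the map $g\mapsto\int\log|\cdot-y|\,g(y)\,dy$ is bounded from $L^q$ into $L^\infty(K)$ on any compact $K$, so Hölder's inequality gives
$$\Bigl|\iint\log|x-y|\,r_1(x)r_1(y)\,dx\,dy\Bigr|\le c_q\,\|r_1\|_{L^1}\|r_1\|_{L^q}\le c(\mu,q)(\varepsilon_{n1}/n)^{1+1/(2q)},$$
and the cross term $\iint\log|x-y|\,r_0 r_1\,dx\,dy$ is handled by the same device together with $\|r_0\|_{L^1}=O(1/\sqrt n)$ from Corollary~\ref{corth7.1}. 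Combining these with the $O(\varepsilon_{n2}/n)$ tail contribution from the linear term gives the error in~(\ref{co2}).

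The hard part will be the arithmetic in the second paragraph: tracking how the many $1/n$-terms from $b_n$, $d_n$, the rescaling $e_n-1$, and the shift $a_n$ appearing in both the linear and the quadratic functional cancel to leave the clean coefficient $1/6$ in front of $m_3^2(\mu)/n$. Once this cancellation is verified, the remaining error analysis is routine.
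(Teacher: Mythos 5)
Your overall route---expanding the logarithmic energy via Theorem~\ref{th7} and treating the purely quadratic remainder by H\"older with exponent $q$ close to $1$---is the same as the paper's, and your $L^1$/$L^q$ bounds for $\rho_{n1}$ reproduce exactly how the paper obtains the $(\varepsilon_{n1}/n)^{1+1/(2q)}$ term. The genuine gap is in the cross term $\iint\log|x-y|\,r_0(x)r_1(y)\,dx\,dy$. Bounding it by $c_q\|r_0\|_{L^1}\|r_1\|_{L^q}$ (or by $\|r_0\|_{L^q}\|r_1\|_{L^1}$) gives at best $c\,n^{-1/2}(\varepsilon_{n1}/n)^{1/4+1/(2q)}$, and this is \emph{not} dominated by $(\varepsilon_{n1}/n)^{1+1/(2q)}+\varepsilon_{n2}/n$: for instance when $m_5(\mu)<\infty$ but $m_6(\mu)=\infty$ one can have $\varepsilon_{n1}\asymp n^{-1/8}$ and $\varepsilon_{n2}\asymp n^{-1/2}$, so your cross-term bound is of order $n^{-43/32}$ while the right-hand side of (\ref{co2}) is of order $n^{-3/2}$. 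The paper's estimate of this term (its $I_{12}(\mu_n)$) does not use H\"older at all: it uses that $x\mapsto\int_{I_n-a_n}\log|x-y|\,v_n(y)\,dy$ is a cubic polynomial in $x$ up to a uniform error $O((\varepsilon_{n1}/n)^{3/2-\delta})$ (via (\ref{exp.5a}), (\ref{exp.6a})), and that the moments $\int_{I_n}x^k(p_n(x+a_n)-v_n(x))\,dx$, $k=0,1,2,3$, are $O(\varepsilon_{n2}/n)$ by integration by parts from the Kolmogorov-distance bound (\ref{th4bpro5.4}). This moment-cancellation step is the essential ingredient missing from your plan; without it the cross term only yields a remainder $o(1/n)$, not the bound stated in (\ref{co2}).

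Two further points. First, the main-term bookkeeping as you describe it gives the wrong constant: the quadratic energy of the $-a_nx\,p_w(x)$ summand alone is $-a_n^2\iint xy\log|x-y|\,p_w(x)p_w(y)\,dx\,dy=\tfrac23a_n^2$, not $\tfrac16a_n^2$. To reach $\tfrac16$ you must retain the order-$a_n$ recentring $p_w(e_n(x-a_n))\approx p_w(x)-a_np_w'(x)$, whose energy contributions $Q(p_w',p_w')=\tfrac12$ and $2Q(p_w',xp_w)=-1$ supply the missing $-\tfrac12a_n^2$; so the cancellation pattern ``only the $-a_nx$ summand survives'' is not the correct one, although you do flag this arithmetic as unverified. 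Second, $U_w$ is not bounded outside $[-2,2]$ (it grows like $\log|y|$), so the tail part of your linear functional still needs something like the paper's Cauchy--Bunyakovsky device $\int_{\mathbb R}(1+|y|)^2p_n(y)^2\,dy\le c(\mu)$ to be reduced to $O(\varepsilon_{n2}/n)$.
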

Hence the remainder term in (\ref{co2}) is of order $o(n^{-1})$ provided that $m_4(\mu)<\infty$. In Sections~5 and 8 we 
explicitly describe the sequences $\{\varepsilon_{n1}\}$ and $\{\varepsilon_{n2}\}$. If we assume
that $m_6(\mu)<\infty$, then it follows from Remarks~\ref{rem8.11} and \ref{rem8.12} (see the end of Section 8)
that the remainder term in (\ref{co2}) is of order $O(n^{-3/2})$.

Given a random variable $X$ with an absolutely continuous density $f$, the Fisher information of $X$ is defined by
$I(X)=\int_{-\infty}^{+\infty}\frac{f'(x)^2}{f(x)}\,dx$, where $f'$ denotes the Radon-Nikodym derivative of $f$. In all other cases,
let $I(X)=+\infty$. With the first two moments of $X$ being fixed, $I(X)$ is minimized for the normal random variable $Z$ with 
the same mean and the same variance as $X$, i.e. $I(X)\ge I(Z)$ (which is a variant of Cram\'er-Rao's inequality).

Baron and Johnson have proved in \cite{BaJoh:2004} that $I(Y_n)\to I(Z)$, as $n\to\infty$, if and only if $I(Y_{n_0})<\infty$.
In  classical probability and statistics the relative Fisher information
$$
I(X||Z)=I(X)-I(Z)
$$
is used as a strong measure of the probability distribution of  $X$ being near to the Gaussian distribution. The result of Baron and Johnson is equivalent to the fact
that $I(Y_n||Z)\to 0$ as $n\to \infty$, if and only if $I(Y_{n_0}||Z)<\infty$.

Bobkov, Chistyakov and G\"otze~\cite{BChG:2014} found the rate of convergence to zero of $I(Y_n||Z)$ and for the random variables $X$ with $\E |X|^k<\infty, \,k\ge 4$,
have obtained an Edgeworth-type expansion of $I(Y_n||Z)$ as $n\to\infty$.

Suppose that the measure $\nu$ has a density $p$ in $L^3(\mathbb R)$. Then, following Voiculescu~\cite{Vo:1997},
the free Fisher information is
\begin{equation}\notag
\Phi(\nu)=\frac{4\pi^2}3\int_{\mathbb R}p(x)^3\,dx.
\end{equation}
It is well-known that $\Phi(\mu_w)=1$. The free Fisher information has many properties analogous to those
of classical Fisher information. These include the free analog of the Cram\'er-Rao inequality.

Assume now that $m_1(\nu)=0$ and $m_2(\nu)=1$. Consider 
the free relative Fisher information
$$
\Phi(\nu||\mu_w)=\Phi(\nu)-\Phi(\mu_w)\ge 0
$$
as a strong measure of closeness of $\nu$ to Wigner's semicircle law.
Here we obtain an Edgeworth-type expansion for free random variables with a finite moment of order four.
\begin{corollary}\label{corth7.3}
Let $m_4(\mu)<\infty$ and $m_1(\mu)=0,\,m_2(\mu)=1$. Then
\begin{equation}\label{2.11}
\Phi(\mu_n||\mu_{w})=\int_{\mathbb R}p_n(x)^3\,dx-\Phi(\mu_w)=\frac{m_3^2(\mu)}n+c(\mu)\theta\,\frac{\varepsilon_{n1}+\varepsilon_{n2}}n. 
\end{equation} 
\end{corollary}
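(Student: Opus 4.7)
The plan is to substitute the density expansion from Theorem~\ref{th7} into $\int_{\mathbb R} p_n(x)^3\,dx$, cube it out, identify the main term coming from $v_n$, and verify that every remaining term is absorbed into the error $c(\mu)(\varepsilon_{n1}+\varepsilon_{n2})/n$.

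First, using translation invariance I write
$\int_{\mathbb R} p_n(x)^3\,dx=\int_{\mathbb R} p_n(x+a_n)^3\,dx$
and split the integral into three pieces: $I_n^*-a_n$, the thin annulus $(I_n\setminus I_n^*)-a_n$, and the tail $\mathbb R\setminus(I_n-a_n)$. On $I_n^*-a_n$ I use $p_n(x+a_n)=v_n(x)+\rho_{n1}(x)+\rho_{n2}(x)$ and expand
\[
p_n^3=v_n^3+3v_n^2(\rho_{n1}+\rho_{n2})+3v_n(\rho_{n1}+\rho_{n2})^2+(\rho_{n1}+\rho_{n2})^3.
\]
Since $v_n(x)\le c(\mu)\sqrt{4-(e_nx)^2}$, the singular estimate (\ref{asden1}) yields $v_n^2|\rho_{n1}|\le c(\mu)\varepsilon_{n1}/(n\sqrt{4-(e_nx)^2})$, which is integrable with bound $O(\varepsilon_{n1}/n)$. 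The same type of estimate, together with $\rho_{n2}\le c(\mu)$ and $\int_{I_n-a_n}\rho_{n2}=o(1/n^2)$, shows that every term involving $\rho_{n2}$ contributes $o(1/n^2)$, while $\int v_n\rho_{n1}^2$ and $\int\rho_{n1}^3$ are of order $(\varepsilon_{n1}/n)^{5/4}=o(\varepsilon_{n1}/n)$ once one controls the relevant edge integrals $\int(4-(e_nx)^2)^{-\alpha}\,dx$ over $I_n^*-a_n$. On the annulus $(I_n\setminus I_n^*)-a_n$ the weaker bound (\ref{asden1*}), combined with its small width $\sqrt{\varepsilon_{n1}/n}$ and the edge behavior of $v_n$, yields a contribution of order $(\varepsilon_{n1}/n)^{5/4}$. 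Finally, the tail is treated by $p_n\le c(\mu)$ from (\ref{asden3}) and (\ref{asden2}):
\[
\int_{\mathbb R\setminus I_n}p_n(x)^3\,dx\le c(\mu)^2\int_{\mathbb R\setminus I_n}p_n(x)\,dx\le c(\mu)\frac{\varepsilon_{n2}}{n}.
\]

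The heart of the matter is the main term $\int_{\mathbb R}v_n^3\,dx$, which I compute by the substitution $y=e_nx$:
\[
\int v_n^3\,dx=\frac{1}{e_n}\int_{-2}^{2}P(y/e_n)^3 p_w(y)^3\,dy,
\]
where $P(x):=1+\tfrac12 d_n-a_n^2-\tfrac1n-a_nx-(b_n-a_n^2-\tfrac1n)x^2$. Since $a_n=O(n^{-1/2})$ and $A:=\tfrac12 d_n-a_n^2-\tfrac1n$, $B:=b_n-a_n^2-\tfrac1n$ are of order $1/n$, I expand $P^3=1-3a_n x+3A-3Bx^2+3a_n^2 x^2+O(n^{-3/2})$, replace $x=y/e_n$ (costing only $O(n^{-3/2})$), and use the explicit integrals
\[
\int_{-2}^{2}p_w(y)^3\,dy=\frac{3}{4\pi^2},\qquad \int_{-2}^{2}y\,p_w(y)^3\,dy=0,\qquad \int_{-2}^{2}y^2p_w(y)^3\,dy=\frac{1}{2\pi^2}.
\]
Combined with $1/e_n=1+b_n-\tfrac12 d_n+O(n^{-2})$ and $d_n=b_n+1/n$, the $1/n$-coefficients from $A$, $B$, $a_n^2$ and $1/e_n$ miraculously collapse, after a short computation, to
\[
\int_{\mathbb R}v_n^3\,dx=\frac{3}{4\pi^2}+\frac{3\,m_3^2(\mu)}{4\pi^2\,n}+O(n^{-3/2}).
\]
Subtracting the edge contribution $\int_{|e_nx|>2-\varepsilon_{n1}/n}v_n^3\,dx=O((\varepsilon_{n1}/n)^{5/2})$ shows that the integral over $I_n-a_n$ agrees with this up to $o(1/n)$.

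Putting the pieces together gives
\[
\int_{\mathbb R}p_n(x)^3\,dx=\frac{3}{4\pi^2}+\frac{3\,m_3^2(\mu)}{4\pi^2\,n}+c(\mu)\theta\,\frac{\varepsilon_{n1}+\varepsilon_{n2}}{n},
\]
and multiplication by $4\pi^2/3$ together with $\Phi(\mu_w)=\tfrac{4\pi^2}{3}\cdot\tfrac{3}{4\pi^2}=1$ yields (\ref{2.11}). The main obstacle is the book-keeping for the cross term $\int v_n\rho_{n1}\rho_{n2}\,dx$, whose naive bound via $|\rho_{n1}|\le c\varepsilon_{n1}/(n(4-(e_nx)^2)^{3/2})$ and $\rho_{n2}\le c(\mu)$ carries a logarithmic factor $\log(n/\varepsilon_{n1})$; this has to be removed either by redefining the sequences $\varepsilon_{nj}$ (permissible since they are only constrained to tend to $0$) or by exploiting $\int_{I_n-a_n}\rho_{n2}=o(1/n^2)$ more carefully, noting that wherever $\rho_{n2}$ is large the identity $p_n\ge 0$ forces a compensating bound on $v_n+\rho_{n1}$.
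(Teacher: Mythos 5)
Your proposal is correct and follows essentially the same route as the paper's Section~11: split off the tail $\mathbb R\setminus I_n$ via (\ref{asden3}) and (\ref{asden2}), expand the cube of $v_n+\rho_{n1}+\rho_{n2}$ on $I_n-a_n$, bound the remainder terms with (\ref{asden1}), (\ref{asden1*}), (\ref{asden1**}), and compute $\frac{4\pi^2}3\int v_n^3\,dx=1+a_n^2+O(n^{-3/2})$ exactly as you do (your values $\int p_w^3=\frac3{4\pi^2}$, $\int y^2p_w^3=\frac1{2\pi^2}$ and the cancellation $d_n-b_n=\frac1n$ are right).

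The ``main obstacle'' you flag at the end is not actually there. On $I_n-a_n$ one has the uniform bound $|v_n(x)\rho_{n1}(x)|\le c(\mu)$: indeed on $I_n^*-a_n$, $(4-(e_nx)^2)^{-3/2}\le (c\,\varepsilon_{n1}/n)^{-3/4}$ so $|\rho_{n1}|\le c(\mu)(\varepsilon_{n1}/n)^{1/4}$, and on the annulus $|\rho_{n1}|\le c(\mu)$ by (\ref{asden1*}); hence the cross term satisfies $\int|v_n\rho_{n1}|\rho_{n2}\,dx\le c(\mu)\int_{I_n-a_n}\rho_{n2}\,dx=o(n^{-2})$, with no logarithm. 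Equivalently, one can avoid mixed terms altogether by $(|\rho_{n1}|+|\rho_{n2}|)^l\le 2^{l-1}(|\rho_{n1}|^l+|\rho_{n2}|^l)$ and bound only pure powers, which is what the paper does in (\ref{exp.12})--(\ref{exp.15}). In particular no redefinition of $\varepsilon_{n1},\varepsilon_{n2}$ is needed (nor available: they are fixed explicitly in Sections~5 and~8). One small correction: on the annulus the cubic term $\int|\rho_{n1}|^3$ is of order $\varepsilon_{n1}/n$, not $(\varepsilon_{n1}/n)^{5/4}$, but this is still within the stated error $c(\mu)(\varepsilon_{n1}+\varepsilon_{n2})/n$, so the conclusion stands.
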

As in the formula (\ref{co2}) the remainder term here is of order $o(n^{-1})$ if $m_4(\mu)<\infty$ and of order
 $O(n^{-3/2})$ provided that $m_6(\mu)<\infty$.

In contrast to the classical case (see \cite{BChG:2013} and \cite{BChG:2014}) we expect that the asymptotic expansion 
in (\ref{co2}) and (\ref{2.11}) holds with an error of order $n^{-1}$ only.

\section{Auxiliary results}

We need results about some classes of analytic functions
(see {\cite{Akh:1965}, Section~3.

The~class $\mathcal N$ (Nevanlinna, R.) is the~class of analytic 
functions $f(z):\mathbb C^+\to\{z: \,\Im z\ge 0\}$.
For such functions there is an~integral representation
\begin{equation}\label{3.1}
f(z)=a+bz+\int\limits_{\mathbb R}\frac{1+uz}{u-z}\,\tau(du)=
a+bz+\int\limits_{\mathbb R}\Big(\frac 1{u-z}-\frac u{1+u^2}\Big)(1+u^2)
\,\tau(du),\quad z\in\mathbb C^+,
\end{equation}
where $b\ge 0$, $a\in\mathbb R$, and $\tau$ is a~non-negative finite
measure. Moreover, $a=\Re f(i)$ and $\tau(\mathbb R)=\Im f(i)-b$.   
From this formula it follows that 
$f(z)=(b+o(1))z$
for $z\in\mathbb C^+$
such that $|\Re z|/\Im z$ stays bounded as $|z|$ tends to infinity (in other words
$z\to\infty$ non tangentially to $\mathbb R$).
Hence if $b\ne 0$, then $f$ has a~right inverse $f^{(-1)}$ defined
on the~region 
$
\Gamma_{\alpha,\beta}:=\{z\in\mathbb C^+:|\Re z|<\alpha \Im z,\,\Im z>\beta\}
$
for any $\alpha>0$ and some positive $\beta=\beta(f,\alpha)$.

A~function $f\in\mathcal N$ admits the~representation
\begin{equation}\label{3.3}
f(z)=\int\limits_{\mathbb R}\frac{\sigma(du)}{u-z},\quad z\in\mathbb C^+,
\end{equation}
where $\sigma$ is a~finite non-negative measure, if and only if
$\sup_{y\ge 1}|yf(iy)|<\infty$. Moreover $\sigma(\mathbb R)=-\lim_{y\to+\infty}iyf(iy)$.

For $\mu\in\mathcal M$, consider its Cauchy transform $G_{\mu}(z)$
\begin{equation}\label{3.5a}
G_{\mu}(z)=\int_{\mathbb R}\frac{\mu(du)}{z-u},\quad z\in\mathbb C^+. 
\end{equation}

The measure $\mu$ can be recovered from $G_{\mu}(z)$ as
the weak limit of the measures
\begin{equation}\notag
\mu_y(dx)=-\frac 1{\pi}\Im G_{\mu}(x+iy)\,dx,\quad x\in\mathbb R,\,\,y>0,
\end{equation}
as $y\downarrow 0$. If the function $\Im G_{\mu}(z)$ is continuous at $x\in\mathbb R$,
then the probability distribution function $D_{\mu}(t)=\mu((-\infty,t))$ is differentiable
at $x$ and its derivative is given by 
\begin{equation}\label{3.4}
D_{\mu}'(x)=-\Im G_{\mu}(x)/\pi. 
\end{equation}
This inversion formula
allows to extract the density function of the measure $\mu$ from its Cauchy transform.

Following Maassen~\cite{Ma:1992} and Bercovici and 
Voiculescu~\cite{BeVo:1993}, we shall consider in the~following
the~ {\it reciprocal Cauchy transform}
\begin{equation}\label{3.5}
F_{\mu}(z)=\frac 1{G_{\mu}(z)}.
\end{equation}
The~corresponding class of reciprocal Cauchy
transforms of all $\mu\in\mathcal M$ will be denoted by $\mathcal F$.
This class coincides with the~subclass of Nevanlinna functions $f$
for which $f(z)/z\to 1$ as $z\to\infty$ non tangentially to $\mathbb R$.

The~following lemma is well-known, see~\cite{Akh:1965}, Th. 3.2.1, p. 95. 
\begin{lemma}\label{3.4abl}
Let $\mu$ be a~probability measure such that
\begin{equation}\label{3.4abl1}
m_k=m_k(\mu):=\int\limits_{\Bbb R}u^k\,\mu(du)<\infty,\qquad k=0,1,\dots,2n,\quad n\ge 1.
\end{equation}
Then the~following relation holds
\begin{equation}\label{3.4abl2}
\lim_{z\to\infty}z^{2n+1}\Big(G_{\mu}(z)-\frac 1z-\frac{m_1}{z^2}-
\dots-\frac{m_{2n-1}}{z^{2n}}\Big)=m_{2n}
\end{equation}
uniformly in the~angle $\delta\le\arg z\le\pi-\delta$, 
where $0<\delta<\pi/2$.

Conversely, if for some function $G(z)\in\mathcal N$ the~relation $(\ref{3.4abl2})$
holds with real numbers $m_k$ for $z=iy,y\to\infty$, then $G(z)$ admits
the~representation~$(\ref{3.5a})$, where $\mu$ is a probability measure with moments $(\ref{3.4abl1})$.
\end{lemma}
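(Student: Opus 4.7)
For the direct implication, the plan is to start from the integral representation $G_\mu(z)=\int_{\mathbb R}\mu(du)/(z-u)$ and use the algebraic identity
\begin{equation*}
\frac{1}{z-u}=\sum_{k=0}^{2n-1}\frac{u^k}{z^{k+1}}+\frac{u^{2n}}{z^{2n}(z-u)}.
\end{equation*}
Since $m_0,\dots,m_{2n-1}$ are finite, term-by-term integration is legitimate, and multiplying by $z^{2n+1}$ reduces $(\ref{3.4abl2})$ to showing
\begin{equation*}
\int_{\mathbb R}\frac{z\,u^{2n}}{z-u}\,\mu(du)\longrightarrow m_{2n}
\end{equation*}
uniformly in $\delta\le\arg z\le\pi-\delta$. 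In this sector $|z-u|\ge\Im z\ge|z|\sin\delta$, so $|z/(z-u)|\le 1/\sin\delta$; the integrand is therefore dominated uniformly by $u^{2n}/\sin\delta\in L^1(\mu)$, and since $z/(z-u)\to 1$ pointwise, dominated convergence (applied to arbitrary sequences tending to infinity inside the sector) yields the required uniform limit.

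For the converse, applying $(\ref{3.4abl2})$ along $z=iy$ with only the leading term kept yields $yG(iy)\to 1$, so $\sup_{y\ge 1}|yG(iy)|<\infty$. The representation criterion that follows $(\ref{3.3})$, combined with the sign convention of $G_\mu$, produces a finite non-negative measure $\mu$ with $G(z)=\int_{\mathbb R}\mu(du)/(z-u)$; matching the leading coefficient of the asymptotic forces $\mu(\mathbb R)=1$, so $\mu$ is a probability measure.

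The main obstacle — and the part I expect to be genuinely technical — is identifying the prescribed coefficients with the moments of $\mu$. The naive route of subtracting $G(z)-\sum_{k<2n}m_k/z^{k+1}$ and then applying the direct implication is not available yet, because the finiteness of $m_k(\mu)$ is not a~priori known. My plan is to treat even and odd moments separately. Starting from
\begin{equation*}
-y\,\Im G(iy)=\int_{\mathbb R}\frac{y^2}{u^2+y^2}\,\mu(du)
\end{equation*}
and the elementary identity
\begin{equation*}
\frac{y^2}{y^2+u^2}=\sum_{k=0}^{K-1}(-1)^k\frac{u^{2k}}{y^{2k}}+(-1)^K\frac{u^{2K}}{y^{2K-2}(y^2+u^2)},
\end{equation*}
comparison with the imaginary part of the given expansion yields $y^2\int u^{2K}/(y^2+u^2)\,\mu(du)\to m_{2K}$ inductively for $K=1,\dots,n$. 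Since the integrand is non-negative and increases monotonically to $u^{2K}$ as $y\uparrow\infty$, monotone convergence forces $m_{2K}(\mu)=m_{2K}<\infty$. Once the even moments are in hand, the bound $|u|^{2j+1}\le 1+u^{2j+2}$ gives $u^{2j+1}\in L^1(\mu)$, and the parallel treatment of $-y^2\Re G(iy)=y^2\int u/(u^2+y^2)\,\mu(du)$ identifies the odd moments by dominated convergence. This even-first, odd-second ordering removes the circularity inherent in trying to subtract off moments whose finiteness has not yet been established.
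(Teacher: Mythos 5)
Your proposal is correct; the paper does not prove this lemma but cites it as the classical Hamburger--Nevanlinna theorem (Akhiezer, Th.~3.2.1), and your argument --- the remainder identity plus dominated convergence in the sector for the direct part, and the even-moments-first induction via monotone convergence on $y^2\int u^{2K}(y^2+u^2)^{-1}\,\mu(du)$ for the converse --- is essentially the standard proof given there. The only point worth flagging is the sign convention ($-G_\mu$, not $G_\mu$, lies in $\mathcal N$ as the paper defines it), which you already note and which is an imprecision in the paper's statement rather than in your proof.
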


As shown before, $F_{\mu}(z)$ admits the representation (\ref{3.1}) with $b=1$.
From Lemma~\ref{3.4abl} the following proposition is immediate.
\begin{proposition}\label{3.3^*pro} 
In order that a~probability measure $\mu$ satisfies the assumption $(\ref{3.4abl1})$, where $m_1(\mu)=0$, 
it is necessary and sufficient that
\begin{equation}\label{3.3^*proa}
F_{\mu}(z)=z+\int_{\mathbb R}\frac{\tau(du)}{u-z},\quad z\in\mathbb C^+, 
\end{equation}
where $\tau$ is a nonnegative measure such that $m_{2n-2}(\tau)<\infty$. Moreover
\begin{equation}\label{3.3^*prob}
m_k(\mu)=\sum_{l=1}^{[k/2]}\sum_{s_1+\dots+s_l=k-2,\,s_j\ge 0}m_{s_1}(\tau)\dots m_{s_l}(\tau),\quad k=2,\dots,2n. 
\end{equation}
\end{proposition}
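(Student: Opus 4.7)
I will combine Lemma~\ref{3.4abl} with the Nevanlinna representation criterion recorded just after~\eqref{3.3}: a function $f\in\mathcal N$ has the form $f(z)=\int\sigma(du)/(u-z)$ for some finite nonnegative $\sigma$ iff $\sup_{y\ge 1}|yf(iy)|<\infty$. The link between the moments of $\mu$ and those of $\tau$ is then supplied by the identity $G_\mu F_\mu\equiv 1$.

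For the forward direction, assume $m_{2n}(\mu)<\infty$ and $m_1(\mu)=0$. First I apply Lemma~\ref{3.4abl} to write
\[
G_\mu(z)=\frac 1z+\frac{m_2(\mu)}{z^3}+\dots+\frac{m_{2n}(\mu)}{z^{2n+1}}+o\bigl(z^{-(2n+1)}\bigr)
\]
non-tangentially as $z\to\infty$. Writing $F_\mu(z)=z+\sum_{k\ge 0}c_k/z^{k+1}$ formally and imposing $F_\mu G_\mu=1$ produces a triangular Cauchy-product recursion determining $c_0=-m_2(\mu)$ and $c_1,\dots,c_{2n-2}$ from $m_2(\mu),\dots,m_{2n}(\mu)$. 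Since $F_\mu\in\mathcal F$ we have $F_\mu-z\in\mathcal N$, and the above expansion shows $\sup_{y\ge 1}|y(F_\mu(iy)-iy)|<\infty$, with limit equal to $|c_0|$. The representation criterion then supplies a finite nonnegative $\tau$ with $F_\mu(z)-z=\int\tau(du)/(u-z)$ and $\tau(\mathbb R)=-\lim_{y\to\infty}iy(F_\mu(iy)-iy)=m_2(\mu)$. Expanding the Cauchy integral of $\tau$ at infinity as $-\sum_{k\ge 0}m_k(\tau)/z^{k+1}$ and matching coefficients identifies $m_k(\tau)=-c_k$ for $k=0,\dots,2n-2$; the converse direction of Lemma~\ref{3.4abl} applied to $\tau/\tau(\mathbb R)$ then upgrades this to $m_{2n-2}(\tau)<\infty$.

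The reverse implication runs the same chain backwards: starting from the integral representation with $m_{2n-2}(\tau)<\infty$, Lemma~\ref{3.4abl} gives the asymptotic expansion of $\int\tau(du)/(u-z)$ and hence of $F_\mu$; inverting via $F_\mu G_\mu=1$ extends this to an expansion of $G_\mu$ up to order $z^{-(2n+1)}$ with real coefficients; and the converse part of Lemma~\ref{3.4abl} then delivers $m_{2n}(\mu)<\infty$ with $m_1(\mu)=0$. Formula~\eqref{3.3^*prob} is the closed form of exactly this Cauchy product: writing $F_\mu(z)=z(1-B(z))$ with $B(z)=\sum_{k\ge 0}m_k(\tau)/z^{k+2}$ and expanding $G_\mu=(1/z)\sum_{l\ge 0}B(z)^l$ as a geometric series, the coefficient of $1/z^{k+1}$ collects precisely the sum over multi-indices $(s_1,\dots,s_l)$ with $s_j\ge 0$ of $m_{s_1}(\tau)\cdots m_{s_l}(\tau)$ announced in the proposition. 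The only slightly delicate step is this final combinatorial bookkeeping, but it amounts to a transparent rearrangement of a finite Cauchy product; everything else is a direct application of Nevanlinna theory and Lemma~\ref{3.4abl}.
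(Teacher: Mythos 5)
Your proposal is correct and is essentially the paper's own (unwritten) argument: the paper simply declares the proposition ``immediate'' from Lemma~\ref{3.4abl} together with the representation (\ref{3.1}) for $F_\mu$ with $b=1$, and your chain --- expansion of $G_\mu$ via Lemma~\ref{3.4abl}, transfer to $F_\mu-z\in\mathcal N$ through $F_\mu G_\mu=1$, the criterion (\ref{3.3}) to produce $\tau$, and the converse half of Lemma~\ref{3.4abl} applied to $\tau/\tau(\mathbb R)$ to get $m_{2n-2}(\tau)<\infty$ --- is exactly the intended filling-in, with the reverse implication run backwards as you describe. One caveat concerns the step where you defer to the index set ``announced in the proposition'': carrying out your Cauchy product, the coefficient of $z^{-(k+1)}$ in $\frac 1z\sum_{l}B(z)^l$ with $B(z)=\sum_{s\ge 0}m_s(\tau)z^{-(s+2)}$ is the sum over $s_1+\dots+s_l=k-2l$ (so that, e.g., $m_4(\mu)=m_2(\tau)+m_0(\tau)^2$, which one checks on the semicircle law where $\tau=\mu_w$ and $m_4=2$), whereas (\ref{3.3^*prob}) prints the constraint $s_1+\dots+s_l=k-2$; the printed constraint appears to be a misprint (it disagrees already at $k=4$), and your derivation, made explicit, yields the corrected version $k-2l$.
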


Voiculescu~\cite{Vo:1993} showed for compactly supported probability measures that
there exist unique functions $Z_1, Z_2\in\mathcal F$ such that
$G_{\mu_1\boxplus\mu_2}(z)=G_{\mu_1}(Z_1(z))=G_{\mu_2}(Z_2(z))$ 
for all $z\in\mathbb C^+$.
Using Speicher's combinatorial approach~\cite{Sp:1998} to freeness,
Biane~\cite{Bi:1998} proved this result in the~general case.

Bercovici and Belinschi~\cite{BelBer:2007}, 
Belinschi~\cite{Bel:2008}, Chistyakov and G\"otze \cite {ChG:2005},
proved, using complex analytic methods, that
there exist unique functions $Z_1(z)$ and $Z_2(z)$ in the~class 
$\mathcal F$ such that, for $z\in\mathbb C^+$, 
\begin{equation}\label{3.9}
z=Z_1(z)+Z_2(z)-F_{\mu_1}(Z_1(z))\quad\text{and}\quad
F_{\mu_1}(Z_1(z))=F_{\mu_2}(Z_2(z)). 
\end{equation}
The~function $F_{\mu_1}(Z_1(z))$ belongs again to the~class $\mathcal F$ and 
there exists 
$\mu\in\mathcal M$ such that
$F_{\mu_1}(Z_1(z)) =F_{\mu}(z)$, where $F_{\mu}(z)=1/G_{\mu}(z)$ and 
$G_{\mu}(z)$ is the~Cauchy transform as in (\ref{3.5a}).  
The~measure $\mu$ depends on $\mu_1$ and $\mu_2$ only and $\mu=\mu_1\boxplus\mu_2$.

Specializing to $\mu_1=\mu_2=\dots=\mu_n=\mu$ write $\mu_1\boxplus\dots\boxplus\mu_n=
\mu^{n\boxplus}$.
The~relation (\ref{3.9}) admits the~following
consequence (see for example \cite{ChG:2005}, Section 2, Corollary 2.3).

\begin{proposition}\label{3.3pro}
Let $\mu\in\mathcal M$. There exists a~unique function $Z\in\mathcal F$ 
such that
\begin{equation}\label{3.10}
z=nZ(z)-(n-1)F_{\mu}(Z(z)),\quad z\in\mathbb C^+,
\end{equation}
and $F_{\mu^{n\boxplus}}(z)=F_{\mu}(Z(z))$.
\end{proposition}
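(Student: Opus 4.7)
The plan is to prove the proposition by induction on $n$, reducing everything to the two-variable subordination relation (\ref{3.9}).

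For $n=1$ the identity function $Z(z)=z$ works trivially. Assuming the proposition for $n-1$, pick $W\in\mathcal F$ with $F_{\mu^{(n-1)\boxplus}}(w)=F_\mu(W(w))$ and $w=(n-1)W(w)-(n-2)F_\mu(W(w))$ for all $w\in\mathbb C^+$. Writing $\mu^{n\boxplus}=\mu\boxplus\mu^{(n-1)\boxplus}$ and applying (\ref{3.9}) with $\mu_1=\mu$, $\mu_2=\mu^{(n-1)\boxplus}$, I would obtain $Z_1,Z_2\in\mathcal F$ such that $z=Z_1(z)+Z_2(z)-F_\mu(Z_1(z))$ and $F_\mu(Z_1(z))=F_{\mu^{(n-1)\boxplus}}(Z_2(z))$; by the inductive hypothesis the last expression equals $F_\mu(W(Z_2(z)))$.

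The next key step is to identify $Z_1=W\circ Z_2$ as functions on $\mathbb C^+$. Both maps lie in $\mathcal F$ (the class is closed under composition), so they have non-tangential limit $\cdot/z\to 1$ at infinity, and in particular their images eventually enter a Stolz region $\Gamma_{\alpha,\beta}$ on which $F_\mu$ has a right inverse, as recorded after (\ref{3.1}). On such a region $F_\mu$ is univalent, so the equality $F_\mu(Z_1(z))=F_\mu(W(Z_2(z)))$ forces $Z_1(z)=W(Z_2(z))$ there, and this extends to all of $\mathbb C^+$ by the identity theorem. Setting $Z:=Z_1=W\circ Z_2\in\mathcal F$ and substituting the inductive identity $Z_2(z)=(n-1)Z(z)-(n-2)F_\mu(Z(z))$ into $z=Z_1(z)+Z_2(z)-F_\mu(Z_1(z))$, the terms collect to give $z=nZ(z)-(n-1)F_\mu(Z(z))$, while $F_{\mu^{n\boxplus}}(z)=F_\mu(Z_1(z))=F_\mu(Z(z))$, completing the inductive step.

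For uniqueness within $\mathcal F$, I would consider the holomorphic map $H(w):=nw-(n-1)F_\mu(w)$. Since $F_\mu(w)=w+o(w)$ non-tangentially (again from (\ref{3.1}) with $b=1$), both $H(w)/w\to 1$ and $H'(w)\to 1$ non-tangentially as $|w|\to\infty$. Hence $H$ is univalent on some Stolz region $\Gamma_{\alpha',\beta'}$; any two candidates $Z,\tilde Z\in\mathcal F$ satisfying (\ref{3.10}) take values there for $z$ deep in $\mathbb C^+$, and $H(Z(z))=z=H(\tilde Z(z))$ forces $Z=\tilde Z$ on that region, hence on all of $\mathbb C^+$ by analytic continuation. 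The main obstacle is the identification $Z_1=W\circ Z_2$: turning local invertibility of $F_\mu$ on a Stolz region into a global identity of subordination functions requires a careful quantitative use of the asymptotics of elements of $\mathcal F$; once this is in hand, the remainder is an algebraic substitution and a standard Nevanlinna-type uniqueness argument.
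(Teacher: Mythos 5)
Your proposal is essentially correct, but note that the paper itself contains no proof of Proposition~\ref{3.3pro}: it is quoted as a known consequence of the two-fold subordination relation (\ref{3.9}), with a reference to Corollary~2.3 of \cite{ChG:2005}, so your induction is an independent reconstruction rather than a match of the paper's argument. Your inductive step is sound, and the two points you flag are indeed the only ones needing real work, and both are fillable by standard facts: (i) the identification $Z_1=W\circ Z_2$ follows because every $f\in\mathcal F$ satisfies $f(iy)=iy(1+o(1))$, so both $Z_1(iy)$ and $W(Z_2(iy))$ eventually lie in a Stolz domain $\Gamma_{\alpha,\beta}$ on which $F_\mu$ is univalent --- a slight strengthening of the ``right inverse'' remark after (\ref{3.1}), obtained from $F_\mu'(w)=1+\int_{\mathbb R}(1+u^2)(u-w)^{-2}\,\tau(du)\to 1$ uniformly on $\Gamma_{\alpha,\beta}$ as $\beta\to\infty$ --- after which the identity theorem globalizes the equality; (ii) the same uniform derivative estimate plus convexity of $\Gamma_{\alpha,\beta}$ gives univalence of $H(w)=nw-(n-1)F_\mu(w)$ high up in a Stolz domain, which is exactly what your uniqueness argument needs, since any $Z\in\mathcal F$ solving (\ref{3.10}) has $Z(iy)\sim iy$. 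By comparison, the usual derivation behind the cited corollary works with the symmetric system of $n$ subordination functions $z=\sum_{j}Z_j(z)-(n-1)F_{\mu^{n\boxplus}}(z)$, $F_\mu(Z_j(z))=F_{\mu^{n\boxplus}}(z)$, and concludes $Z_1=\dots=Z_n$ from uniqueness/symmetry, thereby avoiding your composition-identification step; your induction, on the other hand, uses only the two-measure statement exactly as recorded in (\ref{3.9}), at the price of the univalence-plus-identity-theorem argument, which you have handled correctly.
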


Using the~representation (\ref{3.1}) for $F_{\mu}(z)$ we obtain
\begin{equation}\label{7.8}
F_{\mu}(z)=z+\Re F_{\mu}(i)+\int\limits_{\mathbb R}\frac {(1+uz)\,\tau(du)}{u-z},
\quad z\in\mathbb C^+,
\end{equation}
where $\tau$ is a~nonnegative measure such that $\tau(\mathbb R)=\Im F_{\mu}(i)-1$.
Denote $z=x+iy$, where $x,y\in\mathbb R$. We see that, for $\Im z>0$, 
\begin{equation}\notag
\Im \Big(nz-(n-1)F_{\mu}(z)\Big)=y\Big(1-(n-1)I_{\mu}(x,y)\Big),\quad\text{where}\quad                                                                                                                                                                                                                                                                                                                                                                                                                                                                                                                                                                                                                             
I_{\mu}(x,y):=\int\limits_{\mathbb R}\frac{(1+u^2)\,\tau(du)}{(u-x)^2+y^2}.
\end{equation}
For every real fixed $x$, consider the~equation
\begin{equation}\label{7.9}
 y\Big(1-(n-1)I_{\mu}(x,y)\Big)=0,\quad y>0.
\end{equation}
Since $y\mapsto I_{\mu}(x,y),\,y>0$, is positive and monotone, and decreases to $0$ as $y\to\infty$,
it is clear that the~equation (\ref{7.9}) has at most one positive solution. 
If such a~solution exists, denote it
by $y_n(x)$.
Note that (\ref{7.9}) does not have a~solution $y>0$ for any given $x\in\mathbb R$ if and only if
$I_{\mu}(x,0)\le 1/(n-1)$.
Consider the~set $S:=\{x\in\mathbb R:I_{\mu}(x,0)\le 1/(n-1)\}$. We put $y_n(x)=0$ for $x\in S$. 
We proved in~\cite{ChG:2011}, Section~3, p.13, that
the~curve $\gamma_n$ given by the~equation $z=x+iy_n(x),\,x\in\mathbb R$, is continuous and simple.

Consider the~open domain $\tilde{D}_n:=\{z=x+iy,\,x,y\in\mathbb R: y>y_n(x)\}$.
\begin{lemma}\label{l7.4}                                                                                                                
Let $Z\in\mathcal F$ be the~solution of the~equation $(\ref{3.10})$. The function $Z(z)$ maps $\mathbb C^+$
conformally onto $\tilde{D}_n$.  
Moreover the~function $Z(z),\,z\in\mathbb C^+$, 
is continuous up to the~real axis and it establishes a homeomorphism between the real axis and 
the~curve $\gamma_n$.                                                                                                                                      
\end{lemma}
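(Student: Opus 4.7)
The plan is to study the auxiliary function $H(w):=nw-(n-1)F_{\mu}(w)$, $w\in\mathbb C^+$. By Proposition~\ref{3.3pro} the subordination equation (\ref{3.10}) reads $z=H(Z(z))$, so it suffices to show that $H$ restricts to a conformal equivalence $\tilde D_n\to\mathbb C^+$; inversion will then yield $Z$ together with the asserted properties.

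A direct computation using (\ref{7.8}) gives
\[
\Im H(x+iy)=y\bigl(1-(n-1)I_{\mu}(x,y)\bigr),
\]
so $\tilde D_n=\{w\in\mathbb C^+:\Im H(w)>0\}$ and the part of $\gamma_n$ in $\mathbb C^+$ is precisely the zero set of $\Im H$ there. Since $Z\in\mathcal F$ gives $Z(z)\in\mathbb C^+$ and $\Im H(Z(z))=\Im z>0$, we obtain $Z(\mathbb C^+)\subset\tilde D_n$; injectivity of $Z$ is immediate from $z=H(Z(z))$. For surjectivity I would apply the argument principle on the contour $\partial\bigl(\tilde D_n\cap\{|w|<R\}\bigr)$: on the $\gamma_n$-part, $\Im H(w)=0$, so $H(w)-w_0$ stays in the open lower half-plane for every $w_0\in\mathbb C^+$ and hence avoids $0$; on the outer circular arc in $\tilde D_n$, the asymptotic $F_{\mu}(w)=w+o(|w|)$ non-tangentially (from (\ref{3.1}) with $b=1$) yields $H(w)=w+o(|w|)$, and for $R$ large the image winds exactly once around $0$. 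The argument principle then gives a unique preimage of $w_0$, so $H|_{\tilde D_n}\colon\tilde D_n\to\mathbb C^+$ is a conformal bijection whose inverse is $Z$.

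For the boundary statement, the curve $\gamma_n$ compactified at infinity is a Jordan curve on the Riemann sphere (being a continuous simple arc by \cite{ChG:2011}), so $\tilde D_n\cup\{\infty\}$ is a Jordan domain. Applying Carath\'eodory's boundary correspondence theorem to the conformal equivalence $Z\colon\mathbb C^+\cup\{\infty\}\to\tilde D_n\cup\{\infty\}$ extends $Z$ to a homeomorphism of the closures on the sphere. Since $Z(z)/z\to 1$ non-tangentially we have $Z(\infty)=\infty$, and hence $Z$ restricts to a homeomorphism of $\mathbb R$ onto $\gamma_n$.

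The main obstacle is the boundary analysis of $H$ required before Carath\'eodory's theorem can be invoked: one must verify that $F_{\mu}$ extends continuously to the portion of $\gamma_n$ lying on $\mathbb R$. At each $x\in S$ the defining bound $I_{\mu}(x,0)\le 1/(n-1)<\infty$ ensures that the Stieltjes integral in (\ref{7.8}) has a finite real boundary value as $y\downarrow 0$, so $F_{\mu}$ (and hence $H$) extends continuously across such points; together with the analyticity on $\mathbb C^+$ and the simplicity of $\gamma_n$, this identifies $\tilde D_n\cup\{\infty\}$ as a bona fide Jordan domain on the sphere and makes the boundary correspondence theorem applicable.
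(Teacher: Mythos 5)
The paper gives no proof of Lemma~\ref{l7.4} here; it only cites Lemma~3.4 of \cite{ChG:2011}. Your strategy --- invert $H(w)=nw-(n-1)F_{\mu}(w)$ on $\tilde D_n$ via the argument principle and then apply Carath\'eodory's boundary correspondence to the Jordan domain bounded by $\gamma_n\cup\{\infty\}$ on the sphere --- is the standard route and is essentially the one used in that reference, and your identification of $\tilde D_n$ as $\{w\in\mathbb C^+:\Im H(w)>0\}$ and of the continuity of $F_\mu$ at points of $S$ (via $I_\mu(x,0)\le 1/(n-1)$) are the right ingredients.

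There is, however, one step that fails as written: the winding-number count on the circular arc. You invoke $F_{\mu}(w)=w+o(|w|)$, which holds only for $w\to\infty$ \emph{non-tangentially}, whereas the arc $\{|w|=R\}\cap\tilde D_n$ has its endpoints on $\gamma_n$ and therefore contains points at bounded height above $\mathbb R$ (recall $y_n(x)\le\sqrt{(n-1)\,m_2(\mu)}$), i.e.\ points approaching infinity tangentially, where the asymptotic gives no control. The repair is to use the defining property of $\tilde D_n$ itself: writing $F_{\mu}(w)=w+\int_{\mathbb R}\frac{\tau(du)}{u-w}$ as in (\ref{Pas5.0a}), Cauchy--Schwarz gives, for every $w\in\tilde D_n$,
\begin{equation*}
|F_{\mu}(w)-w|\le\Big(\int_{\mathbb R}\frac{\tau(du)}{|u-w|^2}\Big)^{1/2}\tau(\mathbb R)^{1/2}\le\frac{1}{\sqrt{n-1}},
\end{equation*}
whence $|H(w)-w|\le\sqrt{n-1}$ \emph{uniformly} on $\tilde D_n$; for $R\gg\sqrt{n-1}+|w_0|$ the image of the arc then stays within $O(\sqrt n)$ of the arc itself and the total winding of $H-w_0$ around $0$ is $1$, as you claim. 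The same uniform bound (in its Lipschitz form $|F_\mu(z)-F_\mu(z')|\le(1+\tfrac1{n-1})|z-z'|$ on $\overline{\tilde D_n}$, obtained from the identity $\frac1{u-z}-\frac1{u-z'}=\frac{z-z'}{(u-z)(u-z')}$ and the same Cauchy--Schwarz step) is also what upgrades your ``finite boundary value at points of $S$'' to genuine continuity of $H$ on $\overline{\tilde D_n}$, which you need both for the argument principle with a boundary contour partly on $\mathbb R$ and for identifying $\tilde D_n$ as a Jordan domain. Finally, a small slip: $\infty$ is a boundary point of $\tilde D_n$ on the sphere, so the Jordan domain is $\tilde D_n$ itself with boundary $\gamma_n\cup\{\infty\}$, not $\tilde D_n\cup\{\infty\}$; Carath\'eodory then applies exactly as you intend.
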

This lemma was proved in~\cite{ChG:2011} (see Lemma~3.4). The following lemma was proved as well 
in~\cite{ChG:2011} (see Lemma~3.5).

\begin{lemma}\label{l7.5}
Let $\mu$ be a~probability measure such that $m_1(\mu)=0,m_2(\mu)=1$. Assume that 
$\int_{|u|>\sqrt{(n-1)/8}}u^2\,\mu(du)\le 1/10$ for some
positive integer $n\ge 10^3$.
Then the~following inequality holds
\begin{equation}\label{7.11*}
|Z(z)|\ge \sqrt{(n-1)/8},\qquad z\in\mathbb C^+, 
\end{equation} 
where $Z\in\mathcal F$ is the~solution of the~equation $(\ref{3.10})$.
\end{lemma}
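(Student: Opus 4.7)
My plan is to argue by contradiction using the geometric description of the range of $Z$ provided by Lemma~\ref{l7.4}. Suppose there exists $z_0\in\mathbb C^+$ with $|Z(z_0)|<R$, where $R:=\sqrt{(n-1)/8}$, and write $w=\xi+i\eta=Z(z_0)$, so that $\eta>0$. By Lemma~\ref{l7.4} and the definition of $\tilde D_n$ via (\ref{7.9}) (using that $y\mapsto\int(1+u^2)\tau(du)/((u-\xi)^2+y^2)$ is strictly decreasing in $y$), the relation $\eta>y_n(\xi)$ translates into
\[
I(w):=\int_{\mathbb R}\frac{(1+u^2)\tau(du)}{(u-\xi)^2+\eta^2}<\frac{1}{n-1}=\frac{1}{8R^2}.
\]
Writing $\hat\tau(du):=(1+u^2)\tau(du)$, which is the measure of Proposition~\ref{3.3^*pro} (so $F_\mu(z)=z+\int\hat\tau(du)/(u-z)$), I aim to derive the opposite estimate $I(w)\ge 1/(8R^2)$, yielding a contradiction.

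For the first reduction I use the pointwise bound $(u-\xi)^2+\eta^2\le 2(u^2+|w|^2)\le 2(u^2+R^2)$, valid since $|w|<R$, to get $I(w)\ge\frac12\int\hat\tau(du)/(u^2+R^2)$. The latter integral can be evaluated in closed form by computing $\Im F_\mu(iR)$ in two ways. On the one hand, the representation in Proposition~\ref{3.3^*pro} gives
\[
\Im F_\mu(iR)=R+R\int\frac{\hat\tau(du)}{u^2+R^2},
\]
while on the other hand, setting $A:=\int\mu(du)/(u^2+R^2)$ and $B:=\int u\,\mu(du)/(u^2+R^2)$, a direct computation yields $G_\mu(iR)=-B-iRA$, so $F_\mu=1/G_\mu$ gives $\Im F_\mu(iR)=RA/(B^2+R^2A^2)$. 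Combining,
\[
\int\frac{\hat\tau(du)}{u^2+R^2}=\frac{A}{B^2+R^2A^2}-1,
\]
so with $E:=\int u^2\mu(du)/(u^2+R^2)$ (whence $R^2A=1-E$) the goal reduces to the purely arithmetic inequality $E-B^2/A\ge 1/(4R^2+1)$.

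The verification of this inequality is where both hypotheses on $\mu$ are used. Restricting $E$ to $\{|u|\le R\}$ and using $u^2/(u^2+R^2)\ge u^2/(2R^2)$ together with $m_2(\mu)=1$ and $\int_{|u|>R}u^2\mu(du)\le 1/10$ gives $E\ge 9/(20R^2)$. The mean-zero condition $m_1(\mu)=0$ is used via the identity $u/(u^2+R^2)=u/R^2-u^3/(R^2(u^2+R^2))$, which yields
\[
B=-\frac{1}{R^2}\int\frac{u^3\mu(du)}{u^2+R^2},
\]
and a simple split of this integral into $|u|\le R$ and $|u|>R$ (using $\int u^2\mu(du)=1$ and Cauchy--Schwarz combined with the tail assumption on the second part) gives $|B|\le 11/(10R^3)$. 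Combined with the easy lower bound $A\ge(1-R^{-2})/R^2$, this shows $B^2/A=O(1/R^4)$, and since $R^2=(n-1)/8\ge 124$ for $n\ge 10^3$ the inequality $E-B^2/A\ge 1/(4R^2+1)$ follows by a direct numerical check.

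The main obstacle is this last quantitative balance. The centering $m_1(\mu)=0$ is essential, since it reduces $|B|$ from the trivial AM--GM bound $1/(2R)$ to order $1/R^3$; the tail hypothesis is essential to secure a lower bound on $E$ of order $1/R^2$. Both ingredients are needed simultaneously, and the thresholds $1/10$ and $n\ge 10^3$ in the statement are exactly what provide the margin required for the arithmetic to go through with strict inequality.
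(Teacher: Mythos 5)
Your proof is correct. The first half — passing from $w=Z(z_0)\in\tilde D_n$ to the inequality $\int_{\mathbb R}(1+u^2)\tau(du)/((u-\xi)^2+\eta^2)<1/(n-1)$ and then, via $(u-\xi)^2+\eta^2\le 2(u^2+R^2)$ for $|w|<R$, to a lower bound problem for $\int\hat\tau(du)/(u^2+R^2)$ — is the same mechanism as in the cited proof (Lemma~3.5 of \cite{ChG:2011}, to which the present paper defers without reproducing the argument); note you could even bypass Lemma~\ref{l7.4} by taking imaginary parts directly in (\ref{3.10}), as the paper does just before (\ref{7.9}). Where you genuinely diverge is in the second half: the reference-style argument estimates $\hat\tau(\{|u|\le R\})$ from below and uses $|u-w|^2\le 4R^2$ on that set, which forces one to translate the tail hypothesis on $\mu$ into a tail bound for $\hat\tau$; you instead evaluate $\int\hat\tau(du)/(u^2+R^2)$ exactly by computing $\Im F_\mu(iR)$ both from the Nevanlinna representation and from $F_\mu=1/G_\mu$, reducing everything to the quantities $A$, $B$, $E$, which are moments of $\mu$ itself. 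This is a clean way to import the hypothesis, which is stated for $\mu$ and not for $\hat\tau$, and your quantitative steps all check out: $E\ge 9/(20R^2)$ from the tail condition, $|B|\le 11/(10R^3)$ from $m_1(\mu)=0$ and the split at $|u|=R$, $A\ge(1-R^{-2})/R^2$, and with $R^2=(n-1)/8\ge 124.8$ the margin $E-B^2/A\ge 0.44/R^2>1/(4R^2+1)$ is ample. The contradiction then follows since the derived strict lower bound $I(w)>1/(n-1)$ is incompatible with $I(w)\le 1/(n-1)$ in both cases $y_n(\xi)>0$ and $y_n(\xi)=0$.
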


The next lemma was proved in \cite{Ma:1992} and \cite{Wa:2010}.
\begin{lemma}\label{l7.6}
There exists a unique probability measure $\nu$ such that such that
$F_{\mu}(z)=z-G_{\nu}(z),\,z\in\mathbb C^+$, and, for every $n\ge 1$,
$F_{\mu_n}(z)=z-G_{\nu_{n-1}\boxplus w_t}(z),\,z\in\mathbb C^+\cup\mathbb R$,
where the measure $\nu_{n-1}$ is given by $d\nu_{n-1}(x)=d\nu(\sqrt n x)$ and $t=t(n)=(n-1)/n$.
\end{lemma}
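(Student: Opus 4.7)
The plan for the first assertion is to invoke Proposition~\ref{3.3^*pro} with $n=1$. Since $m_1(\mu)=0$ and $m_2(\mu)=1<\infty$, this yields the integral representation $F_\mu(z) = z + \int_{\mathbb R}\tau(du)/(u-z)$ for some nonnegative finite measure $\tau$, and formula~(\ref{3.3^*prob}) with $k=2$ gives $\tau(\mathbb R)=m_2(\mu)=1$. Hence $\tau$ is a probability measure, call it $\nu$, and rewriting the integral via (\ref{3.5a}) produces $F_\mu(z) = z - G_\nu(z)$. Uniqueness of $\nu$ is inherited from the uniqueness of the Nevanlinna representation~(\ref{3.1}).

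For the second identity, the plan is to combine the subordination formula of Proposition~\ref{3.3pro} for $\mu^{n\boxplus}$ with the dilation relation $F_{\mu_n}(z) = F_{\mu^{n\boxplus}}(\sqrt{n}\,z)/\sqrt{n}$, which follows from the definition~(\ref{2.1}) of $Y_n$. Substituting $F_\mu(Z) = Z - G_\nu(Z)$ into $z = nZ(z) - (n-1)F_\mu(Z(z))$ collapses the system to $Z(z) = z - (n-1)G_\nu(Z(z))$ and $F_{\mu^{n\boxplus}}(z) = Z(z) - G_\nu(Z(z))$. Setting $W := Z(\sqrt{n}\,z)$ and using the elementary scaling $G_{\nu_{n-1}}(\zeta) = \sqrt{n}\,G_\nu(\sqrt{n}\,\zeta)$ (coming from $d\nu_{n-1}(x) = d\nu(\sqrt{n}\,x)$), a short computation produces
\begin{equation*}
z - F_{\mu_n}(z) \;=\; \sqrt{n}\,G_\nu(W) \;=\; G_{\nu_{n-1}}\bigl(W/\sqrt{n}\bigr).
\end{equation*}

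It then remains to identify $\omega(z) := W/\sqrt{n}$ as the $\nu_{n-1}$-side subordination function of $\nu_{n-1}\boxplus w_t$, with $t := (n-1)/n$. Substituting $W = \sqrt{n}\,z - (n-1)G_\nu(W)$ gives
\begin{equation*}
\omega(z) \;=\; z - \frac{n-1}{n}\bigl(z - F_{\mu_n}(z)\bigr) \;=\; z - t\,G_{\nu_{n-1}}(\omega(z)),
\end{equation*}
which is exactly the fixed-point equation characterizing the semicircle subordination function. Since $\omega\in\mathcal F$ (it is a convex combination of $z$ and $F_{\mu_n}(z)\in\mathcal F$), the uniqueness statement in~(\ref{3.9}) applied to $\nu_{n-1}\boxplus w_t$ forces $\omega$ to coincide with that subordination function, so $G_{\nu_{n-1}}(\omega(z)) = G_{\nu_{n-1}\boxplus w_t}(z)$ and the claimed identity holds on $\mathbb C^+$. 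The extension to $\mathbb R$ then follows by continuous boundary values: the right-hand side extends continuously to $\mathbb R$ because $\nu_{n-1}\boxplus w_t$ has a bounded absolutely continuous density (thanks to the semicircle component), and the left-hand side inherits this extension from the $\mathbb C^+$-identity.

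The argument is essentially mechanical, so the main thing to watch is the bookkeeping of the $\sqrt{n}$-dilation that translates $\nu$ into $\nu_{n-1}$ and, correspondingly, $Z(\cdot)$ into $\omega(\cdot)$; there is no genuine analytic obstacle once the fixed-point characterization of the semicircle subordination function is in hand.
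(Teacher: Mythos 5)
Your argument is sound, but note that the paper does not prove this lemma at all: it is quoted from Maassen \cite{Ma:1992} and Wang \cite{Wa:2010}, so what you have written is in effect a reconstruction of their argument rather than an alternative to anything in the text. The reconstruction checks out. For the first claim, Proposition~\ref{3.3^*pro} with $m_1(\mu)=0$ gives $F_\mu(z)=z+\int\tau(du)/(u-z)$, and \eqref{3.3^*prob} with $k=2$ gives $\tau(\mathbb R)=m_2(\mu)=1$, so $\tau=\nu$ is a probability measure and $F_\mu=z-G_\nu$; uniqueness is Stieltjes inversion. For the second claim, the algebra is correct: from $z=nZ-(n-1)F_\mu(Z)$ and $F_\mu(Z)=Z-G_\nu(Z)$ one gets $W=\sqrt n\,z-(n-1)G_\nu(W)$ with $W=Z(\sqrt n\,z)$, hence $z-F_{\mu_n}(z)=\tfrac{n-1}{\sqrt n}G_\nu(W)+\tfrac1{\sqrt n}G_\nu(W)=\sqrt n\,G_\nu(W)=G_{\nu_{n-1}}(\omega(z))$ with $\omega=W/\sqrt n$ satisfying $\omega=z-tG_{\nu_{n-1}}(\omega)$, $t=(n-1)/n$, and $\omega=\tfrac1n z+\tfrac{n-1}n F_{\mu_n}\in\mathcal F$.

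The one step you state too loosely is the final identification of $\omega$ with the subordination function of $\nu_{n-1}\boxplus w_t$. The uniqueness assertion attached to \eqref{3.9} concerns the \emph{pair} $(Z_1,Z_2)$ solving the full system, not solutions of the single fixed-point equation $\omega=z-tG_{\nu_{n-1}}(\omega)$, so you cannot invoke it directly. To close the gap, either cite Biane's theorem that this fixed-point equation characterizes the semicircular subordination function uniquely in $\mathcal F$, or reconstruct $Z_2:=z-\omega+F_{\nu_{n-1}}(\omega)=tg+1/g$ with $g:=G_{\nu_{n-1}}(\omega(z))$ and verify the system: since $G_{w_t}$ satisfies $tG_{w_t}(\zeta)^2-\zeta G_{w_t}(\zeta)+1=0$, i.e.\ $G_{w_t}(tg+1/g)=g$ on the relevant range, one gets $F_{w_t}(Z_2)=F_{\nu_{n-1}}(\omega)$ and $z=\omega+Z_2-F_{\nu_{n-1}}(\omega)$, after which the uniqueness in \eqref{3.9} does apply. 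This is routine but should be said. The boundary extension to $\mathbb R$ via the continuity of $G_{\nu_{n-1}\boxplus w_t}$ up to the real axis (Biane, cf.\ Lemma~\ref{l7.7}) is fine for $n\ge 2$; for $n=1$ the statement degenerates to the first claim on $\mathbb C^+$ only.
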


Biane~\cite{Bi:1998} gave the following bound.
\begin{lemma}\label{l7.7}
Fix $t>0$ and the probability measure $\nu$. Then $|G_{\nu \boxplus w_t}(z)|\le t^{-1/2},\,z \in {\mathbb C}^{+}\cup \mathbb R$. 
\end{lemma}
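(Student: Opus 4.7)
The plan is to realize $G(z) := G_{\nu\boxplus w_t}(z)$ as a subordination against $\nu$ and then combine Cauchy--Schwarz with a comparison of imaginary parts. From the explicit formula $G_{w_t}(z) = (z - \sqrt{z^2-4t})/(2t)$ one verifies that $F_{w_t}(z) = z - tG_{w_t}(z)$. Feeding this into (\ref{3.9}) with $\mu_1=\nu$, $\mu_2=w_t$ yields $Z_1(z) = z - tG(z)$, so setting $\omega(z) := z - tG(z)$ produces a map $\omega : \mathbb C^+ \to \mathbb C^+$ together with the subordination identity $G(z) = G_\nu(\omega(z))$.

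Now fix $z \in \mathbb C^+$, write $G = G(z)$, $\omega = \omega(z)$, and set $I := \int_{\mathbb R} |\omega - u|^{-2}\,\nu(du)$. Since $\nu(\mathbb R) = 1$, Cauchy--Schwarz applied to $G = \int (\omega-u)^{-1}\,\nu(du)$ gives $|G|^2 \le I$. Taking imaginary parts in the same representation yields $-\Im G = I\cdot \Im \omega$, so $|G|^2\,\Im \omega \le -\Im G$. Substituting $\Im \omega = \Im z + t(-\Im G)$, this inequality rearranges to
$$
(-\Im G)\,(1 - t|G|^2) \;\ge\; |G|^2\,\Im z.
$$

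If $|G|^2 > 1/t$ at some $z \in \mathbb C^+$, then the left-hand side is non-positive whereas the right-hand side is strictly positive (note $G\neq 0$, since $\Im G_\nu(\omega) < 0$ for every $\omega\in\mathbb C^+$), a contradiction. Hence $|G(z)|\le t^{-1/2}$ on $\mathbb C^+$. The extension to $z\in\mathbb R$ follows from continuity of $G_{\nu\boxplus w_t}$ up to the real line, which is the well-known regularizing effect of free convolution with a semicircle.

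The step requiring the most care is the identification of the subordination function in the closed form $\omega(z) = z - tG(z)$, which uses the special structure of $w_t$ via the identity $F_{w_t}(z) = z - tG_{w_t}(z)$. Once this representation is in hand, the Cauchy--Schwarz bound and the imaginary-part identity combine mechanically to give the key algebraic inequality, and the conclusion follows by a trivial sign argument.
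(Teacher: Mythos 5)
Your argument is correct. Note that the paper itself gives no proof of this lemma: it is quoted as Biane's bound from \cite{Bi:1998}, and what you have written is essentially a reconstruction of Biane's original argument. The identification $F_{w_t}(z)=z-tG_{w_t}(z)$ and hence $Z_1(z)=z-tG_{\nu\boxplus w_t}(z)$ via (\ref{3.9}) is right (it also follows from the quadratic relation $tG_{w_t}^2-zG_{w_t}+1=0$), and the Cauchy--Schwarz step together with $-\Im G=I\,\Im\omega$ and $\Im\omega=\Im z+t(-\Im G)$ gives exactly the inequality $(-\Im G)(1-t|G|^2)\ge |G|^2\,\Im z$, which forces $|G|\le t^{-1/2}$ on $\mathbb C^+$ since $-\Im G>0$ there. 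The only point deserving a remark is the boundary case $z\in\mathbb R$: the continuous extension of $G_{\nu\boxplus w_t}$ to the closed half-plane is itself part of Biane's regularity theory for semicircular convolution (it is not automatic for a bounded analytic function), so you are implicitly using that separate result; but since the lemma's statement already presupposes that $G_{\nu\boxplus w_t}$ has values on $\mathbb R$, passing the bound from $\mathbb C^+$ to the boundary by continuity is legitimate. So your proposal supplies a complete proof where the paper merely cites the literature, at the modest cost of invoking the subordination relation (\ref{3.9}) and the known boundary continuity.
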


\section{Free Meixner measures}

Consider the~three-parameter family of probability measures $\{\mu_{a,b,d}:
a\in\mathbb R, b<1, d<1\}$ with the~reciprocal Cauchy transform
\begin{equation}\label{2.3h}
\frac 1{G_{\mu_{a,b,d}}(z)}=a+\frac 12\Big((1+b)(z-a)+\sqrt{(1-b)^2(z-a)^2-4(1-d)}\Big),
\quad z\in\mathbb C,
\end{equation}
which we will call the~free centered (i.e. with mean zero) Meixner measures. 
In this formula we choose the~branch of the~square root determined by the~condition
$\Im z>0$ implies $\Im (1/G_{\mu_{a,b,d}}(z))\ge 0$.
These measures are counterparts of the~classical measures discovered by Meixner~\cite{Me:1934}.
The~free Meixner type measures occurred in many places in the~literature, see for example 
\cite{BoBr:2006}, \cite{SaYo:2001}.

Saitoh and Yoshida~\cite{SaYo:2001} have proved that
the~absolutely continuous part of the~free Meixner measure $\mu_{a,b,d},a\in\mathbb R,b<1,d<1$, is 
given by
\begin{equation}\label{2.3g}
\frac{\sqrt{4(1-d)-(1-b)^2(x-a)^2}}{2\pi f(x)},
\end{equation}
when $a-2\sqrt{1-d}/(1-b)\le x\le a+2\sqrt{1-d}/(1-b)$, where 
\begin{equation}\notag
f(x):=bx^2+a(1-b)x+1-d;  
\end{equation}

Saitoh and Yoshida proved as well that for $0\le b<1$ the (centered) 
free Meixner measure
$\mu_{a,b,d}$ is $\boxplus$-infinitely divisible.

As we have shown in~\cite{ChG:2011}, Section 4, it follows from Saitoh and Yoshida's results that the~probability measure
$\mu_{a_n,b_n,d_n}$ with the parameters $a_n,b_n,d_n$ from (\ref{2.3****}) is $\boxplus$-infinitely divisible 
and it is absolutely continuous with a~density of the~form (\ref{2.3g}) where
$a=a_n, b=b_n,d=d_n$ for sufficiently large $n\ge n_1(\mu)$.

\section{Passage to measures with bounded supports}

Let us assume that $\mu\in\mathcal M$ and $m_4(\mu)<\infty$. In addition let $m_1(\mu)=0$ and $m_2(\mu)=1$.
By Proposition~\ref{3.3pro}, there exists $Z(z)\in\mathcal F$
such that (\ref{3.10}) holds, and $F_{\mu^{n\boxplus}}(z)=F_{\mu}(Z(z))$.
Hence $F_{\mu_n}(z)=F_{\mu}(\sqrt n S_n(z))/\sqrt n,\,z\in\mathbb C^+$, 
where $S_n(z):=Z(\sqrt n z)/\sqrt n$. Since $m_1(\mu)=0,\,m_2(\mu)=1$ and $m_4(\mu)<\infty$, 
by Proposition~\ref{3.3^*pro},
we have the representation
\begin{equation}\label{Pas5.0a}
F_{\mu}(z)=z+\int_{\mathbb R}\frac{\tau(du)}{u-z},\quad z\in\mathbb C^+, 
\end{equation}
where $\tau$ is a nonnegative measure such that $\tau(\mathbb R)=1$ and $m_2(\tau)<\infty$. 

Denote, for $n\in\mathbb N$,
\begin{equation}
\eta(n;\tau):=inf_{0<\varepsilon\le 10^{-1/2}}g_n(\varepsilon;\tau),\quad\text{where}\quad
g_n(\varepsilon;\tau)=\varepsilon+\frac 1{m_2(\tau)\varepsilon^2}\int_{|u|>\varepsilon\sqrt{n-1}}u^2\,\tau(du).\notag
\end{equation}
It is easy to see that $0<\eta(n;\tau)\le 11$ and $\eta(n;\tau)\to 0$ monotonically as $n\to\infty$. 
Let $\delta_n\in (0,10^{-1/2}]$ be a point at which the infimum of the function $g_n(\varepsilon;\tau)$ is attained.
This means that
\begin{equation}\label{Pas5.0}
\eta(n;\tau)=\delta_n+ \frac 1{m_2(\tau)\delta_n^2}\int_{|u|>\delta_n\sqrt{n-1}}u^2\,\tau(du).
\end{equation}

Consider a function
\begin{equation}\label{Pas5.0b}
F(z)=z+\int_{\mathbb R}\frac{\tau^*(du)}{u-z}:=\int_{|u|\le \delta_n\sqrt{n-1}}\frac{\tau(du)}{u-z},\quad z\in\mathbb C^+.
\end{equation}
This function belongs to the class $\Cal F$ and therefore there exists the probability measure $\mu^*$
such that $F_{\mu^*}(z)=F(z),\,z\in\mathbb R$. The probability measure $\mu^*$ of course depends on
$n$. Moreover we conclude from the inversion formula that $\mu^*([-\frac{\sqrt{10}}3\delta_n\sqrt{n-1},\frac{\sqrt{10}}3\delta_n\sqrt{n-1}])=1$
for $n\ge n_1(\mu)$. Hence it follows that the support of $\mu^*$ is contained in the interval $[-\frac 13\sqrt{n-1},\frac 13\sqrt{n-1}]$.
By Proposition~\ref{3.3^*pro}, we see as well that $m_1(\mu^*)=0$ and
\begin{align}\label{Pas5.1}
m_2(\mu)-m_2(\mu^*)&=
\tau(\mathbb R\setminus[-\delta_n\sqrt{n-1},\delta_n\sqrt{n-1}])\notag\\
&\le\frac 1{\delta_n^2(n-1)}\int_{|u|>\delta_n\sqrt{n-1}}u^2\,\tau(du)\le c(\mu)\frac{\eta(n;\tau)}{n-1}.
\end{align}
Moreover
\begin{align}\label{Pas5.1a}
|m_3(\mu)-m_3(\mu^*)|&=|m_1(\tau)m_0(\tau)-m_1(\tau^*)m_0(\tau^*)|\notag\\
&\le |m_1(\tau)-m_1(\tau^*)|+|m_2(\mu)-m_2(\mu^*)||m_1(\tau^*)|\notag\\
&\le \int_{|u|>\delta_n\sqrt{n-1}}|u|\,\tau(du)+c(\mu)|m_1(\tau^*)|\frac{\eta(n;\tau)}{n-1}\le c(\mu)\frac{\eta(n;\tau)}{\sqrt{n-1}};\notag\\
\end{align}
In the same way
\begin{align}\label{Pas5.1b}
|m_4(\mu)-m_4(\mu^*)|&\le |m_2(\tau)-m_2(\tau^*)|+|m_2(\mu)-m_2(\mu^*)||m_2(\tau^*)|\notag\\
&+|m_1(\tau)-m_1(\tau^*)||m_1(\tau)+m_1(\tau^*)|\le c(\mu)\eta(n;\tau).
\end{align}
Here $m_k(\tau^*),\,k=0,1,2$, denote moments of the measure $\tau^*$.

Let $X^*,X_1^*,X_2^*,\dots$ be free identically distributed random variables such that
$\Cal L(X^*)=\mu^*$. 
Denote $\mu_n^*:=\Cal L((X_1^*+\dots+X_n^*)/\sqrt {n})$. As before, 
by Proposition~\ref{3.3pro}, there exists $W(z)\in\mathcal F$
such that (\ref{3.10}) holds with $Z=W$ and $\mu=\mu^*$, and $F_{(\mu^*)^{n\boxplus}}(z)=F_{\mu^*}(W(z))$.
Hence $F_{\mu_n^*}(z)=F_{\mu^*}(\sqrt {n} T_n(z))/\sqrt {n},\,z\in\mathbb C^+$, 
where $T_n(z):=W(\sqrt {n} z)/\sqrt {n}$. In the sequel we shall need more detailed information about
the behaviour of the functions $T_n(z)$ and $S_n(z)$. By Lemma~\ref{l7.4}, 
these functions are continuous up to the real axis for $n\ge n_1(\mu)$. 
Their values for $z=x\in\mathbb R$ we denote by 
$T_n(x)$ and $S_n(x)$, respectively.
In order to formulate the following results for $T_n(z)$ 
we introduce some notations. Denote by $M_n(z)$ the reciprocal Cauchy transform of the free Meixner
measure $\mu_{a_n,b_n,d_n}$ with the parameters $a_n,b_n$ and $d_n$ from (\ref{2.3****}), i.e.,
$$
M_{n}(z):=a_n+\frac 12\Big(\big(1+b_n\big)(z-a_n)+
\sqrt{\big(1-b_n\big)^2(z-a_n)^2-4\big(1-d_n\big)}\Big),\quad z\in\mathbb C^+.
$$
Denote by $D_n$ the rectangle
\begin{equation}\notag
D_n=\Big\{z\in\mathbb C:0<\Im z\le 3,|\Re z-a_n|\le \frac 2{e_n}-\frac {\varepsilon_{n1}}n\Big\},
\end{equation}
where $\varepsilon_{n1}:=c_1(\mu)(\eta(n;\tau)
+1/\sqrt n)$ and $c_1(\mu)>0$ 
is sufficiently large. In the sequel we assume that $\varepsilon_{n1}$ is always of this form.

Repeating step by the step the arguments of Section~7 (see Subsections 7.2--7.7) of our paper~\cite{ChG:2013}
we establish the following result.
\begin{theorem}\label{th4a}
Let $\mu\in\mathcal M$ such that $m_4(\mu)<\infty$ and $m_1(\mu)=0,\,m_2(\mu)=1$.
Then
there exists a constant $c(\mu)$ such that the following relation holds, for $z\in D_n$ and $n\ge n_1(\mu)$,
\begin{align}
T_n(z)&=M_{n}(z)+\frac {\varepsilon_{n1}}n\,\frac {c(\mu)\,\theta}{\sqrt{(e_n(z-a_n))^2-4}}, \label{th4.1}\\
G_{\mu_n^*}(z)&=\frac 1{T_n(z)}+\frac {m_2(\mu^*)}{nT_n(z)^3}+\frac{c(\mu)\,\theta}{n^{3/2}}.\label{th4.2}
\end{align}
In addition
\begin{equation}\label{th4.1*}
0\le \Im T_n(x)\le c(\mu)\sqrt{\frac {\varepsilon_{n1}}n},\quad \frac 2{e_n}-\frac {\varepsilon_{n1}}n<|x-a_n|\le 3. 
\end{equation}
\end{theorem}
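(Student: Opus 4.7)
The plan is to convert Proposition~\ref{3.3pro}, applied to $\mu^*$, into a concrete functional equation for $T_n$, expand it in powers of $1/\sqrt n$, and compare with the algebraic equation satisfied by $M_n$. Substituting $z\mapsto\sqrt n z$ and $W(\sqrt nz)=\sqrt n T_n(z)$ in (\ref{3.10}), and using the representation (\ref{Pas5.0b}) of $F_{\mu^*}$, one obtains
\[
z = T_n(z) + \frac{n-1}{n}\int_{\mathbb R}\frac{\tau^*(du)}{T_n(z)-u/\sqrt n}.
\]
Since $\mathrm{supp}(\mu^*)\subset[-\tfrac13\sqrt{n-1},\tfrac13\sqrt{n-1}]$, Lemma~\ref{l7.5} applied to $\mu^*$ gives $|T_n(z)|\ge c>0$ on $\mathbb C^+$; combined with $|u|\le\tfrac13\sqrt{n-1}$ on $\mathrm{supp}\,\tau^*$, the kernel $1/(T_n-u/\sqrt n)$ admits a convergent geometric expansion. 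Keeping terms through order $1/n$ and using Proposition~\ref{3.3^*pro} together with (\ref{Pas5.1})--(\ref{Pas5.1b}) to express $m_0(\tau^*),m_1(\tau^*),m_2(\tau^*)$ in terms of $m_2(\mu),m_3(\mu),m_4(\mu)$ up to error $O(\eta(n;\tau))$, this reduces to
\[
z = T_n + \frac{1}{T_n} + \frac{a_n}{T_n^2} + \frac{\alpha_n}{T_n^3} + E_n(T_n,z),\qquad |E_n|\le c(\mu)\,\frac{\varepsilon_{n1}}{n},
\]
with $\alpha_n$ an explicit combination of $b_n,d_n$.

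Rationalising (\ref{2.3h}) shows that $M_n$ satisfies exactly this equation with $E_n\equiv 0$; indeed the Meixner parameters in (\ref{2.3****}) were chosen so that the moment matching produces the same $a_n,\alpha_n$ up to $O(1/n)$. Subtracting the two equations and applying the mean value theorem yields $(T_n-M_n)\Psi'(\xi_n) = -E_n$, where $\Psi(w)=w+1/w+a_n/w^2+\alpha_n/w^3$ and $\xi_n$ lies between $T_n(z)$ and $M_n(z)$. The derivative $\Psi'(w)=1-1/w^2+O(1/\sqrt n)$ vanishes at the preimages of the Meixner edges $a_n\pm 2/e_n$; a direct computation from (\ref{2.3h}) gives $|\Psi'(M_n(z))|\ge c\,|\sqrt{(e_n(z-a_n))^2-4}|$ uniformly on $D_n$, and a bootstrap transfers the same lower bound to $\xi_n$. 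Dividing through gives (\ref{th4.1}).

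For (\ref{th4.1*}), I extend the analysis across the edge using the continuity of $T_n$ up to $\mathbb R$ from Lemma~\ref{l7.4}: in the transition zone $2/e_n-\varepsilon_{n1}/n<|x-a_n|\le 2/e_n$ the Meixner satisfies $|\Im M_n(x)|\le c\sqrt{|(e_n(x-a_n))^2-4|}$, while outside the spectrum $\Im M_n=0$; combined with the perturbation bound $|T_n-M_n|\le c\sqrt{\varepsilon_{n1}/n}$ that (\ref{th4.1}) delivers at the boundary of $D_n$, this gives $\Im T_n(x)\le c(\mu)\sqrt{\varepsilon_{n1}/n}$ throughout. For (\ref{th4.2}), the identity $F_{\mu_n^*}(z)=F_{\mu^*}(\sqrt n T_n(z))/\sqrt n$ gives $G_{\mu_n^*}(z)=\sqrt n\,G_{\mu^*}(\sqrt n T_n(z))$. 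Since $|\sqrt n T_n(z)|\ge c\sqrt n$, I expand $G_{\mu^*}$ as a moment series; using $m_1(\mu^*)=0$ together with the uniform bound $m_k(\mu^*)\le c(\mu)$ for $k\le 4$ (from the support bound for $\mu^*$ and (\ref{Pas5.1})--(\ref{Pas5.1b})), the series truncates to the claimed form with an $O(1/n^{3/2})$ remainder.

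The main obstacle is the stability step near the spectral edge, where $\Psi'(M_n)$ degenerates: the inverse-function argument must be executed carefully so that the $O(\varepsilon_{n1}/n)$ error in $E_n$ does not overwhelm the vanishing derivative. This is precisely why $z$ is confined to the rectangle $D_n$, whose real part stays at distance at least $\varepsilon_{n1}/n$ from the Meixner edge, so that $|\sqrt{(e_n(z-a_n))^2-4}|$ is at least of order $\sqrt{\varepsilon_{n1}/n}$, comfortably dominating the error. The full quantitative execution---including the bootstrap giving the lower bound for $\Psi'(\xi_n)$ and the edge-crossing argument for (\ref{th4.1*})---is the one carried out in Subsections~7.2--7.7 of~\cite{ChG:2013}.
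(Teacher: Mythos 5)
Your overall strategy (turn the subordination equation for $T_n$ into a truncated ``$z=\Psi(T_n)+E_n$'' relation and compare with $M_n$ by a stability argument) is reasonable in spirit, but the step on which the whole theorem hinges is wrong as written. You claim that rationalising (\ref{2.3h}) shows $M_n$ satisfies \emph{exactly} $z=w+\frac1w+\frac{a_n}{w^2}+\frac{\alpha_n}{w^3}$. It does not: $M_n$ satisfies the quadratic $(M_n-a_n)^2-(1+b_n)(z-a_n)(M_n-a_n)+b_n(z-a_n)^2+(1-d_n)=0$, and dividing your quartic by this quadratic leaves a remainder whose linear-in-$M$ coefficient contains the term $(b_n-d_n)(z-a_n)=-\frac{z-a_n}{n}$ when the $1/w$-coefficient is taken to be $1$. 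So the defect of $M_n$ in your truncated equation is of order $1/n$, which is exactly what you cannot afford: the theorem asserts an error $O(\varepsilon_{n1}/n)=o(1/n)$, and near the edge the factor $((e_n(z-a_n))^2-4)^{-1/2}$ is as large as $(n/\varepsilon_{n1})^{1/2}$, so an $O(1/n)$ defect would give only $|T_n-M_n|\lesssim n^{-1}(n/\varepsilon_{n1})^{1/2}$, far weaker than (\ref{th4.1}). Your own hedge ``up to $O(1/n)$'' concedes precisely the problematic order. The correct coefficient of $1/T_n$ coming from the subordination equation is $\frac{n-1}{n}m_2(\mu^*)$, not $1$, and the parameters $a_n,b_n,d_n$ of (\ref{2.3****}) (note $d_n-b_n=1/n$ exactly) are chosen so that the $1/n$-order terms are matched \emph{exactly}; your sketch never carries out this matching, whereas the paper avoids the issue altogether: it converts the subordination relation into a quintic $Q(z,w)=0$, factors it into a quadratic times a cubic with three roots of size $O(n^{-1/2})$, identifies $T_n$ with the explicit quadratic-formula root $w_4$, and compares the two explicit square-root expressions for $w_4$ and $M_n$, with perturbations $r_{n1},r_{n2}=O(\eta(n;\tau)/n)$ appearing additively and inside the square root; the edge denominator then comes out of rationalising the difference of the square roots, with no mean-value or stability step at all.

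Two further gaps. First, your mean value theorem step $(T_n-M_n)\Psi'(\xi_n)=-E_n$ is not valid for complex-valued analytic $\Psi$; you would have to use $\Psi(T_n)-\Psi(M_n)=(T_n-M_n)\int_0^1\Psi'(M_n+s(T_n-M_n))\,ds$ and then prove a lower bound for the averaged derivative near the edge (where $\Psi'$ degenerates), which is the delicate point your ``bootstrap'' only names. Second, your argument for (\ref{th4.1*}) does not work: the bound (\ref{th4.1}) is available only on $D_n$, i.e.\ for $|\Re z-a_n|\le 2/e_n-\varepsilon_{n1}/n$, while (\ref{th4.1*}) concerns $2/e_n-\varepsilon_{n1}/n<|x-a_n|\le 3$; continuity of $T_n$ up to $\mathbb R$ plus a bound at the boundary of $D_n$ gives no quantitative control of $\Im T_n(x)$ on that whole outer range. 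The paper handles this by noting that on the larger rectangle $D^*=\{|\Re z|\le 4,\,0<\Im z\le 3\}$ the function $T_n(x)$ coincides with one of the explicit roots $w_3,w_4$ of (\ref{5.11}), from which the bound $\Im T_n(x)\le c(\mu)\sqrt{\varepsilon_{n1}/n}$ is read off directly. Your treatment of (\ref{th4.2}) does coincide with the paper's (expansion of $G_{\mu^*}(\sqrt n\,T_n(z))$ using $m_1(\mu^*)=0$, bounded moments, and $|T_n(z)|\ge c$), and that part is fine.
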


In (\ref{th4.1}) and (\ref{th4.2}) $\theta$ is a complex-valued quantities such that $|\theta|\le 1$.

Here and in the sequel constants $c(\mu)>0$ do not depend on the constant $c_1(\mu)$.

In Section~7 of this paper we shall give a more detailed exposition of the proof of this theorem.

By Lemmas~\ref{l7.4},~\ref{l7.5},
$|T_n(z)|\ge 1.03/3$ for $z\in \mathbb C^+\cup \mathbb R$ and for $n\ge n_1(\mu)$. It is obvious that
the same estimate holds for $M_n(z)$. Since 
\begin{equation}\label{th4.2^*}
G_{\mu_n^*}(z)=\sqrt {n} G_{\mu^*}(\sqrt {n} T_n(z))=\int\limits_{[-\sqrt{n-1}/3,\sqrt{n-1}/3]}
\frac{\mu^*(du)}{T_n(z)-u/\sqrt {n}},\qquad z\in\mathbb C^+, 
\end{equation}
we conclude that $G_{\mu_n^*}(z)$ is a continuous function up to the real axis. Denote its value for real $x$  
by $G_{\mu_n^*}(x)$.
Denote $G_{\hat{\mu}_n}(z):=1/T_n(z),\,z\in\mathbb C^+$. This function is continuous up to the real axis
as well. Therefore
$\hat{\mu}_n$ and $\mu_n^*$ are absolutely continuous measures with continuous densities
$\hat{p}_n(x)$ and $p_n^*(x)$, respectively,
\begin{align}
&\hat{p}_n(x)=-\lim_{\varepsilon\downarrow 0}\frac 1{\pi}\Im \frac 1{T_n(x+i\varepsilon)} 
=-\frac 1{\pi}\Im \frac 1{T_n(x)},\notag\\
&p_n^*(x)=-\lim_{\varepsilon\downarrow 0}\frac 1{\pi}\Im G_{\mu_n^*}(x+i\varepsilon)
=-\frac 1{\pi}\Im G_{\mu_n^*}(x).\notag
\end{align}

In addition, $\hat{p}_n(x)\le 1$ and $p_n^*(x)\le 50$ for all $x\in \mathbb R$ and $n\ge n_1(\mu)$.  
\begin{theorem}\label{th5}
Let $\mu\in\mathcal M$ such that $m_4(\mu)<\infty$ and $m_1(\mu)=0,\,m_2(\mu)=1$.
Then, for $x\in I_n:=\{x\in\mathbb R:|x-a_n|\le\frac 2{e_n}-\frac{\varepsilon_{n1}}n\}$ and $n\ge n_1(\mu)$, 
the following relation holds
\begin{align}\label{loc.6}  
p_n^*(x)=v_n(x-a_n)
+\frac{\varepsilon_{n1}}n\,\frac {c(\mu)\,\theta}{\sqrt{4-(e_n(x-a_n))^2}}, 
\end{align}
where $v_n(x)$ is defined in $(\ref{vden})$. 
\end{theorem}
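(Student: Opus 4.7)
\medskip

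\noindent\textbf{Proof strategy.}
The plan is to read off $p_n^*(x)$ from the Stieltjes--Perron inversion formula $p_n^*(x)=-\pi^{-1}\Im G_{\mu_n^*}(x)$ and then to insert the expansion~(\ref{th4.2}) of Theorem~\ref{th4a}, which gives
\begin{equation*}
p_n^*(x)=-\frac 1\pi\,\Im\frac 1{T_n(x)}-\frac{m_2(\mu^*)}{\pi n}\,\Im\frac 1{T_n(x)^3}+O(n^{-3/2}).
\end{equation*}
The task then splits into replacing $T_n$ by the free Meixner transform $M_n$ by means of~(\ref{th4.1}) and then explicitly computing and Taylor expanding the two resulting main terms. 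For $x\in I_n$ the radicand $(e_n(x-a_n))^2-4$ is negative, so~(\ref{th4.1}) yields $|T_n(x)-M_n(x)|\le c(\mu)\varepsilon_{n1}/(n\sqrt{4-(e_n(x-a_n))^2})$. Since $|T_n(x)|$ and $|M_n(x)|$ are both bounded below by a positive constant depending only on $\mu$ on the real axis, the same bound survives after passage to $1/T_n$, $1/T_n^3$ and to $1/M_n$, $1/M_n^3$; the resulting error is precisely the one allowed in~(\ref{loc.6}).

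Next I would compute $-\pi^{-1}\Im M_n(x)^{-1}$ directly. Writing $y=x-a_n$ and using the definition~(\ref{2.3h}) of $M_n$, one finds $M_n(x)=a_n+\tfrac12(1+b_n)y+\tfrac i2\sqrt{4(1-d_n)-(1-b_n)^2y^2}$, hence $|M_n(x)|^2=b_ny^2+a_n(1+b_n)y+a_n^2+1-d_n$, which is exactly the denominator $f(x)$ appearing in~(\ref{2.3g}). Consequently $-\pi^{-1}\Im M_n(x)^{-1}$ equals the density~(\ref{2.3g}) of the free Meixner measure $\mu_{a_n,b_n,d_n}$ at $x$. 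Extracting $p_w(e_ny)$ via $\sqrt{4(1-d_n)-(1-b_n)^2y^2}=\sqrt{1-d_n}\sqrt{4-(e_ny)^2}$ and then Taylor expanding the rational factor $\sqrt{1-d_n}/f(x)=\sqrt{1-d_n}/\bigl(1+[a_n^2-d_n+a_n(1+b_n)y+b_ny^2]\bigr)$ to second order (the only relevant second-order contribution being $a_n^2y^2$, all other squared terms being $O(n^{-3/2})$), one obtains
\begin{equation*}
-\tfrac 1\pi\,\Im\tfrac 1{M_n(x)}=\bigl(1+\tfrac{d_n}{2}-a_n^2-a_ny-b_ny^2+a_n^2y^2\bigr)p_w(e_ny)+O\bigl(n^{-3/2}\bigr)p_w(e_ny).
\end{equation*}
Comparing with the definition~(\ref{vden}) of $v_n(y)$, the discrepancy is exactly $(y^2-1)p_w(e_ny)/n$.

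This missing term is supplied by the cubic correction $-\pi^{-1}m_2(\mu^*)n^{-1}\Im M_n(x)^{-3}$. Using $m_2(\mu^*)=1+O(\eta(n;\tau)/n)$ from~(\ref{Pas5.1}) together with the standard semicircular identity $-\pi^{-1}\Im G_w(y)^3=(y^2-1)p_w(y)$ (derivable by cubing $G_w(y)=(y-i\sqrt{4-y^2})/2$), and the fact that $M_n$ is an $O(n^{-1/2})$ perturbation of the semicircular reciprocal Cauchy transform, one obtains $-\pi^{-1}n^{-1}\Im M_n(x)^{-3}=((e_ny)^2-1)p_w(e_ny)/n+O(n^{-3/2})$. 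Since $(e_ny)^2-y^2=O(y^2/n)$ and the corrections coming from $\eta(n;\tau)/n$ in $m_2(\mu^*)$ are of smaller order, this equals $(y^2-1)p_w(e_ny)/n$ modulo admissible error. Summing the two contributions reproduces $v_n(y)$ as claimed. The main obstacle is the bookkeeping of error terms uniformly in $y$ near the edge $|e_ny|\to 2$, where $1/\sqrt{4-(e_ny)^2}$ diverges: every Taylor remainder and every $T_n\mapsto M_n$ substitution error must be packaged with the correct $\sqrt{4-(e_ny)^2}$ denominator so as to fit the single bound stated in~(\ref{loc.6}). All other, bulk-type errors are of smaller order $O(n^{-3/2})$ and are absorbed for free.
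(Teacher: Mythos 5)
Your proposal is correct and follows essentially the same route as the paper: both reduce $p_n^*$ via \eqref{th4.2} to $-\pi^{-1}\Im\bigl(T_n^{-1}+m_2(\mu^*)(nT_n^3)^{-1}\bigr)+O(n^{-3/2})$, replace $T_n$ by $M_n$ using \eqref{th4.1} together with the uniform lower bounds on $|T_n|$ and $|M_n|$, identify $-\pi^{-1}\Im M_n^{-1}$ with the free Meixner density \eqref{2.3g} and Taylor expand it, and recover the missing $((x-a_n)^2-1)p_w(e_n(x-a_n))/n$ from the cubic correction term. The error bookkeeping with the $\sqrt{4-(e_n(x-a_n))^2}$ denominator matches the paper's estimates \eqref{loc.2}--\eqref{loc.4a}.
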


\begin{proof}
We shall use the following estimate, for $x\in\mathbb R$,   
\begin{align}\label{loc.1}
|p_n^*(x)-p_{\mu_{a_n,b_n,d_n}}(x)-\frac 1n q_{n}(x)|&\le |\hat{p}_n(x)-p_{\mu_{a_n,b_n,d_n}}x)|
+|p_n^*(x)-\hat{p}_n(x)-\frac 1n \hat{q}_{n}(x)|\notag\\
&+\frac 1n|q_{n}(x)-\hat{q}_n(x)|, 
\end{align}
where 
\begin{equation}\notag
q_{n}(x):=-\frac 1{\pi}\Im \frac 1{M_n(x)^3},\quad\text{and}\quad
\hat{q}_{n}(x):=-\frac 1{\pi}\Im \frac 1{T_n(x)^3}.
\end{equation}
By (\ref{th4.1}) and by the lower bounds
\begin{equation}\label{loc.1*}
 |T_n(x)|\ge 1.03/3,\qquad |M_n(x)|\ge 1.03,\quad x\in\mathbb R,
\end{equation}
we easily obtain the upper bound, for $x\in I_n$
and $n\ge n_1(\mu)$,
\begin{equation}\label{loc.2}
|\hat{p}_n(x)-p_{\mu_{a_n,b_n,d_n}}x)|\le \frac{\varepsilon_{n1}}n\,\frac {c(\mu)}{\sqrt{4-(e_n(x-a_n))^2}} 
\end{equation}
and, by (\ref{th4.2}), we have
\begin{equation}\label{loc.3}
|p_n^*(x)-\hat{p}_n(x)-\frac 1n \hat{q}_{n}(x)|\le 
\frac{c(\mu)}{n^{3/2}},\quad x\in I_n,\quad n\ge n_1(\mu).
\end{equation}

Since, by (\ref{2.3g}),
\begin{equation}
p_{\mu_{a_n,b_n,d_n}}(x):=\frac {\sqrt{4(1-d_n)-(1-b_n)^2(x-a_n)^2}}{2\pi(b_nx^2+a_n(1-b_n)x+1-d_n)},
\quad x\in \tilde{I}_n:=[a_n-2/e_n,a_n+2/e_n], 
\end{equation}
we easily conclude that
\begin{equation}\label{loc.4}
p_{\mu_{a_n,b_n,d_n}}(x)=
\Big(1+\frac{d_n}2-a_n^2-a_n(x-a_n)-(b_n-a_n^2)(x-a_n)^2\Big)p_w(e_n(x-a_n))
+\frac{c(\mu)\theta}{n^{3/2}} 
\end{equation}
for $x\in \tilde{I}_n$. Using again (\ref{th4.1}) and (\ref{loc.1*}),
we obtain
\begin{align}\label{loc.4a}
|q_n(x)-\hat{q}_n(x)|&\le \frac 1{\pi}|T_n(x)-M_n(x)|\Big(\frac 1{|M_n(x)T_n(x)^3|}+
\frac 1{|M_n(x)^2T_n(x)^2|}+\frac 1{|M_n(x)^3T_n(x)|}\Big)
\notag\\
&\le \frac{\varepsilon_{n1}}n\,\frac { c(\mu)}{\sqrt{4-(e_n(x-a_n))^2}},\quad x\in I_n.   
\end{align}
On the other hand it is not difficult to show that
\begin{align}
q_{n}(x)&:=\frac 1{8\pi}
\sqrt{(4(1-d_{n})-(1-b_n)^2(x-a_n)^2)_+}\notag\\
&\times\frac{3((1+b_n)x+(1-b_n)a_n)^2+(1-b_n)^2(x-a_n)^2-
4(1-d_n)}{(b_nx^2+(1-b_n)a_n x+1-d_{n})^3},\quad x\in\mathbb R,\notag
\end{align}
which leads to the relation
\begin{equation}\label{loc.5}
q_{n}(x)=((x-a_n)^2-1)p_w(e_n(x-a_n))
+c(\mu)\theta (|a_n|+n^{-1})  
\end{equation}
for $x\in \tilde{I}_n$.

Applying (\ref{loc.2}), (\ref{loc.3}), (\ref{loc.4a}) and (\ref{loc.4}), (\ref{loc.5}) to (\ref{loc.1}) 
we arrive at the statement of the theorem.
\end{proof}

\section{Local asymptotic expansion}

First we prove 
the auxiliary result.
\begin{theorem}\label{th4c}
Let $\mu\in\mathcal M$ such that $m_4(\mu)<\infty$ and $m_1(\mu)=0,\,m_2(\mu)=1$.
Then the following relation holds
\begin{equation}\label{th4.4*}
p_{n}(x)=p_{n}^*(x)+\tilde{\rho}_{n1}(x)+\tilde{\rho}_{n2}(x),\quad x\in \mathbb R,\,\,
n\ge n_1(\mu),
\end{equation} 
where
\begin{equation}\notag
|\tilde{\rho}_{n1}(x)|\le c(\mu)\big(|\Im (S_n(x)-T_n(x))|+ \Im T_n(x)|S_n(x)-T_n(x)|+n^{-2}\big)
\end{equation}
and $\tilde{\rho}_{n2}(x)$ is a continuous function such that 
$$
0\le \tilde{\rho}_{n2}(x)\le c(\mu)\quad\text{ and}\quad \int_{\mathbb R}\tilde{\rho}_{n2}(x)\,dx=o\Big(\frac 1{n^2}\Big).
$$
\end{theorem}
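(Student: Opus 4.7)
The strategy is to compare $G_{\mu_n}$ and $G_{\mu_n^*}$ through the subordination functions $S_n$ and $T_n$, take the imaginary part carefully on the real axis to recover the $(|\Im(S_n - T_n)|,\Im T_n\,|S_n - T_n|)$-bound in the bulk, and then isolate a nonnegative $\tilde{\rho}_{n2}$ absorbing the tail discrepancy.

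First I would substitute $z \mapsto \sqrt{n}\,w$ in the subordination equation~(\ref{3.10}) of Proposition~\ref{3.3pro} and use the definition of $S_n$ together with $F_{\mu_n}(w) = F_\mu(\sqrt{n}\,S_n(w))/\sqrt{n}$ to rewrite it as
\begin{equation*}
w = nS_n(w) - (n-1)F_{\mu_n}(w), \qquad w\in \mathbb{C}^+,
\end{equation*}
with the analogue with $T_n$, $F_{\mu_n^*}$ in place of $S_n$, $F_{\mu_n}$. Subtracting yields the clean identity $F_{\mu_n}(w) - F_{\mu_n^*}(w) = \tfrac{n}{n-1}(S_n(w) - T_n(w))$, whence
\begin{equation*}
G_{\mu_n}(w) - G_{\mu_n^*}(w) = -\frac{n}{n-1}\cdot\frac{S_n(w) - T_n(w)}{F_{\mu_n}(w)\,F_{\mu_n^*}(w)},
\end{equation*}
an identity that extends continuously to $w = x \in \mathbb{R}$ by Lemma~\ref{l7.4} and the discussion in Section~5.

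Next, letting $w = x \in \mathbb{R}$ and writing $S_n(x) - T_n(x) = \Delta u + i\Delta v$, $F_{\mu_n}(x) = a_1 + ib_1$, $F_{\mu_n^*}(x) = a_2 + ib_2$ with $b_j\ge 0$, a direct expansion gives
\begin{equation*}
\Im\frac{S_n - T_n}{F_{\mu_n}F_{\mu_n^*}} = \frac{\Delta v(a_1 a_2 - b_1 b_2) - \Delta u(a_1 b_2 + a_2 b_1)}{|F_{\mu_n}|^2|F_{\mu_n^*}|^2}.
\end{equation*}
From the identity of the previous step, $b_1 = \tfrac{n}{n-1}\Im S_n$ and $b_2 = \tfrac{n}{n-1}\Im T_n$; substituting $b_1 = \tfrac{n}{n-1}(\Im T_n + \Delta v)$ splits the cross term as $a_1 b_2 + a_2 b_1 = \tfrac{n}{n-1}[(a_1+a_2)\Im T_n + a_2\Delta v]$, exhibiting the required factor $\Im T_n$ in front of $|\Delta u| \le |S_n - T_n|$ and a lower-order cross term of size $|\Delta u||\Delta v|$. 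On a compact region $|x| \le R_0(\mu)$, Lemma~\ref{l7.5} together with $(n-1)F_{\mu_n} = nS_n - x$ provides uniform upper and lower bounds on $|F_{\mu_n}|, |F_{\mu_n^*}|$ and bounds on $|a_1|, |a_2|$, producing the $\tilde{\rho}_{n1}$-type estimate $c(\mu)(|\Im(S_n - T_n)| + \Im T_n\cdot|S_n - T_n|)$; the additive $n^{-2}$ in the bound absorbs the $O(n^{-3/2})$ residuals coming from Theorem~\ref{th4a}.

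The main obstacle will be the construction of the nonnegative continuous $\tilde{\rho}_{n2}$ with $\int_{\mathbb R}\tilde{\rho}_{n2}(x)\,dx = o(1/n^{2})$, absorbing the discrepancy $p_n - p_n^*$ outside the bulk. Since $\mu^*$ has support in $[-\tfrac 13\sqrt{n-1}, \tfrac 13\sqrt{n-1}]$, the measure $\mu_n^*$ has support in a compact interval close to $[-2,2]$, so for $|x|$ outside this region $p_n^*(x) = 0$ while $p_n(x)\ge 0$ contributes a nonnegative residual. I would invoke Lemma~\ref{l7.6} to rewrite both $p_n$ and $p_n^*$ via free convolutions with the semicircle $w_t$, use Lemma~\ref{l7.7} together with the Lipschitz regularity of free-semicircular convolutions to separate a smooth principal part (whose error is encoded in $\tilde{\rho}_{n1}$) from a nonnegative residual, and finally exploit the moment comparisons (\ref{Pas5.1})--(\ref{Pas5.1b}) together with the smallness of $\eta(n;\tau)$ from~(\ref{Pas5.0}) to obtain the sharp $o(1/n^2)$ bound on the $L^1$ norm of $\tilde{\rho}_{n2}$. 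The algebraic steps above are essentially direct, so the delicate point is precisely this global construction and sharp $L^1$ estimation of $\tilde{\rho}_{n2}$.
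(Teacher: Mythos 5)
Your first two steps are correct and constitute a genuinely different route from the paper's: subtracting the two rescaled subordination equations $x=nS_n(x)-(n-1)F_{\mu_n}(x)$ and $x=nT_n(x)-(n-1)F_{\mu_n^*}(x)$ does give the exact identity $F_{\mu_n}-F_{\mu_n^*}=\frac n{n-1}(S_n-T_n)$, hence $G_{\mu_n}-G_{\mu_n^*}=-\frac n{n-1}(S_n-T_n)/(F_{\mu_n}F_{\mu_n^*})$, and with $b_1=\frac n{n-1}\Im S_n$, $b_2=\frac n{n-1}\Im T_n$ your splitting of the imaginary part does produce precisely the two terms $|\Im(S_n-T_n)|$ and $\Im T_n\,|S_n-T_n|$. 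The paper never subtracts the subordination equations; it instead compares the integral representations $G_{\mu_n}(z)=\int\mu(du)/(S_n(z)-u/\sqrt n)$ and $G_{\mu_n^*}(z)=\int\mu^*(du)/(T_n(z)-u/\sqrt n)$ integrand by integrand, after first truncating $\mu$.

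The genuine gap is your construction of $\tilde{\rho}_{n2}$. Because you only establish the $\tilde{\rho}_{n1}$-bound on a compact set $|x|\le R_0(\mu)$, everything outside it, i.e.\ essentially $p_n(x)$ itself, must go into $\tilde{\rho}_{n2}$, and you would then need $\int_{|x|>R_0}p_n(x)\,dx=o(n^{-2})$. Under a bare fourth-moment assumption this is not available and is false in general: the mass of $\mu_n$ outside a fixed compact set is controlled only at the level $o(1/n)$ (this is exactly what (\ref{asden2}) and Remark~\ref{rem8.11} give), and neither Lemmas~\ref{l7.6}, \ref{l7.7} nor the moment comparisons (\ref{Pas5.1})--(\ref{Pas5.1b}) upgrade it to $o(n^{-2})$. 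The paper obtains the $o(n^{-2})$ residual by a different decomposition: it truncates the \emph{base} measure, writing $G_{\mu_n}(z)$ as the sum of the integrals over $|u|\le\sqrt{n-1}/3$ and $|u|>\sqrt{n-1}/3$ inside the subordination representation; the second piece is the Cauchy transform of a nonnegative sub-density $p_{n2}=\tilde{\rho}_{n2}$ whose total mass equals $\mu(\{|u|>\sqrt{n-1}/3\})=o(n^{-2})$ by Chebyshev with $m_4(\mu)<\infty$, while comparison of the truncated piece with $p_n^*$ yields the additive $c(\mu)n^{-2}$ from the moment differences between $\mu$ and $\mu^*$ (your remark that the $n^{-2}$ term ``absorbs $O(n^{-3/2})$ residuals'' is also wrong, since $n^{-3/2}$ is larger than $n^{-2}$; no such residuals are needed). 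Note finally that within your own scheme the compact restriction is unnecessary: $|F_{\mu_n}(x)|$ and $|F_{\mu_n^*}(x)|$ are bounded below uniformly on all of $\mathbb R$ (Lemma~\ref{l7.5} for moderate $|x|$, Lemmas~\ref{l7.6}--\ref{l7.7} for large $|x|$), so your pointwise estimate extends globally and would even allow $\tilde{\rho}_{n2}\equiv 0$; but as written, the passage from the compact region to all of $\mathbb R$ is exactly the step that fails.
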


\begin{proof}
Represent the density $p_{n}(x)$ of the measure $\mu_n$ in the form
\begin{equation}\label{th4.4}
p_{n}(x)=p_{n1}(x)+p_{n2}(x),\quad x\in\mathbb R, 
\end{equation}
where $p_{nj}(x)\ge 0,\,x\in\mathbb R,\,j=1,2$, and, for $z\in\mathbb C^+$,
\begin{align}
I_1(z)&:=\int_{|u|\le \sqrt{n-1}/3}\frac{\mu(du)}{S_n(z)-u/\sqrt n}=\int_{\mathbb R}\frac{p_{n1}(u)\,du}{z-u},\notag\\
I_2(z)&:=\int_{|u|>\sqrt{n-1}/3}\frac{\mu(du)}{S_n(z)-u/\sqrt n}=\int_{\mathbb R}\frac{p_{n2}(u)\,du}{z-u}.\notag
\end{align} 
Since $\lim_{y\to+\infty}iyI_2(iy)=\int_{|u|>\sqrt{n-1}/3}\mu(du)=\int_{\mathbb R}p_{n2}(u)\,du$, we note that
\begin{equation}\label{th4.5}
\int_{\mathbb R}p_{n2}(u)\,du=o(n^{-2}). 
\end{equation}
Since $S_n(x),\,x\in\mathbb R$, is a continuous function and $|S_n(x)|\ge 1.03/3$
for all $x\in\mathbb R$, 
we easily see that, for $x\in\mathbb R$,
\begin{align}\notag
p_{n1}(x)=-\frac 1{\pi}\Im \int_{|u|\le \sqrt{n-1}/3}\frac{\mu(du)}{S_n(x)-u/\sqrt n} 
\end{align}
and that $p_{n1}(x)$ is a continuous function on the real line. In view of (\ref{asden3}), 
$p_{n}(x)$ is a continuous function on the real line and
$p_n(x)\le c(\mu),\,x\in\mathbb R$, 
for $n\ge n_1(\mu)$. Therefore we conclude from (\ref{th4.4}) that $p_{n2}(x)$ is a continuous function 
on the real line and $p_{n2}(x)\le c(\mu),\,x\in\mathbb R$, for the same $n$.

Now we may write
\begin{align}
\pi(p_{n}^*(x)-p_{n1}(x))&=I_{3,1}(x)+I_{3,2}(x)\notag\\
&:=\Im\Big(\int_{|u|\le \sqrt{n-1}/3}\frac{\mu(du)}{S_n(x)-u/\sqrt n}-
\int_{|u|\le \sqrt{n-1}/3}\frac{\mu(du)}{T_n(x)-u/\sqrt n}\Big)\notag\\
&+\Im\int_{|u|\le \sqrt{n-1}/3}\frac{(\mu-\mu^*)(du)}{T_n(x)-u/\sqrt n}. \label{th4.6^*}
\end{align}

Since 
\begin{align}
\frac {\Im S_n(x)}{|S_n(x)-u/\sqrt n|^2}&-\frac {\Im T_n(x)}{|T_n(x)-u/\sqrt n|^2}
=\frac{\Im S_n(x)-\Im T_n(x)}{|S_n(x)-u/\sqrt n|^2}\notag\\
&+\Im T_n(x)\Big(\frac 1{|S_n(x)-u/\sqrt n|^2}
-\frac 1{|T_n(x)-u/\sqrt n|^2}\Big),\notag
\end{align}
we have
\begin{align}
&I_{3,1}(x)=(\Im S_n(x)-\Im T_n(x))\int\limits_{|u|\le \sqrt{n-1}/3}\frac{\mu(du)}{|S_n(x)-u/\sqrt n|^2}\notag\\
&+\Im T_n(x)\Re(T_n(x)-S_n(x))\int\limits_{|u|\le \sqrt{n-1}/3}\frac{(\Re S_n(x)+\Re T_n(x)-2u/\sqrt n)\,\mu(du)}
{|S_n(x)-u/\sqrt n|^2|T_n(x)-u/\sqrt n|^2}\notag\\
&+\Im T_n(x)\Im(T_n(x)-S_n(x))\int\limits_{|u|\le \sqrt{n-1}/3}\frac{(\Im S_n(x)+\Im T_n(x)))\,\mu(du)}
{|S_n(x)-u/\sqrt n|^2|T_n(x)-u/\sqrt n|^2}.\notag
\end{align}

By the inequalities $|S_n(x)-u/\sqrt n|\ge 0.01$, $|T_n(x)-u/\sqrt n|\ge 0.01$
for $x\in\mathbb R$ and $|u|\le\sqrt{n-1}/3$, we conclude that, for $x\in \mathbb R$, 
\begin{equation}\label{th4.6a}
|I_{3,1}(x)|\le c(\mu)|\Im S_n(x)-\Im T_n(x)|+c(\mu)\Im T_n(x)|S_n(x)-T_n(x)|.
\end{equation}

On the other hand we note that, for $x\in\mathbb R$,
\begin{align}
I_{3,2}(x)&=\Im\Big(\frac{m_2(\mu)-m_2(\mu^*)}{T_n^3(x)n}+\frac{m_3(\mu)-m_3(\mu^*)}{T_n^4(x)n^{3/2}}\notag\\
&-\sum_{j=0}^3\frac 1{T_n(x)^{j+1}n^{j/2}}\int_{|u|>\sqrt{n-1}/3}u^j\,\mu(du)
+\frac 1{T_n^4(x)n^{2}}\int\limits_{|u|\le \sqrt{n-1}/3}\frac{u^4(\mu-\mu^*)(du)}{T_n(x)-u/\sqrt n}\Big). \notag
\end{align}
Using (\ref{Pas5.1})--(\ref{Pas5.1b}) and the inequality
\begin{equation}\notag
\int_{|u|>\sqrt{n-1}/3}|u|^j\,\mu(du)\le c(\mu)n^{-(4-j)/2},\quad j=0,1,2,3,4, 
\end{equation}
we obtain the estimate
\begin{equation}\label{th4.7^*}
|I_{3,2}(x)|\le \frac{c(\mu)}{n^{2}},\quad x\in\mathbb R. 
\end{equation}
Applying (\ref{th4.6a}) and (\ref{th4.7^*}) to (\ref{th4.6^*}), we have, for $x\in \mathbb R$, 
\begin{align}\label{th4.8}
|p_{n1}(x)-p_{n}^*(x)|\le c(\mu)\big(|\Im S_n(x)-\Im T_n(x)|+\Im T_n(x)|S_n(x)-T_n(x)|+n^{-2}\big).
\end{align}
The representation (\ref{th4.4*}) follows immediately from (\ref{th4.4}) if to define $\tilde{\rho}_{n1}(x)=p_{n1}(x)-p^*_{n}(x)$ and
$\tilde{\rho}_{n2}(x)=p_{n2}(x)$ and from the bounds (\ref{th4.8}) and (\ref{th4.5}). 
\end{proof}

\section{Proof of Theorem~\ref{th4a}}

In this section we show how the arguments of Section 7 in \cite{ChG:2013}
lead to a~proof of Theorem~\ref{th4a}. 

Repeating the arguments of Subsection~7.2 we deduce that $T_n(z)$ satisfies the functional equation,
for $z\in\mathbb C^+$,
\begin{equation}\label{5.5}
T_n^5(z)-zT_n^4(z)+m_2(\mu^*)T_n^3(z)
+\frac{\zeta_{n2}(z)}{\sqrt n}T_n^2(z)
+\frac{\zeta_{n3}(z)}n T_n(z)-\frac{\zeta_{n4}(z)z}{n^2} =0,
\end{equation}
where 
$
\zeta_{n1}(z):=\int_{\mathbb R}\frac{u^5\,\mu^*(du)}{W(\sqrt nz)-u},
$
$\zeta_{n2}(z):=m_3(\mu^*)-z/\sqrt n$, $\zeta_{n3}(z)(z):=m_4(\mu^*)+\zeta_{n1}(z)-zm_3(\mu^*)/\sqrt n$ and
$\zeta_{n4}(z)(z):=m_4(\mu^*)+\zeta_{n1}(z)$.
As in Subsection~7.3 from \cite{ChG:2013} we obtain estimates for the functions $\zeta_{nj}(z),j=1,2,3,4$, in the domain
$D^*:=\{|\Re z|\le 4,\,0<\Im z\le 3\}$
\begin{equation}\label{5.5a}
|\zeta_{n1}(z)|\le c(\mu)\delta_n\le c(\mu)\eta(n;\tau),\quad \sum_{j=2}^4 |\zeta_{nj}(z)|\le c(\mu).
\end{equation}

For every fixed $z\in\mathbb C^+$
consider the~equation
\begin{equation}\label{5.4c}
Q(z,w):=w^5-zw^4+m_2(\mu^*)w^3+\frac{\zeta_{n2}(z)}{\sqrt n} w^2
+\frac{\zeta_{n3}(z)}n w-\frac{\zeta_{n4}(z)z}{n^2} =0.
\end{equation}
Denote the~roots of the equation (\ref{5.4c}) by $w_j=w_j(z),\,j=1,\dots,5$.

As in Subsection~7.4 \cite{ChG:2013} we can show that for every fixed $z\in D^*$ 
the~equation $Q(z,w)=0$ 
has three roots, say $w_j=w_j(z),\,j=1,2,3$, such that
\begin{equation}\label{5.6}
|w_j|<r':=c_2(\mu) n^{-1/2},
\quad j=1,2,3,
\end{equation}
and two roots, say $w_j,\,j=4,5$, such that $|w_j|\ge r'$ for $j=4,5$.

Represent $Q(z,w)$ in the~form
$$
Q(z,w)=(w^2+s_1w+s_2)(w^3+g_1w^2+g_2w+g_3),
$$
where $w^3+g_1w^2+g_2w+g_3=(w-w_1)(w-w_2)(w-w_3)$.
From this formula we derive the~relations
\begin{align}\label{5.7}
s_1+g_1=-z,\quad &s_2+s_1g_1+g_2=m_2(\mu^*),\quad s_2g_1+s_1g_2+g_3=
\frac{\zeta_{n2}(z)}{\sqrt n},
\notag \\
&s_2g_2+s_1g_3=\frac {\zeta_{n3}(z)}n,\quad s_2g_3=-\frac{\zeta_{n4}(z)z}{n^2}.
\end{align}
By Vieta's formulae and (\ref{5.6}), note that
\begin{equation}\label{5.8a}
|g_1|\le 3r',\quad |g_2|\le 3(r')^2,\quad
|g_3|\le (r')^3.
\end{equation}
Now we obtain from (\ref{5.7}) and (\ref{5.8a}) the~following bounds,
for $z\in D^*$,
\begin{equation}\label{5.8}
|s_1|\le 5+3r',\quad |m_2(\mu^*)-s_2|\le 3r'(4r'+5)\le 16r'\le\frac 12.
\end{equation}
Then we conclude from 
(\ref {Pas5.1}), (\ref{5.5a}), (\ref{5.7})--(\ref{5.8}) that, for the~same $z$,
\begin{align}\label{5.9}
\Big|g_2-\frac{\zeta_{n4}(z)}{n}\Big| &\le \Big|g_2-\frac{\zeta_{n3}(z)}{n}\Big|+\frac{|m_3(\mu^*)||z|}{n^{3/2}}
\le \frac{|s_1|}{|s_2|}|g_3|+\frac{|s_2-m_2(\mu^*)|}{|s_2|}\frac{|\zeta_{n3}(z)|}n+(r')^3\notag\\
&\le 11(r')^3+8(r')^3+(r')^3=20(r')^3\le c(\mu)n^{-3/2}.
\end{align}
Now repeating the arguments of Subsection~7.4 we deduce the inequality
\begin{equation}\label{5.10}
 |g_1-a_n-b_n z|\le c(\mu)\frac{\eta(n;\tau)}n, \quad z\in D^*.
\end{equation}
To find the~roots $w_4$ and $w_5$, we need to solve the~equation
$w^2+s_1w+s_2=0$. Using (\ref{5.7}), we have, 
for $j=4,5$,
\begin{align}\label{5.11}
w_j&=\frac 12\Big(-s_1+(-1)^j\sqrt{s_1^2-4s_2}\Big)\notag\\&=\frac 12\Big(
z+g_1+(-1)^j\sqrt{(z+g_1)^2-4(m_2(\mu^*)+(z+g_1)g_1-g_2)}\Big)\notag\\
&=\frac 12\Big(z+g_1+(-1)^j\sqrt{(z-g_1)^2-4m_2(\mu^*)-4(g_1^2-g_2)}\Big)
=\frac 12 r_{n1}(z)+a_n+\notag\\
&+\frac 12\Big(\big(1+b_n\big)(z-a_n)
+(-1)^j\sqrt{\big(1-b_n\big)^2(z-a_n)^2-4\big(1-d_n\big)
+r_{n2}(z)}\Big),
\end{align}
where 
\begin{align}
r_{n1}(z)&:=g_1-a_n-b_n(z-a_n),\notag\\
r_{n2}(z)&:=
-3r_{n1}^2(z)-2r_{n1}(z)(4a_n+(1+3b_n)(z-a_n))+4(1-m_2(\mu^*))\notag\\
&+4(g_2-m_4(\mu^*)/n)-4b_n(z-a_n)(2a_n+b_n(z-a_n)).\notag 
\end{align}

The quantities $r_{n1}(z)$ and $r_{n2}(z)$ admit the bound (see Subsection~7.4 and 7.5 from \cite{ChG:2013})
\begin{equation}\label{5.12}
|r_{n1}(z)|+|r_{n2}(z)|\le c(\mu)\frac{\eta(n;\tau)}n,\quad z\in D^*.
\end{equation}
We choose the~branch of the~analytic square root according to the~condition $\Im w_4(i)\ge 0$.

As in Subsection~7.6 from \cite{ChG:2013} we prove that $w_4(z)=T_n(z)$ for $z\in D_n$, where 
the constant $c(\mu)$ in (\ref{5.12}) does not depend on the constant $c_1(\mu)$. 
Since the constant $c_1(\mu)$ is sufficiently large, we have, by (\ref{5.12}),
\begin{equation}\label{5.13}
|r_{n2}(z)|/|((1-b_n)^2(z-a_n)^2-4(1-d_n)|\le 10^{-2},\quad z\in D_n. 
\end{equation}

For $z\in D_n$, using formula (\ref{5.11}) with $j=4$ for $T_n(z)$, we write
\begin{align}\label{5.14}
&M_n(z)-T_n(z)=-\frac 12 r_{n1}(z)\notag\\
&-\frac 12\frac{r_{n2}(z)}{\sqrt{(1-b_n)^2(z-a_n)^2-4(1-d_n)}
+\sqrt{(1-b_n)^2(z-a_n)^2-4(1-d_n)+r_{n2}(z)}}. 
\end{align}
Using (\ref{5.13}) and the power expansion for the function $(1+z)^{1/2},\, |z|<1$, we easily
rewrite (\ref{5.14}) in the form
\begin{align}\label{5.15}
M_n(z)-T_n(z)=\frac{r_{n3}(z)}{\sqrt{(1-b_n)^2(z-a_n)^2-4(1-d_n)}},
\quad z\in D_n, 
\end{align}
where $|r_{n3}(z)|\le c(\mu)\eta(n;\tau)/n$. The relation (\ref{th4.1}) immediately follows from (\ref{5.15}). 

Using (\ref{th4.2^*}), we conclude that
\begin{equation}\label{5.2}
G_{\mu_n^*}(z)=\frac 1{T_n(z)}+\frac {m_2(\mu^*)}{nT_n^3(z)}
+\frac {1}{n^{3/2}T_n^3(z)}
\int_{[-\sqrt{n-1}/3,\sqrt{n-1}/3]}\frac{u^3\,\mu^*(du)}{T_n(z)-u/\sqrt n},\quad z\in\mathbb C^+.
\end{equation}
Since $|T_n(z)|\ge 1.03/3$ for $z\in \mathbb C^+$ and $n\ge n_1(\mu)$, we arrive at (\ref{th4.2}).

The function $T_n(x)$ for real $x$ such that $\frac 2{e_n}-\frac{\varepsilon_{n1}}n\le |x-a_n|\le 3$ coincide
with $w_3(x)$ or $w_4(x)$ from (\ref{5.11}). Here we understand $w_j(x)$ as limit values of $w_j(z)$ where $z\in D^*$
and $z\to x$. It is not difficult to conclude from the formula (\ref{5.11}) that $0\le \Im T_n(x)\le c(\mu)\sqrt{\frac{\varepsilon_{n1}}n}$
for $\frac 2{e_n}-\frac{\varepsilon_{n1}}n\le |x-a_n|\le 3$.

\section{Proof of Theorem~\ref{th7}}

Recalling the definition of the function $Z(z)$, we see, by Lemma~\ref{l7.4}, that the function $S_n(z)$ 
maps $\mathbb C^+$ conformally onto $\hat{D}_n$, where 
$\hat{D}_n:=\{z=x+iy,x,y\in\mathbb R:y>y_n(x\sqrt n)/\sqrt n\}$. Denote by $\hat{\gamma}_n$ a curve 
given by the equation $z=x+i\hat{y}_n(x)$, where $\hat{y}_n(x)=y_n(x\sqrt n)/\sqrt n$. The function $S_n(z)$ 
is continuous up to the real axis and it establishes a~homeomorphism between 
the real axis and the curve $\hat{\gamma}_n$.

Note as well that the function $T_n(z)$ maps $\mathbb C^+$ conformally onto $\hat{D}_n^*$,  
where $\hat{D}_n^*:=\{z=x+iy,x,y\in\mathbb R:y>y_n^*(x\sqrt n)/\sqrt n\}$. Here $y_n^*(x)$ 
is defined in the same way as $y_n(x)$ if we change the measure $\mu$ by $\mu^*$. 
By definition of the measure $\mu^*$, we see that $y_n^*(x)\le y_n(x)$. Hence 
$S_n(\mathbb C^+)\subseteq T_n(\mathbb C^+)$. Denote by $\hat{\gamma}_n^*$ a curve 
given by the equation $z=x+i\hat{y}_n^*(x),\,x\in\mathbb R$, where $\hat{y}_n^*(x)=y_n^*(x\sqrt n)/\sqrt n$. 
The function $T_n(z)$ 
is continuous up to the real axis and it establishes a~homeomorphism between 
the real axis and the~curve~$\hat{\gamma}_n^*$. 

Let $x\in\mathbb R$. Since $S_n(z)$ and $T_n(z)$ are the conformal maps $\mathbb C^{+}$ on $\hat{D}_n$ and
$\hat{D}_n^*$, respectively, which are continuous up to the real axis, we note that the functions $\Re S_n(x)$ and $\Re T_n(x)$
are monotonically increasing. Hence for every $x\in\mathbb R$ there exists unique $\tilde{x}\in\mathbb R$ such that
$\Re S_n(x)=\Re T_n(\tilde{x})$. Denote $h(x):=\Im S_n(x)-\Im T_n(\tilde{x})\ge 0$.

In order to prove Theorem~\ref{th7} we need the following auxiliary results. In the sequel
we assume that $n\ge n_1(\mu)\ge 10$.
\begin{proposition}\label{th4bpro1}
For $z_1,z_2\in T_n(\mathbb C^+)$,
\begin{equation}\notag
|T_n^{(-1)}(z_1)-T_n^{(-1)}(z_2)|\le c(\mu)|z_1-z_2|.
\end{equation} 
\end{proposition}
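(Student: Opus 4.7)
The plan is to obtain a closed-form expression for $T_n^{(-1)}$ from the functional equation \eqref{3.10} and then bound the increment $T_n^{(-1)}(z_1)-T_n^{(-1)}(z_2)$ by a constant times $|z_1-z_2|$, reducing the Lipschitz estimate to an a~priori bound on a weighted integral of $\tau^*$ that is built into the condition $z\in T_n(\mathbb C^+)$.

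First, applying \eqref{3.10} with $\mu=\mu^*$ and $Z=W$, substituting $z=\sqrt n w$, and using $T_n(w)=W(\sqrt n w)/\sqrt n$, one rewrites the relation as
$$
w=nT_n(w)-\tfrac{n-1}{\sqrt n}F_{\mu^*}(\sqrt n T_n(w)),\qquad w\in\mathbb C^+.
$$
Setting $z=T_n(w)$ yields the closed form
$$
T_n^{(-1)}(z)=nz-\tfrac{n-1}{\sqrt n}F_{\mu^*}(\sqrt n z),\qquad z\in T_n(\mathbb C^+).
$$
Inserting the representation $F_{\mu^*}(v)=v+\int_{\mathbb R}\tau^*(du)/(u-v)$ from \eqref{Pas5.0b} together with the resolvent identity $\tfrac1{u-v_1}-\tfrac1{u-v_2}=\tfrac{v_1-v_2}{(u-v_1)(u-v_2)}$, one obtains
$$
T_n^{(-1)}(z_1)-T_n^{(-1)}(z_2)=(z_1-z_2)\Big[1-(n-1)\int_{\mathbb R}\frac{\tau^*(du)}{(u-\sqrt n z_1)(u-\sqrt n z_2)}\Big].
$$
By Cauchy--Schwarz the integral in brackets is bounded in modulus by the geometric mean of $\int\tau^*(du)/|u-\sqrt n z_j|^2$ for $j=1,2$, so the assertion reduces to verifying the a~priori bound $(n-1)\int\tau^*(du)/|u-\sqrt n z|^2\le 1$ for every $z\in T_n(\mathbb C^+)$.

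The hard part, and really the only point with content, is this last bound, which I would derive directly from the image condition. Writing $z=T_n(w)$ with $w\in\mathbb C^+$ and taking imaginary parts of the functional identity for $w$ above, together with the standard identity $\Im F_{\mu^*}(v)=\Im v\cdot\big(1+\int\tau^*(du)/|u-v|^2\big)$, one obtains
$$
\frac{\Im w}{\Im T_n(w)}=1-(n-1)\int_{\mathbb R}\frac{\tau^*(du)}{|u-\sqrt n T_n(w)|^2}.
$$
Since $\Im w>0$ and $\Im T_n(w)>0$ (because $T_n$ maps $\mathbb C^+$ into $\mathbb C^+$), the left-hand side is non-negative, which delivers the required bound on the weighted integral. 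Combining the estimates yields the proposition with the explicit constant $c(\mu)=2$, independent of $n$.
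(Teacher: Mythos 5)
Your argument is correct, and it follows the paper's own setup exactly up to the resolvent identity: both you and the paper write $T_n^{(-1)}(z)=nz-\frac{n-1}{\sqrt n}F_{\mu^*}(z\sqrt n)$ and reduce the claim to bounding the factor $1-(n-1)\int\frac{\tau^*(du)}{(u-z_1\sqrt n)(u-z_2\sqrt n)}$. Where you diverge is in how that factor is controlled. The paper uses a hard pointwise estimate: by Lemma~\ref{l7.5} one has $|T_n(w)|\ge c>\delta_n$ for all $w$, hence $|u-z\sqrt n|\ge 10^{-2}\sqrt n$ on the support of $\tau^*$ for every $z\in T_n(\mathbb C^+)$, and the integral is then trivially $O(1/n)$, giving a constant of order $10^4$. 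You instead exploit the soft, defining property of the image domain: taking imaginary parts in $w=T_n^{(-1)}(T_n(w))$ and using $\Im w>0$, $\Im T_n(w)>0$ gives $(n-1)\int\tau^*(du)/|u-\sqrt n z|^2<1$ for every $z\in T_n(\mathbb C^+)$, and Cauchy--Schwarz then bounds the mixed integral by $1/(n-1)$, yielding the clean absolute constant $2$. Your route is self-contained (it does not invoke Lemma~\ref{l7.5}), gives a sharper and $\mu$-independent constant, and is essentially the observation that $T_n(\mathbb C^+)$ coincides with the region $\{y>y_n^*(x\sqrt n)/\sqrt n\}$ described in Section~3; the paper's route is cruder but reuses an estimate it needs elsewhere anyway. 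Both are valid proofs of the proposition.
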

\begin{proof}
Using the formula
\begin{equation}\label{th4bpro1.1}
 T_n^{(-1)}(z)=nz-\frac{n-1}{\sqrt n}F_{\mu^*}(z\sqrt n)=z-\frac{n-1}{\sqrt n}\int_{-\delta_n\sqrt{n-1}}^{\delta_n\sqrt{n-1}}
\frac{\tau(du)}{u-z\sqrt n},\quad z\in\mathbb C^+,
\end{equation}
we have the relation, for $z_1,z_2\in\mathbb C^+$,
\begin{equation}\notag
T_n^{(-1)}(z_1)-T_n^{(-1)}(z_2)=(z_1-z_2)\Big(1-(n-1)\int_{-\delta_n\sqrt{n-1}}^{\delta_n\sqrt{n-1}}
\frac{\tau(du)}{(u-z_1\sqrt n)(u-z_2\sqrt n)}\Big). 
\end{equation}
Since, by Lemma~3.5, $|u-z\sqrt n|\ge 10^{-2}\sqrt{n}$ for $|u|\le \sqrt{n-1}/\pi$ and $z\in T_n(\mathbb C^+)$, we 
immediately arrive at the assertion of the proposition.
\end{proof}

\begin{proposition}\label{th4bpro1a}
For $z\in S_n(\mathbb C^+)$,
\begin{equation}\notag
|T_n^{(-1)}(z)-S_n^{(-1)}(z)|\le \frac{\tau(\{|u|>\delta_n\sqrt{n-1}\})}{\Im z}.
\end{equation} 
\end{proposition}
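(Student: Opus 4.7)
The plan is to derive closed-form expressions for both inverse maps $S_n^{(-1)}$ and $T_n^{(-1)}$ and then subtract. The expression for $T_n^{(-1)}$ is already given in (\ref{th4bpro1.1}). An identical argument yields the analogous formula for $S_n^{(-1)}$: by Proposition~\ref{3.3pro} applied to $\mu$, $\sqrt n\,z = n\sqrt n\,S_n(z) - (n-1)F_{\mu}(\sqrt n\,S_n(z))$ for $z\in\mathbb C^+$, and substituting the Nevanlinna representation (\ref{Pas5.0a}) of $F_{\mu}$ and solving for $z$ produces
$$S_n^{(-1)}(z)=z-\frac{n-1}{\sqrt n}\int_{\mathbb R}\frac{\tau(du)}{u-z\sqrt n},\qquad z\in S_n(\mathbb C^+).$$

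The second step is simple algebra. Subtracting from (\ref{th4bpro1.1}) the free $z$-terms cancel, and the integrand over $|u|\le\delta_n\sqrt{n-1}$ cancels as well, leaving only the tail contribution:
$$T_n^{(-1)}(z)-S_n^{(-1)}(z)=\frac{n-1}{\sqrt n}\int_{|u|>\delta_n\sqrt{n-1}}\frac{\tau(du)}{u-z\sqrt n}.$$

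The third and final step is a trivial imaginary-part bound. For $z\in S_n(\mathbb C^+)=\hat D_n$ (by Lemma~\ref{l7.4}) one has $\Im z>0$, and since $u$ is real, $|u-z\sqrt n|\ge \sqrt n\,\Im z$ under the integral. Hence
$$|T_n^{(-1)}(z)-S_n^{(-1)}(z)|\le \frac{n-1}{n\,\Im z}\,\tau\bigl(\{|u|>\delta_n\sqrt{n-1}\}\bigr)\le \frac{\tau(\{|u|>\delta_n\sqrt{n-1}\})}{\Im z},$$
which is the claimed bound.

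I do not foresee any real obstacle; the only conceptual point is recognizing that the inversion identity (\ref{th4bpro1.1}), derived for $\mu^*$, carries over verbatim to $\mu$ with the full Nevanlinna measure $\tau$ replacing its truncation $\tau^*$, so that the difference of the two inverses localizes on the tail of $\tau$ beyond $\delta_n\sqrt{n-1}$.
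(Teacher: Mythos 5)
Your proposal is correct and is essentially the paper's own argument: the paper likewise writes $S_n^{(-1)}(z)=nz-\frac{n-1}{\sqrt n}F_{\mu}(z\sqrt n)=z-\frac{n-1}{\sqrt n}\int_{\mathbb R}\frac{\tau(du)}{u-z\sqrt n}$, subtracts the truncated formula for $T_n^{(-1)}$ so that only the tail $|u|>\delta_n\sqrt{n-1}$ survives, and bounds the integrand by $|u-z\sqrt n|\ge\sqrt n\,\Im z$. No gaps.
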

\begin{proof}
Using (\ref{th4bpro1.1}) and the formula
\begin{equation}\label{th4bpro3.1}
S_n^{(-1)}(z)=nz-\frac{n-1}{\sqrt n}F_{\mu}(z\sqrt n)=z-\frac{n-1}{\sqrt n}\int_{\mathbb R}
\frac{\tau(du)}{u-z\sqrt n},\quad z\in\mathbb C^+,
\end{equation} 
we have, taking into account that $S_n(\mathbb C^+)\subseteq T_n(\mathbb C^+)$,
\begin{equation}\notag
|S_n^{(-1)}(z)-T_n^{(-1)}(z)|=\frac{n-1}{\sqrt n}\Big|\int_{|u|>\delta_n\sqrt{n-1}}\frac{\tau(du)}{u-z\sqrt n}\Big| \le
\frac{\tau(\{|u|>\delta_n\sqrt{n-1}\})}{\Im z} 
\end{equation}
for $z\in S_n(\mathbb C^+)$, proving the proposition.
\end{proof}
\begin{proposition}\label{th4bpro1b}
For $x_1,x_2\in I_n$, we have the estimate 
\begin{equation}\notag
|T_n(x_1)-T_n(x_2)|\le c(\mu)\frac{|x_1-x_2|+\varepsilon_{n1}/n}
{\min_{j=1,2}\{\sqrt{4-(e_n(x_j-a_n))^2}\}}.
\end{equation} 
\end{proposition}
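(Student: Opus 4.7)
The plan is to reduce the estimate to an elementary computation on the explicit free Meixner function $M_n$. Applying Theorem~\ref{th4a} (whose conclusion extends from $D_n$ to real $x\in I_n$ by the continuity of $T_n$ up to the real axis guaranteed by Lemma~\ref{l7.4} together with the continuity of $M_n$), I write, for every $x\in I_n$,
\begin{equation}\notag
T_n(x) = M_n(x) + R_n(x), \qquad |R_n(x)|\le \frac{\varepsilon_{n1}}n\cdot\frac{c(\mu)}{\sqrt{4-(e_n(x-a_n))^2}},
\end{equation}
where the expression $\sqrt{(e_n(x-a_n))^2-4}$ in (\ref{th4.1}) is interpreted, via its boundary value, as $i\sqrt{4-(e_n(x-a_n))^2}$. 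The triangle inequality then gives
\begin{equation}\notag
|T_n(x_1)-T_n(x_2)|\le |M_n(x_1)-M_n(x_2)|+|R_n(x_1)|+|R_n(x_2)|,
\end{equation}
and the two remainder contributions together are at most $2c(\mu)(\varepsilon_{n1}/n)/\min_j\sqrt{4-(e_n(x_j-a_n))^2}$, which already has the form of the target bound.

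It therefore suffices to estimate $|M_n(x_1)-M_n(x_2)|$. For $x\in I_n$ the radicand in the definition of $M_n$ is non-positive, so
\begin{equation}\notag
M_n(x)=a_n+\frac{1+b_n}2(x-a_n)+\frac{i\sqrt{1-d_n}}2\, g(x),\quad g(x):=\sqrt{4-e_n^2(x-a_n)^2}.
\end{equation}
The real part is Lipschitz with constant $(1+b_n)/2$, contributing at most $c(\mu)|x_1-x_2|$. For the imaginary part I use the factorization
\begin{equation}\notag
g(x_1)-g(x_2)=\frac{g(x_1)^2-g(x_2)^2}{g(x_1)+g(x_2)}=\frac{-e_n^2(x_1-x_2)\bigl((x_1-a_n)+(x_2-a_n)\bigr)}{g(x_1)+g(x_2)}.
\end{equation}
Since $|x_j-a_n|\le 2/e_n$ on $I_n$, the numerator is bounded by $4e_n|x_1-x_2|$, while $g(x_1)+g(x_2)\ge \min_j g(x_j)$. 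Consequently $|g(x_1)-g(x_2)|\le c(\mu)|x_1-x_2|/\min_j g(x_j)$.

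Combining the two contributions and observing that $\min_j g(x_j)\le 2$ allows the Lipschitz part to be absorbed into the same form ($|x_1-x_2|\le 2|x_1-x_2|/\min_j g(x_j)$), one obtains
\begin{equation}\notag
|M_n(x_1)-M_n(x_2)|\le \frac{c(\mu)|x_1-x_2|}{\min_j\sqrt{4-(e_n(x_j-a_n))^2}},
\end{equation}
which together with the remainder bound above yields the proposition. There is no serious obstacle in this argument; the only point requiring a line of care is the passage of (\ref{th4.1}) from $D_n$ to the real interval $I_n$ via continuity, and the identification of the boundary value of the square root. Everything else is the elementary identity $a-b=(a^2-b^2)/(a+b)$ applied to $g$.
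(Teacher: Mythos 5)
Your proposal is correct and follows essentially the same route as the paper: both invoke Theorem~\ref{th4a} to reduce to $M_n(x_1)-M_n(x_2)$ plus remainders bounded by $(\varepsilon_{n1}/n)/\sqrt{4-(e_n(x_j-a_n))^2}$, and both handle the square-root difference via the factorization $a-b=(a^2-b^2)/(a+b)$, bounding the resulting denominator below by the minimum of $\sqrt{4-(e_n(x_j-a_n))^2}$. The only cosmetic difference is that you separate real and imaginary parts of $M_n$ whereas the paper works with the complex expression directly.
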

\begin{proof}
By Theorem~\ref{th4a}, we have the following relation
\begin{align}\label{th4bpro1b.1}
T_n(x_1)-T_n(x_2)&=M_n(x_1)-M_n(x_2)+\frac{\varepsilon_{n1}}n\,\frac{c(\mu)\,\theta}{\sqrt{(e_n(x_1-a_n))^2-4}}\notag\\
&-\frac{\varepsilon_{n1}}n\,\frac{c(\mu)\,\theta}{\sqrt{(e_n(x_2-a_n))^2-4}},
\end{align}
where $x_1,x_2\in I_n$.
On the other hand it is easy to see that
\begin{align}
&M_n(x_1)-M_n(x_2)=\frac{(1+b_n)(x_1-x_2)}2\notag\\ 
&+\frac 12\frac{(1-b_n)^2(x_1-x_2)(x_1+x_2-2a_n)}
{\sqrt{(1-b_n)^2(x_1-a_n)^2-4(1-d_n)}+\sqrt{(1-b_n)^2(x_2-a_n)^2-4(1-d_n)}}.\label{th4bpro1b.2}
\end{align} 
Moreover, we have, for $x_1,x_2\in I_n$,
\begin{align}
&\Big|\sqrt{(1-b_n)^2(x_1-a_n)^2-4(1-d_n)}
+\sqrt{(1-b_n)^2(x_2-a_n)^2-4(1-d_n)}\Big|\notag\\
&=(|\sqrt{4(1-d_n)-(1-b_n)^2(x_1-a_n)^2}|+|\sqrt{4(1-d_n)-(1-b_n)^2(x_2-a_n)^2}|). \notag
\end{align} 
In view of this relation and (\ref{th4bpro1b.1}), (\ref{th4bpro1b.2}), we easily obtain the assertion of the proposition.
\end{proof}

\begin{proposition}\label{th4bpro1c}
For $x\in I_n$, 
the following formula holds
\begin{align}
T_n(x)=a_n&+\frac 12\Big((1+b_n)(x-a_n)+(1-b_n)\sqrt{(x-a_n)^2-4/e_n^2}\Big)\notag\\
&+\frac{\varepsilon_{n1}}n\,\frac{c(\mu)\,\theta}{\sqrt{(e_n(x-a_n))^2-4}}.  \notag
\end{align}
\end{proposition}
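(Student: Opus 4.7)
The plan is to obtain the formula as the boundary limit (from $\mathbb{C}^+$) of the asymptotic identity in Theorem~\ref{th4a}. First I will rewrite the Meixner reciprocal Cauchy transform $M_n(z)$ in the claimed form by using $e_n=(1-b_n)/\sqrt{1-d_n}$, which yields the algebraic identity
\[(1-b_n)^2(z-a_n)^2 - 4(1-d_n) = (1-d_n)\bigl(e_n^2(z-a_n)^2 - 4\bigr).\]
With the branch of the square root fixed as in the definition of $M_n$, this gives
\[M_n(z) = a_n + \tfrac{1}{2}\Bigl((1+b_n)(z-a_n) + (1-b_n)\sqrt{(z-a_n)^2 - 4/e_n^2}\Bigr),\quad z\in\mathbb{C}^+.\]

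Next I will apply Theorem~\ref{th4a} at $z=x+i\varepsilon$ for $x\in I_n$ and small $\varepsilon>0$. Since $|x-a_n|\le 2/e_n-\varepsilon_{n1}/n$, the point $x+i\varepsilon$ lies in $D_n$, so
\[T_n(x+i\varepsilon) = M_n(x+i\varepsilon) + \frac{\varepsilon_{n1}}{n}\,\frac{c(\mu)\,\theta_\varepsilon}{\sqrt{(e_n(x+i\varepsilon-a_n))^2-4}},\quad |\theta_\varepsilon|\le 1.\]
Letting $\varepsilon\downarrow 0$, the left-hand side tends to $T_n(x)$ by continuity of $T_n$ up to the real axis (Lemma~\ref{l7.4}), and the first term on the right tends to $M_n(x)$ by the explicit continuity of the Meixner formula.

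The only point requiring care is the error: for $x\in I_n$ we have $|x-a_n|<2/e_n$ strictly, hence $(e_n(x-a_n))^2-4<0$ and
\[\bigl|\sqrt{(e_n(x+i\varepsilon-a_n))^2-4}\bigr|\longrightarrow \sqrt{4-(e_n(x-a_n))^2}>0\quad\text{as }\varepsilon\downarrow 0.\]
The remainder therefore passes to the limit with modulus at most $\varepsilon_{n1}c(\mu)/(n\sqrt{4-(e_n(x-a_n))^2})$; absorbing the purely imaginary phase of $\sqrt{(e_n(x-a_n))^2-4}$ into the complex parameter $\theta$ with $|\theta|\le 1$ produces exactly the form stated.

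I do not anticipate any genuine obstacle here: all the analytic effort -- the quintic-equation factorization $Q(z,w)=(w^2+s_1w+s_2)(w^3+g_1w^2+g_2w+g_3)$, the Vieta bounds on its coefficients, and the perturbative identification of the relevant root with $T_n$ on $D_n$ -- has already been carried out in Theorem~\ref{th4a}. The present proposition is simply the boundary-value specialization of that theorem to the real interval $I_n$, and the mild issue of checking that the $\sqrt{\cdot}$ in the denominator of the error does not degenerate is handled by the strict inequality built into the definition of $I_n$.
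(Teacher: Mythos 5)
Your proposal is correct and follows exactly the route the paper takes: the paper's own proof consists of the single sentence that the proposition ``immediately follows from Theorem~\ref{th4a},'' and your argument merely spells out the two implicit steps (the algebraic identity $(1-b_n)^2(z-a_n)^2-4(1-d_n)=(1-d_n)\bigl(e_n^2(z-a_n)^2-4\bigr)$ rewriting $M_n$, and the passage to boundary values on $I_n$ where the denominator of the error stays bounded away from zero). No discrepancy with the paper's approach.
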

\begin{proof}
The proof immediately follows from Theorem~\ref{th4a}. 
\end{proof}

\begin{proposition}\label{th4bpro3a}
For $x\in\mathbb R$, the following estimates hold
\begin{align}
 |S_n(x)|&\le |x|+\sqrt{\frac {n}{n-1}},\label{th4bpro3a1}\\
 |x-\tilde{x}|&\le 2\sqrt{\frac {n}{n-1}}.\label{th4bpro3a2}
\end{align}
\end{proposition}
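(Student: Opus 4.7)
The plan is to exploit Lemmas~\ref{l7.6} and \ref{l7.7} which together give a uniform bound on $F_{\mu_n}(z)-z$ on $\mathbb{C}^+\cup\mathbb{R}$, and then to translate this bound to $S_n$ and $T_n$ via their explicit relation to $F_{\mu_n}$ and $F_{\mu_n^*}$ coming from Proposition~\ref{3.3pro}.

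First I would derive the identity relating $S_n$ and $F_{\mu_n}$. From Proposition~\ref{3.3pro}, $z=nZ(z)-(n-1)F_{\mu}(Z(z))$, and since $F_{\mu^{n\boxplus}}(z)=F_\mu(Z(z))$ we have $F_{\mu_n}(z)=F_\mu(\sqrt{n}\,S_n(z))/\sqrt{n}$. Rewriting the defining relation at argument $\sqrt{n}\,z$ and dividing by $\sqrt{n}$ gives
\begin{equation}\notag
S_n(z)-z=\frac{n-1}{n}\bigl(F_{\mu_n}(z)-z\bigr),\qquad z\in\mathbb{C}^+\cup\mathbb{R}.
\end{equation}
By Lemma~\ref{l7.6}, $F_{\mu_n}(z)-z=-G_{\nu_{n-1}\boxplus w_t}(z)$ with $t=(n-1)/n$, and by Lemma~\ref{l7.7}, $|G_{\nu_{n-1}\boxplus w_t}(z)|\le\sqrt{n/(n-1)}$ on $\mathbb{C}^+\cup\mathbb{R}$. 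Combining these three relations,
\begin{equation}\notag
|S_n(x)-x|\le\frac{n-1}{n}\sqrt{\frac{n}{n-1}}=\sqrt{\frac{n-1}{n}},\qquad x\in\mathbb{R},
\end{equation}
which is stronger than (\ref{th4bpro3a1}) and implies it by the triangle inequality.

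For (\ref{th4bpro3a2}) I would apply the identical argument with $\mu$ replaced by $\mu^*$ and $S_n$ replaced by $T_n$ (note that $\mu^*$ also satisfies $m_1(\mu^*)=0$ and has finite variance, so Lemmas~\ref{l7.6} and~\ref{l7.7} apply). This yields $|T_n(\tilde{x})-\tilde{x}|\le\sqrt{(n-1)/n}$. Taking real parts,
\begin{equation}\notag
|\Re S_n(x)-x|\le\sqrt{(n-1)/n},\qquad |\Re T_n(\tilde{x})-\tilde{x}|\le\sqrt{(n-1)/n}.
\end{equation}
By the definition of $\tilde{x}$ just before the proposition, $\Re S_n(x)=\Re T_n(\tilde{x})$, so
\begin{equation}\notag
|x-\tilde{x}|\le|x-\Re S_n(x)|+|\Re T_n(\tilde{x})-\tilde{x}|\le 2\sqrt{\tfrac{n-1}{n}}\le 2\sqrt{\tfrac{n}{n-1}}.
\end{equation}

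There is no serious obstacle here: the entire argument rests on the two auxiliary lemmas cited from Maassen--Wang and Biane, and on the explicit subordination identity rewritten in terms of $S_n$ and $T_n$. The only point to verify carefully is that Lemma~\ref{l7.6} applies to $\mu^*$ as well (it does, since $\mu^*$ is a probability measure with $m_1(\mu^*)=0$ and $m_2(\mu^*)\le 1$ by construction in Section~5), which guarantees that both $S_n$ and $T_n$ remain within distance $\sqrt{(n-1)/n}$ of the identity on the real line.
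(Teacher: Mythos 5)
Your proposal is correct and follows essentially the same route as the paper: both rewrite the subordination equation as $S_n(z)=\frac zn+\frac{n-1}nF_{\mu_n}(z)$, invoke Lemma~\ref{l7.6} to express the deviation from the identity as a Cauchy transform of a free convolution with a semicircle, bound it by Lemma~\ref{l7.7}, and deduce (\ref{th4bpro3a2}) from $\Re S_n(x)=\Re T_n(\tilde{x})$ by the triangle inequality. Your bookkeeping of the factor $\frac{n-1}{n}$ is in fact slightly more careful than the paper's, which writes $S_n(z)=z-G_{\nu_{n-1}\boxplus w_t}(z)$ outright, but the resulting bounds are the same.
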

\begin{proof}
We note from (\ref{3.10}) that 
$
S_n(z)=\frac zn+\frac{n-1}nF_{\mu_n}(z),
$
where $\mu_n$ is the distribution of $Y_n$. By Proposition~\ref{l7.6}, we see that $S_n(z)=z-G_{\nu_{n-1}\boxplus w_t}(z),\,z\in\mathbb C^+\cup\mathbb R$,
where the measure $\nu_{n-1}$ is given by $d\nu_{n-1}(x)=d\nu(\sqrt n x)$, where, by (\ref{Pas5.0a}), $\nu=\tau$,
and $t=t(n)=(n-1)/n$. Since, by Proposition~\ref{l7.7}, $|G_{\nu_{n-1}\boxplus w_t}(z)|
\le \sqrt{\frac n{n-1}}, \,z\in\mathbb C^+\cup\mathbb R$, we obtain the upper bound (\ref{th4bpro3a1}). 

Now we note that in the same way as above $T_n(z)=z-G_{\zeta_{n-1}\boxplus w_t}(z),\,z\in\mathbb C^+\cup\mathbb R$,
where the measure $\zeta_{n-1}$ is given by $d\zeta_{n-1}(x)=d\zeta(\sqrt n x)$, where, by (\ref{Pas5.0b}), $\zeta$
a narrowing of the measure $\tau$ on the interval $[-\delta\sqrt{n-1},\delta\sqrt{n-1}]$. Moreover $|G_{\zeta_{n-1}\boxplus w_t}(z)|
\le \sqrt{\frac n{n-1}}, \,z\in\mathbb C^+\cup\mathbb R$.

It remains to write the following relation
\begin{equation}\notag
\tilde{x}-\Re G_{\zeta_{n-1}\boxplus w_t}(\tilde{x})=\Re T_n(\tilde{x})=\Re S_n(x)=x-G_{\nu_{n-1}\boxplus w_t}(x)
\end{equation}
and obtain from here the upper bound (\ref{th4bpro3a2}). The proposition is proved.
\end{proof}
 
\begin{proposition}\label{th4bpro3}
Let $x\in \mathbb R$. 
Then
\begin{equation}\label{th4b.1*}
h(x)\le c(\mu)\tau(\{|u|>\delta_n\sqrt{n-1}\})\frac{1+|S_n(x)|^2}{(\Im S_n(x))^3}. 
\end{equation}
\end{proposition}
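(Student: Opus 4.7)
The plan is to compare the defining integral equations for the curves $\hat\gamma_n$ and $\hat\gamma_n^*$ at the common real abscissa $t:=\Re S_n(x)=\Re T_n(\tilde x)$, exploiting that the inverse maps $S_n^{(-1)}$ and $T_n^{(-1)}$ in (\ref{th4bpro3.1}) and (\ref{th4bpro1.1}) differ only through truncation of the same measure $\tau$. Setting $z_0:=S_n(x)=t+iy_0$ and $z_1:=T_n(\tilde x)=t+iy_0^*$, one has $y_0=\Im S_n(x)\ge y_0^*=\Im T_n(\tilde x)\ge 0$ and $h(x)=y_0-y_0^*$; I may assume $y_0>0$, since otherwise $y_0^*=0$ as well and the claim is trivial. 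The reality of $S_n^{(-1)}(z_0)=x$, applied to (\ref{th4bpro3.1}) by taking imaginary parts, produces the defining equation
\[
(n-1)\int_{\mathbb R}\frac{\tau(du)}{|u-z_0\sqrt n|^2}=1,
\]
while $T_n^{(-1)}(z_1)=\tilde x\in\mathbb R$ gives the same relation with integration truncated to $\{|u|\le\delta_n\sqrt{n-1}\}$ and $z_0$ replaced by $z_1$, with ``$=1$'' when $y_0^*>0$ and ``$\le 1$'' in the degenerate case $y_0^*=0$.

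Subtracting the second relation from the first and using the algebraic identity $|u-z_0\sqrt n|^{-2}-|u-z_1\sqrt n|^{-2}=((y_0^*)^2-y_0^2)n/(|u-z_0\sqrt n|^2|u-z_1\sqrt n|^2)$ yields the key inequality $L\ge(y_0^2-(y_0^*)^2)D$, where
\[
L:=(n-1)\int_{|u|>\delta_n\sqrt{n-1}}\frac{\tau(du)}{|u-z_0\sqrt n|^2},\quad D:=n(n-1)\int_{|u|\le\delta_n\sqrt{n-1}}\frac{\tau(du)}{|u-z_0\sqrt n|^2\,|u-z_1\sqrt n|^2}.
\]
The trivial bound $|u-z_0\sqrt n|^2\ge y_0^2 n$ gives $L\le\tau(\{|u|>\delta_n\sqrt{n-1}\})/y_0^2$. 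The main obstacle is the lower bound on $D$: here I would exploit $|u|\le\delta_n\sqrt{n-1}\le\sqrt n$ to obtain the uniform estimate $|u-z_0\sqrt n|^2\le 2n(1+|z_0|^2)$, pull this bound out of the integrand, and invoke the defining equation for $z_1$ to conclude $D\ge 1/[2(1+|z_0|^2)]$ in the case $y_0^*>0$.

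Combining these estimates with $h(x)=(y_0^2-(y_0^*)^2)/(y_0+y_0^*)\le(y_0^2-(y_0^*)^2)/y_0$ produces
\[
h(x)\le\frac{L}{y_0 D}\le\frac{c(\mu)(1+|S_n(x)|^2)\,\tau(\{|u|>\delta_n\sqrt{n-1}\})}{(\Im S_n(x))^3},
\]
which is (\ref{th4b.1*}). The borderline case $y_0^*=0$ requires a short separate treatment, since the defining equation for $z_1$ is then only an inequality. Introducing the slack $Q:=1-(n-1)\int_{|u|\le\delta_n\sqrt{n-1}}\tau(du)/(u-t\sqrt n)^2\ge 0$ and repeating the subtraction yields $y_0^2 D'=L-Q$, where $D'$ is the analogue of $D$ at $z_1=t$ and satisfies $D'\ge(1-Q)/[2(1+|z_0|^2)]$ by the same pull-out estimate. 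A dichotomy on whether $Q\le 1/2$ or $Q>1/2$, combined with the a~priori bound $y_0^2\le(n-1)/n$ coming from $|u-z_0\sqrt n|^2\ge y_0^2 n$ in the defining equation for $z_0$, produces $y_0^4\le c(\mu)\tau(\{|u|>\delta_n\sqrt{n-1}\})(1+|S_n(x)|^2)$, and hence (\ref{th4b.1*}) in this regime as well.
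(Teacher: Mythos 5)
Your proposal is correct and follows essentially the same route as the paper: subtract the two normalization identities $\Im S_n^{(-1)}(S_n(x))=0$ and $\Im T_n^{(-1)}(T_n(\tilde x))=0$ at the matched real part, bound the resulting tail integral above by $\tau(\{|u|>\delta_n\sqrt{n-1}\})/(\Im S_n(x))^2$ and the difference integral below by $c\,h(x)\Im S_n(x)/(1+|S_n(x)|^2)$. Your lower bound on $D$ (recycling the $z_1$-normalization to save a factor of $1+|S_n(x)|^2$) and your explicit dichotomy for the degenerate case $\Im T_n(\tilde x)=0$, which the paper dismisses as "the same way," are both sound and slightly more carefully written than the original.
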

\begin{proof} 
Without loss of generality we assume that $\Im T_n(\tilde{x})>0$. The case $\Im T_n(\tilde{x})=0$ considers in the same way.
It follows from (\ref{th4bpro1.1}) that
\begin{equation}\label{th4bpro1.4*}
1=\frac{n-1}{n}\int_{-\delta_n\sqrt{n-1}}^{\delta_n\sqrt{n-1}}
\frac{\tau(du)}{(\Re T_n(\tilde{x})-u/\sqrt n)^2+(\Im T_n(\tilde x))^2}.
\end{equation}
The formula (\ref{th4bpro3.1}) gives us
\begin{equation}\label{th4bpro1.5*}
1=\frac{n-1}{n}\Big(\int_{-\delta_n\sqrt{n-1}}^{\delta_n\sqrt{n-1}}+\int_{|u|>\delta_n\sqrt{n-1}}\Big)
\frac{\tau(du)}{(\Re T_n(\tilde{x})-u/\sqrt n)^2+(\Im T_n(\tilde x)+h(x))^2}.
\end{equation}

For $x\in\mathbb R$  
we have the following 
lower bound
\begin{align}\label{th4bpro3.2}
\tilde{I}_1&:=\int_{|u|\le \delta_n\sqrt{n-1}}\frac{\tau(du)}{(u/\sqrt n-\Re T_n(\tilde{x}))^2+(\Im T_n(\tilde x))^2}\notag\\
&-\int_{|u|\le \delta_n\sqrt{n-1}}\frac{\tau(du)}{(u/\sqrt n-\Re T_n(\tilde{x}))^2+(\Im T_n(\tilde x)+h(x))^2}\notag\\
&\ge \int_{|u|\le \delta_n\sqrt{n-1}}\frac{nh(x)(2\Im T_n(\tilde x)+h(x)) \,\tau(du)}{(u/\sqrt n-\Re S_n(x))^2+(\Im S_n(x))^2}
\ge\frac {c(\mu)h(x)\Im S_n(x)}{1+|S_n(x)|^2}
\end{align}
and the upper bound
\begin{equation}\label{th4bpro3.3}
\tilde{I}_2:=\int_{|u|>\delta_n\sqrt{n-1}}\frac{\tau(du)}{(\Re T_n(\tilde{x})-u/\sqrt n)^2+(\Im T_n(\tilde x)+h(x))^2}\le
\frac{\tau(\{|u|>\delta_n\sqrt{n-1}\})}{(\Im S_n(x))^2}.
\end{equation}
It follows from (\ref{th4bpro1.4*}) and (\ref{th4bpro1.5*}) that $\tilde{I}_1=\tilde{I}_2$, therefore we obtain the assertion of the proposition from
(\ref{th4bpro3.2}) and (\ref{th4bpro3.3}).
\end{proof}

\begin{proposition}\label{th4bpro4*}
Let $x\in \mathbb R$. 
Then
\begin{equation}\label{th4b.2*}
|x-\tilde {x}|\le c(\mu)\tau(\{|u|>\delta_n\sqrt{n-1}\})\frac{1+|S_n(x)|^2}{(\Im S_n(x))^{2}}.  
\end{equation}
\end{proposition}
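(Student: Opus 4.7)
The strategy mirrors the proof of Proposition~\ref{th4bpro3}: I extract the real parts of the identities $\tilde x=T_n^{(-1)}(T_n(\tilde x))$ and $x=S_n^{(-1)}(S_n(x))$ from the representations (\ref{th4bpro1.1}) and (\ref{th4bpro3.1}), and then exploit an algebraic consequence of the imaginary-part equations (\ref{th4bpro1.4*}) and (\ref{th4bpro1.5*}). Assume first that $b_1:=\Im T_n(\tilde x)>0$ (the case $b_1=0$ is analogous, as in the preceding proof). Write $a:=\Re S_n(x)=\Re T_n(\tilde x)$, $b_2:=\Im S_n(x)$, $A:=(u/\sqrt n-a)^2+b_1^2$, $B:=(u/\sqrt n-a)^2+b_2^2$, so that $h(x)=b_2-b_1$ and $b_2^2-b_1^2=h(x)(b_1+b_2)$. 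Taking real parts in (\ref{th4bpro1.1}) at $z=T_n(\tilde x)$ and in (\ref{th4bpro3.1}) at $z=S_n(x)$ yields
$$
\tilde x=a-\frac{n-1}{n}\int_{|u|\le\delta_n\sqrt{n-1}}\frac{(u/\sqrt n-a)\,\tau(du)}{A},\qquad
x=a-\frac{n-1}{n}\int_{\mathbb R}\frac{(u/\sqrt n-a)\,\tau(du)}{B}.
$$

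Subtracting, splitting the $x$-integral at $|u|=\delta_n\sqrt{n-1}$, and using $1/B-1/A=-h(x)(b_1+b_2)/(AB)$, I obtain $x-\tilde x=-\mathcal T+\mathcal B$, where
$$
\mathcal T=\frac{n-1}{n}\int_{|u|>\delta_n\sqrt{n-1}}\frac{(u/\sqrt n-a)\,\tau(du)}{B},\quad
\mathcal B=\frac{(n-1)h(x)(b_1+b_2)}{n}\int_{|u|\le\delta_n\sqrt{n-1}}\frac{(u/\sqrt n-a)\,\tau(du)}{AB}.
$$
The tail term $\mathcal T$ is handled by the AM-GM bound $(u/\sqrt n-a)^2+b_2^2\ge 2b_2|u/\sqrt n-a|$, which gives $|u/\sqrt n-a|/B\le 1/(2b_2)$ and hence $|\mathcal T|\le\tau(\{|u|>\delta_n\sqrt{n-1}\})/(2\Im S_n(x))$.

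The decisive step is the bound on $\mathcal B$. Subtracting (\ref{th4bpro1.4*}) from (\ref{th4bpro1.5*}) and substituting $1/A-1/B=h(x)(b_1+b_2)/(AB)$ yields the exact identity
$$
h(x)(b_1+b_2)\int_{|u|\le\delta_n\sqrt{n-1}}\frac{\tau(du)}{AB}=\int_{|u|>\delta_n\sqrt{n-1}}\frac{\tau(du)}{B}\le\frac{\tau(\{|u|>\delta_n\sqrt{n-1}\})}{(\Im S_n(x))^2}.
$$
Since $|u/\sqrt n-a|\le\delta_n+|a|\le 1+|S_n(x)|$ on the range of integration in $\mathcal B$, pulling this pointwise bound out of the integrand and then applying the above identity gives
$$
|\mathcal B|\le(1+|S_n(x)|)\,\frac{\tau(\{|u|>\delta_n\sqrt{n-1}\})}{(\Im S_n(x))^2}.
$$

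Combining the estimates for $|\mathcal T|$ and $|\mathcal B|$ together with the elementary inequalities $\Im S_n(x)\le|S_n(x)|\le 1+|S_n(x)|^2$ (which gives $1/\Im S_n(x)\le(1+|S_n(x)|^2)/(\Im S_n(x))^2$) and $1+|S_n(x)|\le 2(1+|S_n(x)|^2)$ produces the stated bound with an absolute constant. The main obstacle is precisely the bulk estimate: any direct Cauchy-Schwarz or AM-GM attack on $\int|u/\sqrt n-a|\,\tau(du)/(AB)$ introduces a spurious factor $1/\Im S_n(x)$ and only yields $(\Im S_n(x))^3$ in the denominator (as for Proposition~\ref{th4bpro3}); it is essential to invoke the identity $\int_{|u|\le\delta_n\sqrt{n-1}}(1/A-1/B)\tau=\int_{|u|>\delta_n\sqrt{n-1}}\tau/B$ in order to trade the full integrand $1/(AB)$ against the much smaller tail $\int\tau/B$ and thereby recover the correct power $(\Im S_n(x))^2$.
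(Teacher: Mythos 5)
Your argument is correct, and it coincides with the paper's proof up to and including the decomposition: the paper also takes real parts of the inverse maps (\ref{th4bpro1.1}) at $T_n(\tilde x)$ and (\ref{th4bpro3.1}) at $S_n(x)=T_n(\tilde x)+ih(x)$, obtaining $x-\tilde x=\frac{n-1}{n}(J_1(x)+J_2(x))$ with $J_2$ equal to your tail term $\mathcal T$ and $J_1$ to your bulk term $\mathcal B$ (up to sign and the harmless factor $(n-1)/n$), and it estimates the tail exactly as you do, by $|J_2(x)|\le c(\mu)\tau(\{|u|>\delta_n\sqrt{n-1}\})/\Im S_n(x)$. The difference is in how the bulk term is closed. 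The paper bounds the bulk integrand by a constant, using the uniform lower bounds on $|T_n(\tilde x)-u/\sqrt n|$ and $|S_n(x)-u/\sqrt n|$ that come from Lemma~\ref{l7.5}, to get $|J_1(x)|\le c(\mu)h(x)\Im S_n(x)$, and then simply invokes the already proved bound (\ref{th4b.1*}) of Proposition~\ref{th4bpro3}, $h(x)\le c(\mu)\tau(\{|u|>\delta_n\sqrt{n-1}\})(1+|S_n(x)|^2)/(\Im S_n(x))^3$, to conclude. You instead go back to the mechanism behind that proposition: subtracting the imaginary-part identities (\ref{th4bpro1.4*}) and (\ref{th4bpro1.5*}) — which is precisely the step $\tilde I_1=\tilde I_2$ in the proof of Proposition~\ref{th4bpro3} — and pulling out the pointwise bound $|u/\sqrt n-\Re S_n(x)|\le 1+|S_n(x)|$ on the truncated range (legitimate since $\delta_n\le 10^{-1/2}$), which disposes of the bulk term in one stroke. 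The net result is the same estimate; your version is somewhat more self-contained and economical, needing neither the Lemma~\ref{l7.5}-type lower bounds nor the conclusion of Proposition~\ref{th4bpro3}, and yielding an absolute constant in place of $c(\mu)$, while the paper's version is shorter on the page because it recycles Proposition~\ref{th4bpro3}. Your handling of the degenerate cases also matches the paper's: when $\Im T_n(\tilde x)=0$ the identity (\ref{th4bpro1.4*}) degrades to an inequality in the favourable direction, and when $\Im S_n(x)=0$ the claim is vacuous.
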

\begin{proof}
Without loss of generality we assume that $\Im S_n(x)>0$.
By the formula (\ref{th4bpro1.1}), we have
\begin{equation}\label{th4bpro1.1*}
\tilde{x}=\Re T_n^{(-1)}(T_n(\tilde{x}))=\Re T_n(\tilde{x})+\frac{n-1}{n}\int_{-\delta_n\sqrt{n-1}}^{\delta_n\sqrt{n-1}}
\frac{(\Re T_n(\tilde{x})-u/\sqrt n)\,\tau(du)}{(\Re T_n(\tilde{x})-u/\sqrt n)^2+(\Im T_n(\tilde x))^2}.
\end{equation}
On the other hand, by (\ref{th4bpro3.1}), we obtain
\begin{equation}\label{th4bpro1.2*}
x=\Re S_n^{(-1)}(T_n(\tilde{x})+ih(x))=\Re T_n(\tilde{x})+\frac{n-1}{n}\int_{\mathbb R}
\frac{(\Re T_n(\tilde{x})-u/\sqrt n)\,\tau(du)}{(\Re T_n(\tilde{x})-u/\sqrt n)^2+(\Im S_n(x))^2}.
\end{equation}
It follows from these formulae that
\begin{equation}\label{th4bpro1.3*}
x=\tilde{x}+\frac{n-1}{n}(J_1(x)+J_2(x)), 
\end{equation}
where
\begin{equation}\notag
J_1(x):=-\int_{-\delta_n\sqrt{n-1}}^{\delta_n\sqrt{n-1}}
\frac{(\Re T_n(\tilde{x})-u/\sqrt n)h(x)(\Im S_n(x)+\Im T_n(\tilde {x}))\,\tau(du)}
{((\Re T_n(\tilde{x})-u/\sqrt n)^2+(\Im T_n(\tilde x))^2)
((\Re T_n(\tilde{x})-u/\sqrt n)^2+(\Im S_n(x))^2}
\end{equation}
and
\begin{equation}\notag
J_2(x):=\int_{|u|>\delta_n\sqrt{n-1}}\frac{(\Re T_n(\tilde{x})-u/\sqrt n)\,\tau(du)}
{(\Re T_n(\tilde{x})-u/\sqrt n)^2+(\Im S_n(x))^2}. 
\end{equation}
It is easy to see that
\begin{equation}\notag
|J_1(x)|\le c(\mu)h(x)\Im S_n(x)\quad\text{and}\quad
|J_2(x)|\le c(\mu)\frac{\tau(\{|u|>\delta_n\sqrt{n-1}\})}{\Im S_n(x)}. 
\end{equation}
Therefore, using (\ref{th4b.1*}), we have
\begin{align}
|x-\tilde{x}|&\le c(\mu)\tau(\{|u|>\delta_n\sqrt{n-1}\})\Big(\frac{1+|S_n(x)|^2}{(\Im S_n(x))^{2}}+\frac 1{\Im S_n(x)}\Big)\notag\\
&\le c(\mu)\tau(\{|u|>\delta_n\sqrt{n-1}\})\frac{1+|S_n(x)|^2}{(\Im S_n(x))^{2}}
\end{align}
and (\ref{th4b.2*}) is proved.
\end{proof}
\begin{proposition}\label{th4bpro4**}
For $x\in I_n^*$, the following inequalities hold 
\begin{equation}
\frac 12((2/e_n)^2-(x-a_n)^2)\le (2/e_n)^2-(\tilde{x}-a_n)^2\le \frac 32((2/e_n)^2-(x-a_n)^2). \notag
\end{equation}
\end{proposition}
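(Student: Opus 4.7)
The plan is to bound $|A-B|$, where $A := (2/e_n)^2 - (x-a_n)^2$ and $B := (2/e_n)^2 - (\tilde x - a_n)^2$, via the algebraic identity
\[
A - B = (\tilde x - x)(\tilde x + x - 2a_n)
\]
combined with the control on $|\tilde x - x|$ from Proposition~\ref{th4bpro4*}. Since Proposition~\ref{th4bpro3a} gives $|\tilde x - x|\le 2\sqrt{n/(n-1)}$ and $x\in I_n^*$ keeps $x$ bounded, the second factor is bounded by $c(\mu)$, so it suffices to show that $|\tilde x - x|$ is small compared to $A$.

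First I would sharpen the tail estimate used in Proposition~\ref{th4bpro4*}: from (\ref{Pas5.0}),
\[
\tau\bigl(\{|u|>\delta_n\sqrt{n-1}\}\bigr)\le \frac{1}{\delta_n^2(n-1)}\int_{|u|>\delta_n\sqrt{n-1}}u^2\,\tau(du)\le \frac{c(\mu)\,\eta(n;\tau)}{n-1}\le \frac{c(\mu)\,\varepsilon_{n1}}{c_1(\mu)\,n},
\]
the last step by the definition of $\varepsilon_{n1}$. Combined with the boundedness of $|S_n(x)|$ on $I_n^*$ given by Proposition~\ref{th4bpro3a}, Proposition~\ref{th4bpro4*} then reads
\[
|\tilde x - x|\le \frac{c(\mu)\,\varepsilon_{n1}}{c_1(\mu)\,n\,(\Im S_n(x))^2}.
\]
To bound $\Im S_n(x)$ from below, I would use $\Im S_n(x)\ge \Im T_n(\tilde x)$ (i.e.\ $h(x)\ge 0$) and apply Proposition~\ref{th4bpro1c} at $\tilde x\in I_n$, which yields
\[
\Im T_n(\tilde x) = \tfrac{1-b_n}{2}\sqrt{B} + \frac{\varepsilon_{n1}}{n}\cdot\frac{c(\mu)\,\Im\theta}{e_n\sqrt{B}}.
\]
Therefore $(\Im S_n(x))^2\ge c(\mu)\,B$ once $B$ dominates $\varepsilon_{n1}/n$, and chaining the estimates gives
\[
|A-B|\le c(\mu)\,|\tilde x - x|\le \frac{c(\mu)\,\varepsilon_{n1}}{c_1(\mu)\,n\,B}.
\]

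For $x\in I_n^*$ the definition of $I_n^*$ yields $A\ge (2/e_n)\sqrt{\varepsilon_{n1}/n}$, hence $A^2\ge c(\mu)\varepsilon_{n1}/n$; under the working hypothesis $B\ge A/2$ the previous display therefore gives $|A-B|\le (c(\mu)/c_1(\mu))\,A\le A/2$, upon choosing $c_1(\mu)$ sufficiently large, which is allowed by the remark following the definition of $\varepsilon_{n1}$. The main obstacle is closing this bootstrap: both $B\ge A/2$ and $\tilde x\in I_n$ were assumed in deriving it. I would close it by continuity. The map $x\mapsto \tilde x$ is continuous on $\mathbb R$ (since $\Re S_n$ and $\Re T_n$ are continuous and strictly increasing, as noted at the start of this section), so $B$ depends continuously on $x$; for $x$ deep inside $I_n^*$, $\Im S_n(x)$ is bounded away from zero and the display above forces $\tilde x - x = o(1)$, hence $B/A = 1+o(1)$. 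As $x$ is moved continuously towards the boundary of $I_n^*$, the derived estimate $|A-B|\le A/2$ (valid as long as $B\ge A/2$) prevents $B/A$ from ever reaching $1/2$ or $3/2$, which delivers the required two-sided comparison.
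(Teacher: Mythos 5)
Your reduction to the factorization $A-B=(\tilde x-x)(\tilde x+x-2a_n)$ and to a bound $|\tilde x-x|\ll\sqrt{\varepsilon_{n1}/n}$ is in the same spirit as the paper, and the self-improving estimate you derive under the hypotheses ``$\tilde x\in I_n$ and $B\ge A/2$'' is fine. The genuine gap is the base case of your bootstrap: the assertion that ``for $x$ deep inside $I_n^*$, $\Im S_n(x)$ is bounded away from zero'' is not justified, and at this stage of the paper it cannot be taken for granted. Lower bounds are available only for $\Im T_n$ at points of $I_n$ (Theorem~\ref{th4a} / Proposition~\ref{th4bpro1c}, which concern the \emph{truncated} measure $\mu^*$); transferring them to $\Im S_n(x)=h(x)+\Im T_n(\tilde x)$ requires knowing that $\tilde x$ lies in the bulk, which is exactly what the bootstrap is supposed to establish. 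The only a priori localization of $\tilde x$ is Proposition~\ref{th4bpro3a}, $|x-\tilde x|\le 2\sqrt{n/(n-1)}$, and since the bulk has half-width $2/e_n\approx 2$ this does not place $\tilde x$ inside $I_n$ even for $x=a_n$; likewise Propositions~\ref{th4bpro3} and \ref{th4bpro4*} have $\Im S_n(x)$ in the denominator, so they cannot produce the needed lower bound. A pointwise lower bound on $\Im S_n$ (equivalently on a density-type quantity for the untruncated $\mu_n$) is essentially what Theorem~\ref{th7} is being built to provide, so assuming it at an interior point is circular unless you import an independent argument (e.g.\ a Berry--Esseen-type statement plus the bound $|G_{\nu_{n-1}\boxplus w_t}|\le\sqrt{n/(n-1)}$ to manufacture one good base point), none of which appears in your proposal.

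The paper avoids this entirely by running the argument in the $\tilde x$ variable, i.e.\ through the inverse map $\tilde x\mapsto x(\tilde x)$: for $|\tilde x-a_n|\le 2/e_n-\sqrt{\varepsilon_{n1}/(c_1(\mu)n)}$, Proposition~\ref{th4bpro1c} gives the explicit bound $\Im T_n(\tilde x)\ge\frac14(\varepsilon_{n1}/(c_1(\mu)n))^{1/4}$, hence $\Im S_n(x(\tilde x))\ge\Im T_n(\tilde x)$ is bounded below \emph{by construction}, and Proposition~\ref{th4bpro4*} then yields $|x(\tilde x)-\tilde x|\le\frac1{100}\sqrt{\varepsilon_{n1}/n}$ (using that $c(\mu)$ does not depend on $c_1(\mu)$). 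Since $x(\tilde x)$ is monotone and continuous, every $x\in I_n^*$ is the image of such a $\tilde x$, so the displacement bound holds for all $x\in I_n^*$ and the two-sided inequality follows from the factorization together with $A\ge(2/e_n)\sqrt{\varepsilon_{n1}/n}$. No continuity bootstrap and no a priori control of $\Im S_n$ at any point is needed; if you want to keep your scheme, you must either adopt this change of variable or supply a genuine argument for your base point.
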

\begin{proof}
Consider $x$ such that $|\tilde{x}-a_n|\le \frac 2{e_n}-\sqrt{\frac{\varepsilon_{n1}}{c_1(\mu)n}}$.
By (\ref{th4bpro3a1}) and (\ref{th4bpro3a2}) we have $|x-\tilde{x}|\le 3$ and $|S_n(x)|\le 7$.
In view of Proposition~\ref{th4bpro1c} and (\ref{Pas5.1}), it is easy to see that
\begin{equation}\notag
 \Im T_n(\tilde{x})\ge \frac 14\Big(\frac{\varepsilon_{n1}}{c_1(\mu) n}\Big)^{1/4}\ge c(\mu)(\tau(\{|u|>\delta_n\sqrt{n-1}\}))^{1/4}.
\end{equation}
By (\ref{th4b.2*}), we see that
\begin{equation}\notag
 |x(\tilde x)-\tilde x|\le c(\mu)\tau(\{|u|>\delta_n\sqrt{n-1}\})\frac{1+|S_n(x)|^2}{(\Im T_n(\tilde x))^{2}}
 \le c(\mu)\sqrt{\tau(\{|u|>\delta_n\sqrt{n-1}\})}. 
\end{equation}
Since $c(\mu)$ does not depend on $c_1(\mu)$, we conclude finally
\begin{equation}\label{th4bpro4**.1}
|x(\tilde x)-\tilde x|\le\frac 1{100}\sqrt{\frac{\varepsilon_{n1}}n}. 
\end{equation}
The function $x(\tilde x)$ is monotone, continuous and the assertion of the proposition follows
at once from (\ref{th4bpro4**.1}).
\end{proof}
\begin{proposition}\label{th4bpro4***}
We have the bounds
\begin{equation}\label{th4bpro4***.1}
|\Im S_n(x)-\Im T_n(x)|\le \frac{\varepsilon_{n1}}n\,\frac {c(\mu)}{((2/e_n)^2-(x-a_n)^2)^{3/2}},\quad x\in I_{n}^*.
\end{equation}
\end{proposition}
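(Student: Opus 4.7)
The plan is to decompose
$$\Im S_n(x) - \Im T_n(x) = h(x) + \bigl(\Im T_n(\tilde{x}) - \Im T_n(x)\bigr),$$
using the definitions of $\tilde x$ and $h$, and to bound each term by $c(\mu)(\varepsilon_{n1}/n)/((2/e_n)^2 - (x-a_n)^2)^{3/2}$. Both bounds hinge on the auxiliary lower bound
$$\Im S_n(x) \ge c(\mu) \sqrt{(2/e_n)^2 - (x-a_n)^2}, \qquad x \in I_n^*,$$
which I establish first. Since $\Im S_n(x) \ge \Im T_n(\tilde x)$ by the definition of $h$, it suffices to bound $\Im T_n(\tilde x)$ from below. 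Proposition~\ref{th4bpro4**} gives $(2/e_n)^2 - (\tilde x - a_n)^2 \ge \tfrac 12((2/e_n)^2 - (x-a_n)^2)$ for $x\in I_n^*$, while Proposition~\ref{th4bpro1c} yields
$$\Im T_n(\tilde x) = \frac{1-b_n}{2}\sqrt{(2/e_n)^2 - (\tilde x - a_n)^2} + \frac{\varepsilon_{n1}}{n}\cdot\frac{c(\mu)\,\theta}{\sqrt{4-(e_n(\tilde x - a_n))^2}}.$$
Since $(2/e_n)^2 - (x-a_n)^2 \ge c(\mu)\sqrt{\varepsilon_{n1}/n}$ on $I_n^*$, the leading term dominates the error, producing the required lower bound.

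With this in hand, Proposition~\ref{th4bpro3} bounds $h(x)$ by $c(\mu)\tau(\{|u|>\delta_n\sqrt{n-1}\})(1+|S_n(x)|^2)/(\Im S_n(x))^3$. Proposition~\ref{th4bpro3a} shows $|S_n(x)|\le c(\mu)$ on $I_n^*$, while Chebyshev's inequality combined with (\ref{Pas5.0}) and the definition $\varepsilon_{n1}=c_1(\mu)(\eta(n;\tau)+1/\sqrt n)$ gives
$$\tau(\{|u|>\delta_n\sqrt{n-1}\})\le \frac{1}{\delta_n^2(n-1)}\int_{|u|>\delta_n\sqrt{n-1}}u^2\,\tau(du)\le c(\mu)\frac{\varepsilon_{n1}}{n}.$$
Substituting the lower bound on $\Im S_n(x)$ into the denominator produces the required estimate for $h(x)$.

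For the second piece, Proposition~\ref{th4bpro4*} together with the lower bound on $\Im S_n(x)$ and the boundedness of $|S_n(x)|$ gives $|x-\tilde x|\le c(\mu)(\varepsilon_{n1}/n)/((2/e_n)^2 - (x-a_n)^2)$. Proposition~\ref{th4bpro1b}, whose denominator is controlled using Proposition~\ref{th4bpro4**}, then yields
$$|T_n(\tilde x)-T_n(x)|\le c(\mu)\frac{|x-\tilde x| + \varepsilon_{n1}/n}{\sqrt{4-(e_n(x-a_n))^2}}\le c(\mu)\frac{\varepsilon_{n1}/n}{((2/e_n)^2 - (x-a_n)^2)^{3/2}},$$
which completes the decomposition. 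The main obstacle is the initial lower bound on $\Im S_n(x)$: one must verify that the $\varepsilon_{n1}/n$-sized error in Proposition~\ref{th4bpro1c} is negligible compared with the leading square-root term, which is precisely why the proposition is stated on the interval $I_n^*$ (with margin $\sqrt{\varepsilon_{n1}/n}$) rather than on $I_n$ (whose margin $\varepsilon_{n1}/n$ would be too tight).
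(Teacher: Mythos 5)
Your proof is correct and follows essentially the same route as the paper: the same decomposition $\Im S_n(x)-\Im T_n(x)=h(x)+(\Im T_n(\tilde x)-\Im T_n(x))$, with $h(x)$ controlled via Propositions~\ref{th4bpro3}, \ref{th4bpro4**} and the tail bound $\tau(\{|u|>\delta_n\sqrt{n-1}\})\le c(\mu)\varepsilon_{n1}/n$, and the second term controlled via Propositions~\ref{th4bpro4*} and \ref{th4bpro4**}. The only (cosmetic) difference is that you bound $|\Im T_n(\tilde x)-\Im T_n(x)|$ by invoking the Lipschitz-type estimate of Proposition~\ref{th4bpro1b}, whereas the paper redoes the equivalent difference-of-squares computation on the explicit Meixner formula of Proposition~\ref{th4bpro1c}.
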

\begin{proof}
Let $x\in I_{n}^*$. 
We have the formula
\begin{align}\label{th4bpro4***.2}
\Im S_n(x)-\Im T_n(x)=\Im S_n(x)-\Im T_n(\tilde x)+\Im T_n(\tilde x)-\Im T_n(x)=h(x)+\Im T_n(\tilde x)-\Im T_n(x).
\end{align}

By Propostion~\ref{th4bpro4**}, if $x\in I_n^*$, then $\tilde{x}\in I_n$. 
We see, by Propostion~\ref{th4bpro1c}, that, for such $x$,
\begin{align}
\Im T_n(x)&=\frac 12\sqrt{(1-b_n)^2(x-a_n)^2-4(1-d_n)}+\frac{\varepsilon_{n1}}n\,\frac{c(\mu)\,\theta}{\sqrt{(e_n(x-a_n))^2-4}},\notag\\
\Im T_n(\tilde{x})&=\frac 12\sqrt{(1-b_n)^2(\tilde{x}-a_n)^2-4(1-d_n)}+\frac{\varepsilon_{n1}}n\,\frac{c(\mu)\,\theta}{\sqrt{(e_n(\tilde{x}-a_n))^2-4}}.\label{th4bpro4***.3}
\end{align}
Using again Proposition~\ref{th4bpro4**} we obtain
\begin{align}
(\Im T_n(\tilde x))^2-(\Im T_n(x))^2&=\frac 14(1-b_n)^2(x-\tilde x)(x+\tilde x-2a_n)
+\frac{\varepsilon_{n1}}n\,\sqrt{1-d_n}\notag\\
&+\Big(\frac{\varepsilon_{n1}}n\Big)^2\,\frac{c(\mu)\,\theta}{(e_n(x-a_n))^2-4)}.\notag
\end{align}

By Propositions~\ref{th4bpro3}-- \ref{th4bpro4**}, and the formula (\ref{th4bpro4***.3}), we conclude that
\begin{align}
h(x)&\le c(\mu)\tau(\{|u|>\delta_n\sqrt{n-1}\})/(\Im T_n(\tilde x))^3 \le c(\mu)\tau(\{|u|>\delta_n\sqrt{n-1}\})/(\Im T_n(x))^3 \notag\\
&\le c(\mu)\tau(\{|u|>\delta_n\sqrt{n-1}\})/((2/e_n)^2-(x-a_n)^2)^{3/2}, \label{th4bpro4***.4}\\
|x-\tilde x|&\le c(\mu)\tau(\{|u|>\delta_n\sqrt{n-1}/3\})/((2/e_n)^2-(x-a_n)^2).\label{th4bpro4***.5}
\end{align}
Therefore, using (\ref{th4bpro4***.5}), we get
\begin{align}\label{th4bpro4***.6}
|\Im T_n(\tilde x)-\Im T_n(x)|&=\frac{|(\Im T_n(\tilde x))^2-(\Im T_n(x))^2|}{\Im T_n(\tilde x)+\Im T_n(x)}\le 
\frac{|(\Im T_n(\tilde x))^2-(\Im T_n(x))^2|}{\Im T_n(x)}\notag\\
&\le \frac{c(\mu)|\tilde{x}-x|}{\Im T_n(x)}+\frac{\varepsilon_{n1}}n\,\frac{c(\mu)}{\Im T_n(x)}\notag\\
&\le\frac{c(\mu)\tau(\{|u|>\delta_n\sqrt{n-1}\})}{(\Im T_n(x))^3}
+\frac{\varepsilon_{n1}}n\,\frac{c(\mu)}{\Im T_n(x)}
\le\frac{\varepsilon_{n1}}n\,\frac{c(\mu)}{(\Im T_n(x))^3}.
\end{align}
Applying (\ref{th4bpro4***.4}) and (\ref{th4bpro4***.6}) to (\ref{th4bpro4***.2}) we arrive at the assertion of the proposition.
\end{proof}

\begin{proposition}\label{th4bpro4}
We have the bounds
\begin{equation}\label{th4bpro4.1}
|\Re S_n(x)-\Re T_n(x)|\le \frac{\varepsilon_{n1}}n\,\frac {c(\mu)}{(2/e_n)^2-(x-a_n)^2},\quad x\in I_{n}^*.
\end{equation} 
\end{proposition}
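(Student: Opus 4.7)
The plan is to reduce $|\Re S_n(x)-\Re T_n(x)|$ to an estimate of $|\Re T_n(\tilde x)-\Re T_n(x)|$ via the defining identity $\Re S_n(x)=\Re T_n(\tilde x)$, and then exploit the explicit formula from Proposition~\ref{th4bpro1c}. The key structural observation is that for $y\in I_n$ both $\sqrt{(y-a_n)^2-4/e_n^2}$ and $\sqrt{(e_n(y-a_n))^2-4}$ are purely imaginary, so extracting the real part from Proposition~\ref{th4bpro1c} yields
$$\Re T_n(y)=a_n+\frac{1+b_n}{2}(y-a_n)+\frac{\varepsilon_{n1}}{n}\cdot\frac{c(\mu)\,\theta'}{\sqrt{(2/e_n)^2-(y-a_n)^2}},$$
with real $\theta'$ satisfying $|\theta'|\le 1$. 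Crucially, the leading variation of $\Re T_n$ is linear in $y$; this is what will eventually produce the sharper $((2/e_n)^2-(x-a_n)^2)^{-1}$ bound, rather than the $((2/e_n)^2-(x-a_n)^2)^{-3/2}$ bound obtained for the imaginary part in Proposition~\ref{th4bpro4***}.

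I then control $|\tilde x-x|$ using Proposition~\ref{th4bpro4*}. Three ingredients feed into the resulting ratio: first, $\tau(\{|u|>\delta_n\sqrt{n-1}\})\le c(\mu)\eta(n;\tau)/(n-1)\le c(\mu)\varepsilon_{n1}/n$ by the definitions of $\eta(n;\tau)$ and $\varepsilon_{n1}$; second, $|S_n(x)|$ is bounded on $I_n^*$ by Proposition~\ref{th4bpro3a}; third, $\Im S_n(x)\ge \Im T_n(\tilde x)\ge c(\mu)\sqrt{(2/e_n)^2-(\tilde x-a_n)^2}$, using the imaginary-part analogue of the expansion above (valid because, by Proposition~\ref{th4bpro4**}, $\tilde x$ lies well inside $I_n$ and the error in Proposition~\ref{th4bpro1c} is dominated by the semicircular leading term). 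Applying Proposition~\ref{th4bpro4**} to pass from $\tilde x$ to $x$ in the denominator gives
$$|\tilde x-x|\le\frac{c(\mu)\,\varepsilon_{n1}/n}{(2/e_n)^2-(x-a_n)^2}.$$

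Combining this with the expansion for $\Re T_n$, and once more using Proposition~\ref{th4bpro4**} to identify up to constants the residual square-root factors evaluated at $\tilde x$ and at $x$, I obtain
$$|\Re T_n(\tilde x)-\Re T_n(x)|\le\frac{c(\mu)\,\varepsilon_{n1}/n}{(2/e_n)^2-(x-a_n)^2}+\frac{c(\mu)\,\varepsilon_{n1}/n}{\sqrt{(2/e_n)^2-(x-a_n)^2}}.$$
Since $(2/e_n)^2-(x-a_n)^2$ is uniformly bounded above on $I_n^*$, its reciprocal dominates its inverse square root up to a $\mu$-dependent constant, so the second term is absorbed into the first and the bound \eqref{th4bpro4.1} follows. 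The hardest step is the careful splitting into real and imaginary parts in Proposition~\ref{th4bpro1c}: the complex error $\theta/\sqrt{(e_n(y-a_n))^2-4}$ is imaginary to leading order for $y\in I_n$, but its real part still contributes a term of size $\varepsilon_{n1}/(n\sqrt{(2/e_n)^2-(y-a_n)^2})$, which must be tracked through the bookkeeping so that the correct exponent $-1$ (rather than $-3/2$) of the singularity is preserved.
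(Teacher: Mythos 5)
Your proposal is correct and follows essentially the same route as the paper: reduce via $\Re S_n(x)=\Re T_n(\tilde x)$ to $|\Re T_n(\tilde x)-\Re T_n(x)|$, use the near-linearity of $\Re T_n$ from Proposition~\ref{th4bpro1c}, and control $|x-\tilde x|$ by $c(\mu)(\varepsilon_{n1}/n)/((2/e_n)^2-(x-a_n)^2)$, comparing edge distances at $x$ and $\tilde x$ via Proposition~\ref{th4bpro4**}. The only cosmetic difference is that the paper simply cites its intermediate estimate (\ref{th4bpro4***.5}) for $|x-\tilde x|$, whereas you re-derive it from Propositions~\ref{th4bpro4*}, \ref{th4bpro3a} and \ref{th4bpro4**}, exactly as the paper does inside the proof of Proposition~\ref{th4bpro4***}.
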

\begin{proof}
Since $\Re S_n(x)-\Re T_n(x)=\Re T_n(\tilde{x})-\Re T_n(x)$, we conclude, using Proposition~\ref{th4bpro1c} 
and (\ref{th4bpro4***.5}),
\begin{align}
|\Re T_n(\tilde{x})-\Re T_n(x)|&\le c(\mu)|\tilde{x}-x|+ \frac{\varepsilon_{n1}}n\,\frac{c(\mu)}{(e_n(\tilde{x}-a_n))^2-4}
+ \frac{\varepsilon_{n1}}n\, \frac{c(\mu)}{(e_n(x-a_n))^2-4}\notag\\
&\le \frac{\varepsilon_{n1}}n\,\frac{c(\mu)}{(e_n(x-a_n))^2-4}
\end{align}
for $x\in I_{n}^*$. The proposition is proved.
\end{proof}

\begin{proposition}\label{th4bpro5}
For $x\in I_n\setminus I_{n}^*$ and $\alpha\in(0,1]$, we have
\begin{align}\label{th4bpro5.1}
|\Im S_n(x)-\Im T_n(x)|&\le  \frac{\varepsilon_{n1}}n\,\frac {c(\mu)}{((2/e_n)^2-(x-a_n)^2)^{3\alpha/2}}
+\sqrt{\frac{\varepsilon_{n1}}n}\,\frac{c(\mu)}{((2/e_n)^2-(x-a_n)^2)^{1/2}}\notag\\
&+c(\mu)\, ((2/e_n)^2-(x-a_n)^2)^{\alpha/2}.\notag
\end{align} 
\end{proposition}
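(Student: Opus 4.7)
The strategy is to follow the proof of Proposition~\ref{th4bpro4***} while accounting for the failure of the lower bound $\Im T_n(x)\gtrsim\sqrt{D}$, with $D:=(2/e_n)^2-(x-a_n)^2$, in the transition region $I_n\setminus I_n^*$: in Proposition~\ref{th4bpro1c} the error term of order $(\varepsilon_{n1}/n)/\sqrt{D}$ can there be comparable to the main term $\frac{1-b_n}{2}\sqrt{D}$. I would start from the same decomposition
\[
\Im S_n(x)-\Im T_n(x)=h(x)+\bigl(\Im T_n(\tilde x)-\Im T_n(x)\bigr)
\]
and use Propositions~\ref{th4bpro3} and~\ref{th4bpro4*}, together with $\tau(\{|u|>\delta_n\sqrt{n-1}\})\le c(\mu)\varepsilon_{n1}/n$ (which follows from the definition of $\eta(n;\tau)$ and $\varepsilon_{n1}$) and the bound $1+|S_n(x)|^2\le c(\mu)$ from Proposition~\ref{th4bpro3a}, to derive
\[
h(x)\le\frac{c(\mu)\varepsilon_{n1}/n}{(\Im S_n(x))^3},\qquad |x-\tilde x|\le\frac{c(\mu)\varepsilon_{n1}/n}{(\Im S_n(x))^2}.
\]

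I would then split into two regimes according to whether $\Im S_n(x)\ge c^{-1}D^{\alpha/2}$ or not. In the non-degenerate regime $\Im S_n(x)\ge c^{-1}D^{\alpha/2}$, substituting the threshold into the sharp estimates immediately gives $h(x)\le c(\mu)(\varepsilon_{n1}/n)D^{-3\alpha/2}$, i.e.\ the first term of the claim; combined with Proposition~\ref{th4bpro1b} applied at $x$ and $\tilde x$ (once one verifies $\tilde x\in I_n$ with $D(\tilde x)\sim D(x)$) and with the error estimate for $\Im T_n$ coming from Proposition~\ref{th4bpro1c}, one obtains a bound of order $(\varepsilon_{n1}/n)D^{-1/2}$ on $|\Im T_n(\tilde x)-\Im T_n(x)|$ modulo contributions absorbed into the first term, and in turn this is controlled by the second term $c(\mu)\sqrt{\varepsilon_{n1}/n}\,D^{-1/2}$ using $\varepsilon_{n1}/n\le\sqrt{\varepsilon_{n1}/n}$. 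In the degenerate regime $\Im S_n(x)<c^{-1}D^{\alpha/2}$ the trivial bound
\[
|\Im S_n(x)-\Im T_n(x)|\le\Im S_n(x)+\Im T_n(x)\le c(\mu)D^{\alpha/2}+c(\mu)\sqrt D\le c(\mu)D^{\alpha/2},
\]
in which $\Im T_n(x)\le c(\mu)\sqrt D$ follows from Proposition~\ref{th4bpro1c} and $\sqrt D\le D^{\alpha/2}$ since $\alpha\le 1$ and $D\le 1$, supplies the third term.

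The main obstacle is the verification, in the non-degenerate regime, that $\tilde x$ lies in $I_n$ and has $D(\tilde x)$ comparable to $D(x)$, so that Propositions~\ref{th4bpro1b} and~\ref{th4bpro1c} are simultaneously applicable at $x$ and $\tilde x$. This calls for a bootstrap analogue of Proposition~\ref{th4bpro4**}, in which the lower bound $\Im T_n(\tilde x)\gtrsim(\varepsilon_{n1}/n)^{1/4}$ used there is replaced by the weaker $\Im T_n(\tilde x)\gtrsim D^{\alpha/2}$, and one must show that the resulting control $|x-\tilde x|\le c(\mu)(\varepsilon_{n1}/n)D^{-\alpha}$ is still $o(\sqrt D)$; this is arranged by choosing the constant $c_1(\mu)$ in $\varepsilon_{n1}=c_1(\mu)(\eta(n;\tau)+1/\sqrt n)$ large enough to absorb the generic $c(\mu)$-constants that accumulate along the estimation chain.
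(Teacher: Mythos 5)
Your overall skeleton (the decomposition $\Im S_n(x)-\Im T_n(x)=h(x)+(\Im T_n(\tilde x)-\Im T_n(x))$, the dichotomy according to whether $\Im S_n(x)$ exceeds a power of order $D^{\alpha/2}$ with $D:=(2/e_n)^2-(x-a_n)^2$, the trivial bound giving the third term in the degenerate regime, and the derivation of the first term from Proposition~\ref{th4bpro3} together with $\tau(\{|u|>\delta_n\sqrt{n-1}\})\le c(\mu)\varepsilon_{n1}/n$) coincides with the paper's proof. The gap is in the non-degenerate regime, where you control $|x-\tilde x|$ by Proposition~\ref{th4bpro4*} and then feed it into Proposition~\ref{th4bpro1b}. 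Your bound $|x-\tilde x|\le c(\mu)(\varepsilon_{n1}/n)D^{-\alpha}$ produces a contribution of order $(\varepsilon_{n1}/n)D^{-\alpha-1/2}$ to $|\Im T_n(\tilde x)-\Im T_n(x)|$, and this is neither absorbed into the first term of the claim (that would need $D^{(\alpha-1)/2}\le c$, false for $\alpha<1$ as $D\to 0$) nor dominated by the second (that would need $D^{\alpha}\ge c^{-1}\sqrt{\varepsilon_{n1}/n}$). On $I_n\setminus I_n^*$ one only has $D\gtrsim\varepsilon_{n1}/n$; for instance with $\alpha=3/4$ and $D\asymp\varepsilon_{n1}/n$ your intermediate bound is of order $(\varepsilon_{n1}/n)^{-1/4}$, whereas the right-hand side of the proposition is only of order $(\varepsilon_{n1}/n)^{-1/8}$. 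For the same reason your proposed repair cannot work: the ratio of $(\varepsilon_{n1}/n)D^{-\alpha}$ to $\sqrt D$ is $\asymp(\varepsilon_{n1}/n)^{1/2-\alpha}\to\infty$ for every fixed $\alpha>1/2$ when $D\asymp\varepsilon_{n1}/n$, so the needed "$o(\sqrt D)$'' control, and with it the comparability $D(\tilde x)\sim D(x)$, fails by an exponent mismatch that no choice of the constant $c_1(\mu)$ (which merely rescales $\varepsilon_{n1}$) can fix.

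The paper closes precisely this point by a different, geometric input instead of Proposition~\ref{th4bpro4*}: it first treats the case $x,\tilde x\in I_n\setminus I_n^*$, where $|x-\tilde x|\le\sqrt{\varepsilon_{n1}/n}$ holds simply because both points lie in the same component of $I_n\setminus I_n^*$, whose length is at most $\sqrt{\varepsilon_{n1}/n}$ (they cannot lie in opposite components because $|x-\tilde x|\le 2\sqrt{n/(n-1)}$ by (\ref{th4bpro3a2})); then, as in Proposition~\ref{th4bpro4***}, $|\Im T_n(\tilde x)-\Im T_n(x)|\le c(\mu)(|x-\tilde x|+\varepsilon_{n1}/n)/\Im T_n(x)\le c(\mu)\sqrt{\varepsilon_{n1}/n}\,D^{-1/2}$, which is exactly the second term. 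The remaining positions of $\tilde x$ are handled separately: if $\tilde x\in I_n^*$, the estimate (\ref{th4b.2*}) applied at the boundary point $x_{n2}^*=a_n+2/e_n-\sqrt{\varepsilon_{n1}/n}$ plus the monotonicity of $x\mapsto\tilde x$ again gives $|x-\tilde x|\le c(\mu)\sqrt{\varepsilon_{n1}/n}$; if $\tilde x$ lies beyond $I_n$, one uses (\ref{th4.1*}), i.e. $\Im T_n(\tilde x)\le c(\mu)\sqrt{\varepsilon_{n1}/n}$, instead of a Lipschitz bound. To complete your argument you would need to import this case analysis on the position of $\tilde x$ (or some equally strong substitute for $|x-\tilde x|\le c(\mu)\sqrt{\varepsilon_{n1}/n}$); the rest of your proposal then goes through.
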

\begin{proof}
Let $x,\tilde{x}\in I_n\setminus I_n^*$ and $\alpha\in(0,1]$. We have the two possibilities
\begin{equation}
 a) \,\,\Im S_n(x)\ge(\Im T_n(x))^{\alpha}\quad\text{or}\quad b)\,\,\Im S_n(x)<(\Im T_n(x))^{\alpha}.\notag
\end{equation}
Consider the case $a)$. Then, by (\ref{th4b.1*}), we have
\begin{equation}\label{th4bpro5.2a}
h(x)\le c(\mu)\frac{\tau(\{|u|>\delta_n\sqrt{n-1}\})}{(\Im T_n(x))^{3\alpha}}\le c(\mu)\,\frac{\tau(\{|u|>\delta_n\sqrt{n-1}\})}{((2/e_n)^2-(x-a_n)^2)^{3\alpha/2}}.
\end{equation}
In addition, repeating the argument of the proof of Proposition~\ref{th4bpro4***} and using the inequality 
$|\tilde{x}-x|\le \sqrt{\frac{\varepsilon_{n1}}n}$, we obtain
\begin{equation}\notag
|\Im T_n(\tilde{x})-\Im T_n(x)|\le c(\mu)\frac{|\tilde{x}-x|}{\Im T_n(x)}+\frac{\varepsilon_{n1}} {n}\,
\frac{c(\mu)}{\Im T_n(x)}\le\sqrt{\frac{\varepsilon_{n1}}n}\frac{c(\mu)}
{((2/e_n)^2-(x-a_n)^2)^{1/2}}.
\end{equation}
Hence in the case $a)$
\begin{equation}\label{th4bpro5.2}
|\Im S_n(x)-\Im T_n(x)|\le \frac{\varepsilon_{n1}}{n}\,\frac{c(\mu)}{((2/e_n)^2-(x-a_n)^2)^{3\alpha/2}}
+\sqrt{\frac{\varepsilon_{n1}}n}\,\frac{c(\mu)}{((2/e_n)^2-(x-a_n)^2)^{1/2}}.
\end{equation}

Now consider the case $b)$. In this case we have the following simple estimate
\begin{equation}\label{th4bpro5.3}
|\Im S_n(x)-\Im T_n(x)|\le (\Im T_n(x))^{\alpha}+\Im T_n(x))\le c(\mu)((2/e_n)^2-(x-a_n)^2)^{\alpha/2}.
\end{equation}

It remains to consider the case when $x\in I_n\setminus I_n^*$ and $\tilde{x}\not\in I_n\setminus I_n^*$.

Let $x_{n2}^*=a_n+2/e_n-\sqrt{\frac{\varepsilon_{n1}}n}$ and $x_{n2}=a_n+2/e_n-\frac{\varepsilon_{n1}}n$. 
Assume that $x_{n2}^*<x\le x_{n2}$ and $\tilde{x}\in I_n^*$. Assume as well that $a)$ holds. By (\ref{th4b.2*}), 
we see that
$|\tilde{x}(x_{n2}^*)-x_{n2}^*|\le c(\mu)\sqrt{\frac{\varepsilon_{n1}}n}$. In addition $\tilde{x}(x)$ is a monotone increasing function, therefore
$|\tilde{x}(x)-x|\le c(\mu)\sqrt{\frac{\varepsilon_{n1}}n}$. Repeating the previous estimates we obtain the bounds (\ref{th4bpro5.2}) and (\ref{th4bpro5.3})
in the considered case. We prove the bounds (\ref{th4bpro5.2}) and (\ref{th4bpro5.3}) for $x_{n1}<x\le x_{n1}^*$, where 
$x_{n1}^*=a_n-2/e_n+\sqrt{\frac{\varepsilon_{n1}}n},\,x_{n1}=a_n-2/e_n+\frac{\varepsilon_{n1}}n$ and $\tilde{x}\in I_n^*$ 
in the same way.

Without loss of generality let us assume now that $\tilde{x}>x_{n2}$. 
Then, by (\ref{th4b.2*}) and (\ref{th4bpro4***.3}), we have
\begin{equation}\notag
|x-\tilde {x}|\le c(\mu)\frac{\tau(\{|u|>\delta_n\sqrt{n-1}\})}{(\Im T_n(x))^{2\alpha}}\le c(\mu)\frac{\tau(\{|u|>\delta_n\sqrt{n-1}\})}
{((2/e_n)^2-(x-a_n)^2)^{\alpha/2}}\le 1.  
\end{equation}
Then, by (\ref{th4.1*}), we conclude $\Im T_n(\tilde{x})\le c(\mu)\sqrt{\frac{\varepsilon_{n1}}n}$ and hence
\begin{equation}\notag
|\Im T_n(x)-\Im T_n(\tilde{x})|\le T_n(x)+c(\mu)\sqrt{\frac{\varepsilon_{n1}}n}\le c(\mu)((2/e_n)^2-(x-a_n)^2)^{1/2}
+c(\mu)\sqrt{\frac{\varepsilon_{n1}}n}.
\end{equation}
Since in our case (\ref{th4bpro5.2a}) holds, we arrive at the upper bound
\begin{align}
|\Im S_n(x)-\Im T_n(x)|&\le\frac{\varepsilon_{n1}}n\,\frac{c(\mu)}{((2/e_n)^2-(x-a_n)^2)^{3\alpha/2}}\notag\\
&+c(\mu)((2/e_n)^2-(x-a_n)^2)^{1/2}+c(\mu)\sqrt{\frac{\varepsilon_{n1}}n}.\notag
\end{align}
In the case $b)$ we have obviously the estimate (\ref{th4bpro5.3}). The proposition is proved.
\end{proof}

\begin{proposition}\label{th4bpro6}
For $x\in I_n\setminus I_n^*$,
\begin{equation}\notag
 |\Im S_n(x)-\Im T_n(x)|\le \sqrt{\frac{\varepsilon_{n1}}n}\frac{c(\mu)}{\sqrt{(2/e_n)^2-(x-a_n)^2}}.
\end{equation}
\end{proposition}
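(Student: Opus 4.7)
The strategy is to apply Proposition~\ref{th4bpro5} with the parameter $\alpha\in(0,1]$ tailored to $x$, chosen so that the first and third error terms in that bound coincide and are both absorbed into the target quantity $c(\mu)\sqrt{\varepsilon_{n1}/n}/\sqrt{D(x)}$, where $D(x):=(2/e_n)^2-(x-a_n)^2$. For $x\in I_n\setminus I_n^{*}$, $|x-a_n|$ lies in $(2/e_n-\sqrt{\varepsilon_{n1}/n},\,2/e_n-\varepsilon_{n1}/n]$; factoring $D(x)=(2/e_n-|x-a_n|)(2/e_n+|x-a_n|)$ and using $e_n\to 1$ gives
\[
c_1(\mu)\,\varepsilon_{n1}/n\,\le\,D(x)\,\le\,c_2(\mu)\,\sqrt{\varepsilon_{n1}/n}.
\]
Since the middle term of Proposition~\ref{th4bpro5} already matches the claimed form, only the first term $c(\mu)(\varepsilon_{n1}/n)/D^{3\alpha/2}$ and the third term $c(\mu)D^{\alpha/2}$ need uniform control.

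I would split $I_n\setminus I_n^{*}$ into two sub-regions $A_1:=\{x:D(x)\le\sqrt{\varepsilon_{n1}/n}\}$ and $A_2:=\{x:D(x)>\sqrt{\varepsilon_{n1}/n}\}$. On $A_1$, I set $\alpha(x):=\log(\varepsilon_{n1}/n)/(2\log D(x))$; the range of $D(x)$ forces $\alpha(x)\in[1/2,1]$ for $n\ge n_1(\mu)$, since $D\le\sqrt{\varepsilon_{n1}/n}$ gives $|\log D|\ge|\log(\varepsilon_{n1}/n)|/2$ and $D\ge c_1\varepsilon_{n1}/n$ with $c_1>1$ gives $|\log D|\le|\log(\varepsilon_{n1}/n)|$. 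By the defining equation $D^{2\alpha}=\varepsilon_{n1}/n$, the first error term collapses to $(\varepsilon_{n1}/n)/D^{3\alpha/2}=D^{\alpha/2}$, coinciding with the third. On $A_2$ (nonempty only because $c_2(\mu)$ may exceed $1$), I would set $\alpha=1$ and bound the two terms by direct substitution.

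On $A_1$, the remaining inequality $D^{\alpha/2}\le c(\mu)\sqrt{\varepsilon_{n1}/n}/\sqrt{D}$ rewrites, using $\sqrt{\varepsilon_{n1}/n}=D^{\alpha}$, as $D^{(1-\alpha)/2}\le c(\mu)$, which holds because $0<D<1$ and $\alpha\le 1$. On $A_2$, the first term has ratio $\sqrt{\varepsilon_{n1}/n}/D\le 1$ to the target, while the third term has ratio $D/\sqrt{\varepsilon_{n1}/n}\in[1,c_2(\mu)]$, so both are absorbed. Summing the three terms of Proposition~\ref{th4bpro5} in each sub-region yields the stated bound. The main point of care is the bookkeeping of the range of $D(x)$ to verify $\alpha(x)\in(0,1]$ on $A_1$ and to split out the boundary region $A_2$; after that the argument is a direct algebraic substitution.
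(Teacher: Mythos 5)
Your proposal is correct and follows essentially the same route as the paper: apply Proposition~\ref{th4bpro5} with an $x$-dependent exponent $\alpha$ chosen to make the first and third terms balance, then check that the balanced value is dominated by $\sqrt{\varepsilon_{n1}/n}\,/\sqrt{(2/e_n)^2-(x-a_n)^2}$ on $I_n\setminus I_n^*$. The only cosmetic difference is that the paper balances in terms of $2/e_n-|x-a_n|$ (getting the explicit value $(\varepsilon_{n1}/n)^{1/4}$ with no case split), while you balance in terms of $(2/e_n)^2-(x-a_n)^2$ and handle the bounded-factor discrepancy via the regions $A_1$, $A_2$; both are fine.
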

\begin{proof}
Note that, for every fixed $x\in I_n\setminus I_n^*$,
\begin{equation}\notag
\frac{\varepsilon_{n1}}n\,\frac {1}{(2/e_n-|x-a_n|)^{3\alpha/2}}= (2/e_n-|x-a_n|)^{\alpha/2}
\end{equation}
for
\begin{equation}\notag
 \alpha=\frac 12\,\frac{\log(\varepsilon_{n1}/n)}{\log(2/e_n-|x-a_n|)}\le 1.
\end{equation}
Moreover, for this $\alpha$,
\begin{equation}\notag
(2/e_n-|x-a_n|)^{\alpha/2}=\Big(\frac{\varepsilon_{n1}}n\Big)^{1/4}. 
\end{equation}
Therefore the assertion of the proposition follows immediately from Proposition~\ref{th4bpro5}.
\end{proof}

Now we finish the proof of Theorem~\ref{th7}. Return to the formulations of Theorem~\ref{th5} and \ref{th4.4*}.
Denote
$$
\rho_{n1}(x):=\tilde{\rho}_{n1}(x)+\frac{\varepsilon_{n1}}n\,\frac{c(\mu)\,\theta}{((2/e_n)^2-(x-a_n)^2)^{1/2}}
\quad\text{and}\quad
\rho_{n2}(x)=\tilde{\rho}_{n2}(x)
$$
for $x\in I_n$. The statement of Theorem~\ref{th7} for $x\in I_n$ follows immediately from Propositions~\ref{th4bpro4***}-- \ref{th4bpro4} and \ref{th4bpro6}.

It remains to prove (\ref{asden2}). From Theorem~2.6 \cite{ChG:2013} and the formula (\ref{loc.4}) it follows immediately that
\begin{equation}\label{th4bpro5.4}
\sup_{x\in\mathbb R}|F_n(x+a_n)-\int_{-\infty}^x v_n(u)\,du|\le c(\mu)\,\frac{\varepsilon_{n2}}n, 
\end{equation}
where $\varepsilon_{n2}:=\eta_q(n)\frac{\beta_q(\mu)}{n^{(q-4)/2}}$ if $\beta_q(\mu):=\int_{\mathbb R}|u|^q\,\mu(du)<\infty,\,4\le q<5$. 
Here 
$$
\eta_q(n)=\inf_{0<\varepsilon\le 10^{-1/2}}g_q(\varepsilon),\quad\text{where}\quad 
g_q(\varepsilon)=\varepsilon^{q_*}+\frac{\varepsilon^{q_*-5}}{\beta_{5-q_*}(\mu)}
\int_{|u|>\varepsilon\sqrt n}|u|^{5-q_*}\,\mu(du)
$$
with $q_*=5-\min\{q,5\}$. It is easy to see that
$\eta_q(n)$ are the functions such that $\eta_q(n)\le 10^{1+3/2}$ and $\eta_q(n)\to 0$ monotonically as $n\to\infty$.
If $\beta_q(\mu)<\infty,\,q\ge 5$, then $\varepsilon_{n2}:=\frac{\beta_5(\mu)}{n^{1/2}}$. Therefore (\ref{asden2}) holds with 
the indicated sequence $\{\varepsilon_{n2}\}$.

Theorem~\ref{th7} is completely proved.

\begin{remark}\label{rem8.11}
 It is obvious that for the considered above $\varepsilon_{n2}$ we have the properties:
 $\frac{\varepsilon_{n2}}n=o(1/n^{(q-2)/2})$ if $\beta_q(\mu)<\infty, \,4\le q<5$, and $\frac{\varepsilon_{n2}}n=O(1/n^{3/2})$ if
 $\beta_5(\mu)<\infty$.
\end{remark}

\begin{remark}\label{rem8.12}
If $m_{2k}(\mu)<\infty$ for some $k=3,\dots$, then we choose in (\ref{Pas5.0})
$\delta_n=1/\pi$ and define $\varepsilon_{n1}:=c_1(\mu)(n\tau(\mathbb R\setminus[-\sqrt{n-1}/\pi,\sqrt{n-1}/\pi])+1/\sqrt n)$.
In this case $\varepsilon_{n1}=O(1/\sqrt n)$. Repeating the argument of Sections 5--8 we obtain the statement of Theorem~\ref{th7}
with such $\varepsilon_{n1}$ and $\varepsilon_{n2}$ described before.
\end{remark}

\section{Asymptotic expansion of $\int_{\mathbb R} |p_n(x)-p_w(x)|\,dx$} 

In this section we shall prove Corollary~\ref{corth7.1}. Indeed, by the estimate (\ref{asden2}), 
we have, for $n\ge n_1$, 
\begin{align}\label{exp1.1}
\int_{\mathbb R} |p_n(x)-p_w(x)|\,dx=\int_{I_n} |p_n(x)-p_w(x)|\,dx+c(\mu)\,\theta\,\frac{\varepsilon_{n1}+\varepsilon_{n2}}n.
\end{align} 
Using Theorems~\ref{th7},
we easily conclude that
\begin{align}\notag
&\int_{I_n} |p_n(x)-p_w(x)|\,dx=\int_{I_n} |v_n(x-a_n)-p_w(x)|\,dx\notag\\
&+\theta\int_{I_n-a_n} |\rho_{n1}(x)|\,dx
+\theta\int_{I_n-a_n} |\rho_{n2}(x)|\,dx\notag\\ 
&=\int_{[-2,2]} |(1-a_nx)p_w(x)-p_w(x+a_n)|\,dx+c(\mu)\theta\,n^{-1}+\theta\int_{I_n-a_n} |\rho_{n1}(x)|\,dx\notag\\
&=\frac{|a_n|}{2\pi}\int_{[-2,2]} |x|\frac{|3-x^2|}{\sqrt{4-x^2}}\,dx+c(\mu)\theta\,n^{-1}
+\theta\int_{I_n-a_n} |\rho_{n1}(x)|\,dx\notag\\
&=\frac{2|a_n|}{\pi}+c(\mu)\theta\,n^{-1}+\theta\int_{I_n-a_n} |\rho_{n1}(x)|\,dx.\notag
\end{align}
By (\ref{asden1}), we see that
\begin{equation}\notag
\int_{I_n^*-a_n} |\rho_{n1}(x)|\,dx\le c(\mu)\Big(\frac{\varepsilon_{n1}}{n}\Big)^{3/4}.
\end{equation}
and, by (\ref{asden1*}),
\begin{equation}\notag
\int_{(I_n-a_n)\setminus(I_n^*-a_n)} |\rho_{n1}(x)|\,dx\le c(\mu)\Big(\frac{\varepsilon_{n1}}{n}\Big)^{3/4}.
\end{equation}

Applying these relations to (\ref{exp1.1}) we get the expansion (\ref{2.9}).

\section{Asymptotic expansion of the free entropy} 

In this section we prove Corollaries~\ref{corth7.2}. First we find an asymptotic expansion of 
the logarithmic energy $E(\mu_n)$ of the measure $\mu_n$. Recall that (see~\cite{HiPe:2000})
\begin{align}\label{exp.1}
-E(\mu_n)&=\int\int_{\mathbb R^2}\log|x-y|\,\mu_n(dx)\mu_n(dy)=I_1(\mu_n)+I_2(\mu_n)\notag\\
&:=\int\int_{I_n\times I_n}\log|x-y|\,\mu_n(dx)\mu_n(dy)+
\int\int_{\mathbb R^2\setminus (I_n\times I_n)}\log|x-y|\,\mu_n(dx)\mu_n(dy).
\end{align}
Using (\ref{asden3}) and the equality $\int_{\mathbb R}u^2\,p_n(u)\,du=1$,  
we get the inequality
\begin{equation}\notag
\int_{\mathbb R}(1+|y|)^2p_n(y)^2\,dy\le c(\mu)\int_{\mathbb R}(1+|y|)^2p_n(y)\,dy\le c(\mu).
\end{equation}
Therefore we conclude with the help of the Cauchy-Bunyakovsky inequality that
\begin{align}\notag
\int_{\mathbb R}|\log|x-y||\,p_n(y)dy\le \Big(\int_{\mathbb R}\frac{(\log|x-y|)^2}{(1+|y|)^2}\,dy\Big)^{1/2}
\Big(\int_{\mathbb R}(1+|y|)^2p_n(y)^2\,dy\Big)^{1/2}\le c(\mu).
\end{align}
Recalling (\ref{asden2}), we obtain
\begin{equation}\label{exp.2}
|I_2(\mu_n)|\le 4\int_{\mathbb R\setminus I_n}p_n(x)\,\int_{\mathbb R}|\log|x-y||\,p_n(y)\,dy\,dx\le 
c(\mu)\int_{\mathbb R\setminus I_n}p_n(x)\,dx
\le c(\mu)\,\frac{\varepsilon_{n2}}n. 
\end{equation}
Now we note that
\begin{align}
I_1(\mu_n)&=\int\int_{I_n\times I_n}\log|x-y|\,p_n(x)p_n(y)\,dxdy=I_{11}(\mu_n)+2I_{12}(\mu_n)
+I_{13}(\mu_n)\notag\\
&:=\int_{I_n\times I_n}\log|x-y|\,v_n(x-a_n)v_n(y-a_n)\,dxdy\notag\\
&+2\int\int_{I_n\times I_n}\log|x-y|\,(p_n(x)-v_n(x-a_n))v_n(y-a_n)\,dxdy\notag\\
&+\int\int_{I_n\times I_n}\log|x-y|\,(p_n(x)-v_n(x-a_n))(p_n(y)-v_n(y-a_n))\,dxdy.\label{exp.2a}
\end{align}

Using the form of $v_n(x)$ we easily conclude that 
\begin{align}\label{exp.3}
I_{11}(\mu_n)
=\int\int_{\mathbb R^2}\log|x-y|\,v_n(x)v_n(y)\,dx\,dy
+c\theta \Big(\frac{\varepsilon_{n1}}n\Big)^{3/2}.
\end{align}
Recalling the definition of $v_n(x)$ we see that
\begin{align}\label{exp.4}
&\int\int_{\mathbb R^2}\log|x-y|\,v_n(x)v_n(y)\,dx\,dy=\tilde{I}_1(v_n)+\tilde{I}_2(v_n)+\tilde{I}_3(v_n)
+\tilde{I}_4(v_n)+c(\mu)\theta n^{-2}\notag\\
&:=\Big(1+\frac 12 d_n-a_n^2-\frac 1{n}\Big)^2 \int\int_{\mathbb R^2}\log|x-y|\,
p_w(e_nx)\,p_w(e_ny)\,dx\,dy\notag\\
&-2\Big(1+\frac 12 d_n-a_n^2-\frac 1{n}\Big)a_n\int\int_{\mathbb R^2}x\log|x-y|\,
p_w(e_nx)\,p_w(e_ny)\,dx\,dy\notag\\
&+a_n^2\int\int_{\mathbb R^2}xy\log|x-y|\,p_w(e_nx)\,
p_w(e_ny)\,dx\,dy\notag\\
&-2\Big(b_n-a_n^2-\frac 1{n}\Big)\int\int_{\mathbb R^2}x^2\log|x-y|\,
p_w(e_nx)\,p_w(e_ny)\,dx\,dy+c(\mu)\theta n^{-2}. 
\end{align}

In view of $E(\mu_w)=1/4$ and $e_n^{-2}=1-d_n+2b_n+c(\mu)\theta n^{-2},\,
\log e_n=\frac{d_n}2-b_n+c(\mu)\theta n^{-2}$,
note that
\begin{align}\label{exp.5}
\tilde{I}_1(v_n)&=\Big(1+\frac 12 d_n-a_n^2-\frac 1{n}\Big)^2 e_n^{-2}
\Big(\int\int_{\mathbb R^2}\log|x-y|
p_w(x)p_w(y)\,dx\,dy-\log e_n\Big)\notag\\
&=-E(\mu_w)+\frac{a_n^2}2+c(\mu)\theta n^{-2}. 
\end{align}
Since the function $\int_{\mathbb R}\log|x-y|p_w(y)\,dy$ is even, we see that
$\tilde{I}_2(v_n)=0$. In order to calculate $\tilde{I}_3(v_n)$ we easily deduce that
\begin{equation}\label{exp.5a}
\int_{-2}^2 up_w(u)\log|x-u|\,du=-x+x^3/6,\quad x\in[-2,2]. 
\end{equation}
Therefore we obtain
\begin{equation}\label{exp.6}
\tilde{I}_3(v_n)=-a_n^2\int_{-2}^2xp_w(x)(x-x^3/6)\,dx+c(\mu)\theta n^{-2}=-\frac 23a_n^2+c(\mu)\theta n^{-2}. 
\end{equation}

Using the following well-known formula (see~\cite{HiPe:2000}, p. 197)
\begin{equation}\label{exp.6a}
\int_{-2}^2 p_w(u)\log|x-u|\,du=\frac{x^2}4-\frac 12,\quad x\in [-2,2],
\end{equation}
we deduce
\begin{equation}\label{exp.6b}
\int\int_{\mathbb R^2}x^2\log|x-y|\,
p_w(x)\,p_w(y)\,dx\,dy=\int_{-2}^2x^2\Big(\frac{x^2}4-\frac 12\Big)p_w(x)\,dx=0. 
\end{equation}
Therefore
\begin{equation}\label{exp.7}
\tilde{I}_4(v_n)=c(\mu)\theta\,n^{-2}. 
\end{equation}

By (\ref{exp.3})--(\ref{exp.7}), we arrive at the formula
\begin{equation}\label{exp.7aa}
I_{11}(\mu_n)=-E(\mu_w)-\frac 16a_n^2+c\theta \Big(\Big(\frac{\varepsilon_{n1}}n\Big)^{3/2}+\frac 1{n^2}\Big). 
\end{equation}

Now we note, using (\ref{exp.5a}), (\ref{exp.6a}), (\ref{exp.6b}), and the H\"older inequality that, for $x\in I_n-a_n$
and for any fixed positive $\delta$,
\begin{align}
&\int_{I_n-a_n}\log|x-y|\,v_n(y)\,dy\notag\\
&=\int_{\mathbb R}\log|x-y|\big(1+\frac 12 d_n-a_n^2-\frac 1n-a_ny-(b_n-a_n^2-\frac 1n)y^2\big)
\,p_w(e_ny)\,dy\notag\\
&+c(\mu,\delta)\theta\,\Big(\frac{\varepsilon_{n1}}n\Big)^{\frac 32-\delta}
=\frac{x^2}4-\frac 12-\log e_n+a_n\big(x-\frac{x^3}6\big)+c(\mu,\delta)\theta\,\Big(\frac{\varepsilon_{n1}}n\Big)^{\frac 32-\delta}, \notag
\end{align}
where $c(\mu,\delta)$ are positive constants depending on $\mu$ and $\delta$ only.

On the other hand, by (\ref{th4bpro5.4}), we see that
\begin{equation}\notag
\Big|\int_{I_n}x^k(p_n(x+a_n)-v_n(x))\,dx\Big|\le  c(\mu)\,\frac{\varepsilon_{n2}}n, \quad k=0,1,2,3.\notag 
\end{equation}
The last two relations give us finally
\begin{equation}
|I_{12}(\mu_n )|\le c(\mu,\delta)\,\Big(\frac{\varepsilon_{n1}}n\Big)^{\frac 32-\delta}+c(\mu)\,\frac{\varepsilon_{n2}}n.\label{exp.7a}
\end{equation}

It remains to estimate $I_{13}(\mu_n )$. Write
\begin{align}
I_{13}(\mu_n )&= I_{13,1}(\mu_n )+2I_{13,2}(\mu_n)
+I_{13,3}(\mu_n)\notag\\
&:=\Big(\int\int_{I_n^*\times I_n^*}+2\int\int_{I_n^*\times (I_n\setminus I_n^*)}
+\int\int_{(I_n\setminus I_n^*)\times (I_n\setminus I_n^*)}\Big)\notag\\
&\log|x-y|\,(p_n(x)-v_n(x-a_n))(p_n(y)-v_n(y-a_n))\,dxdy. \label{exp.8}
\end{align}
In order to estimate $I_{13,1}(\mu_n)$ we use the upper bound 
\begin{align}
\tilde{I}_{13,1}&:=\int_{I_n^*}|\log|x-y||\Big(
|\rho_{n1}(y-a_n)|+\rho_{n2}(y-a_n)\Big)\,dy\notag\\
&\le c(\mu)\frac{\varepsilon_{n1}}n\int_{I_n^*}|\log|x-y||\frac{dy}{(4-(e_n(y-a_n))^2)^{3/2}}+\int_{I_n^*}|\log|x-y||\rho_{n2}(y)\,dy.\notag
\end{align}
Let $p,q> 1$ and $\frac 1p+\frac 1q=1$. We assume that $q$ is closed to $1$, i.e., $1<q\le 1.01$. By H\"older inequality, for $x\in I_n^*$,
\begin{align}
 &\int_{I_n^*}|\log|x-y||\frac{dy}{(4-(e_n(y-a_n))^2)^{3/2}}\notag\\
 &\le \Big(\int_{I_n^*}|\log|x-y||^{p}\,dy\Big)^{1/p}
 \Big(\int_{I_n^*}\frac{dy}{(4-(e_n(y-a_n))^2)^{3q/2}}\,dy\Big)^{1/q}\le c(\mu,q)\Big(\sqrt{\frac{n}{\varepsilon_{n1}}}\Big)^{\frac 32-\frac 1q}\notag
\end{align}
and, by (\ref{asden1**}), 
\begin{align}
\int_{I_n^*}|\log|x-y||\rho_{n2}(y-a_n)\,dy\le \Big(\int_{I_n^*}|\log|x-y||^{p}\,dy\Big)^{1/p}\Big(\int_{I_n^*}\rho_{n2}(y-a_n)^q\,dy\Big)^{1/q}
\le \frac {c(\mu,q)}{n^{2/q}},\notag
\end{align}
where $c(\mu,q)>0$ is a constant depended on $\mu$ and $q$ only. 
From the two last upper bounds we get
\begin{equation}\label{fexp.1}
\tilde{I}_{13,1}\le c(\mu,q)\Big(\Big(\frac{\varepsilon_{n1}}n\Big)^{\frac 14+\frac 1{2q}}+\frac 1{n^{2/q}}\Big)
\le c(\mu,q)\Big(\frac{\varepsilon_{n1}}n\Big)^{\frac 14+\frac 1{2q}}.
\end{equation}
On the other hand it is easy to see that
\begin{align}
 \int_{I_n^*}\Big(
 |\rho_{n1}(y-a_n)|+\rho_{n2}(y-a_n)\Big)\,dy
 \le c(\mu)\Big(\Big(\frac{\varepsilon_{n1}}n\Big)^{\frac 34}+\frac 1{n^{2}}\Big)
 \le c(\mu)\Big(\frac{\varepsilon_{n1}}n\Big)^{\frac 34}.\label{fexp.2}
\end{align}
We conclude from ({\ref{fexp.1}) and ({\ref{fexp.2}) that
\begin{equation}\label{fexp.3}
 |I_{13,1}(\mu_n )|\le c(\mu,q)\Big(\frac{\varepsilon_{n1}}n\Big)^{1+\frac 1{2q}}.
\end{equation}

Now introduce the quantity
\begin{equation}\notag
\tilde{I}_{13,2}:=\int_{I_n\setminus I_n^*}|\log|x-y||\Big(
|\rho_{n1}(y-a_n)|+\rho_{n2}(y-a_n)\Big)\,dy.
\end{equation}
As above we obtain, for $x\in I_n$,
\begin{align}\notag
 &\int_{I_n\setminus I_n^*}|\log|x-y||\frac{dy}{(4-(e_n(y-a_n))^2)^{1/2}}\notag\\
 &\le \Big(\int_{I_n\setminus I_n^*}|\log|x-y||^{p}\,dy\Big)^{1/p}
 \Big(\int_{I_n\setminus I_n^*}\frac{dy}{(4-(e_n(y-a_n))^2)^{q/2}}\,dy\Big)^{1/q}
 \le c(\mu,q)\Big(\frac {\varepsilon_{n1}}n\Big)^{\frac 1{2q}-\frac {1}4}.\notag
\end{align}
Therefore we have
\begin{equation}\label{fexp.4}
\tilde{I}_{13,2}\le  c(\mu,q)
\Big(\frac{\varepsilon_{n1}}{n}\Big)^{\frac 1{2q}+\frac {1}4},\quad x\in I_n.
\end{equation}
We deduce from (\ref{fexp.2}) and (\ref{fexp.4}) 
\begin{equation}\label{fexp.5}
 |I_{13,2}(\mu_n )|\le c(\mu,q)
 \Big(\frac{\varepsilon_{n1}}{n}\Big)^{\frac 1{2q}+1}.
\end{equation} 

It remains to estimate $I_{13,3}(\mu_n )$. It is easy to verify that
\begin{equation}
 \int_{I_n\setminus I_n^*}\Big(|\rho_{n1}(y-a_n)|+\rho_{n2}(y-a_n)\Big)\,dy\le c(\mu,q)
\Big(\frac{\varepsilon_{n1}}n\Big)^{\frac 34}.\label{fexp.6}
\end{equation}
 
We obtain from (\ref{fexp.4}) and (\ref{fexp.6}) that
\begin{equation}\label{fexp.7}
 |I_{13,3}(\mu_n )|\le c(\mu,q)\Big(\frac{\varepsilon_{n1}}{n}\Big)^{\frac 1{2q}+1}.
\end{equation}
It remains to note that the upper bound
\begin{equation}\label{fexp.8}
 |I_{13}(\mu_n )|\le c(\mu,q)\Big(\frac{\varepsilon_{n1}}{n}\Big)^{\frac 1{2q}+1}.
\end{equation}
follows immediately from (\ref{fexp.3}), (\ref{fexp.5}), and (\ref{fexp.7}).

In view of (\ref{exp.1}), (\ref{exp.2}), (\ref{exp.7aa}), (\ref{exp.7a}) and (\ref{fexp.8}) we get
\begin{equation}\label{fexp.9}
-E(\mu_n)=-E(\mu_w)-\frac 16a_n^2+c(\mu,q)\theta\,\Big(\Big(\frac{\varepsilon_{n1}}{n}\Big)^{\frac 1{2q}+1}+
\frac {\varepsilon_{n2}}n \Big). 
\end{equation}
The assertion of Corollary~\ref{corth7.2} follows from this relation.

\section{Asymptotic expansion of the free Fisher information}

Now let us prove Corollary~\ref{corth7.3}. We shall show that the free Fisher information of the measure $\mu_n$ 
has the form (\ref{2.11}).
Denote 
\begin{align}\label{exp.9}
\Phi(\mu_n)=\Phi_1(\mu_n)+\Phi_2(\mu_n):=\frac{4\pi^2}3\int_{I_n}p_n(x)^3\,dx+
\frac{4\pi^2}3\int_{\mathbb R\setminus I_n}p_n(x)^3\,dx.
\end{align}
As before we see that, by (\ref{asden2}),
\begin{equation}\label{exp.10}
\Phi_2(\mu_n)\le c(\mu)\int_{\mathbb R\setminus I_n}p_n(x)\,dx\le c(\mu)\frac {\varepsilon_{n2}}n.
\end{equation}
On the other hand, by (\ref{asden}), we have 
\begin{equation}\label{exp.11}
\Phi_1(\mu_n)=\frac{4\pi^2}3\int_{I_n-a_n}v_n(x)^3\,dx+\frac{4\pi^2}3
\int_{I_n-a_n}\sum_{\scriptstyle k+l=3\atop\scriptstyle 0\le k\le 2,\,l\ge 1}v_n(x)^k(\rho_{n1}(x)+\rho_{n2}(x))^l\,dx. 
\end{equation}
In the sequal we consider nonnegative entire numbers $0\le k\le 2,\,l\ge 1$ and $k+l=3$ only.

We see, by (\ref{asden1**}), that
\begin{equation}\label{exp.12}
 \int_{I_n-a_n}|v_n(x)|^k|\rho_{n2}(x)|^l\,dx\le c(\mu)\frac 1{n^2}.
\end{equation}

Now we note, by (\ref{asden1}), that 
\begin{align}\label{exp.13}
 \int_{I_n^*- a_n}|v_n(x)|^k|\rho_{n1}(x)|^l\,dx&\le c(\mu)\Big(\frac {\varepsilon_{n1}}{n}\Big)^l\int_{I_n^*- a_n}
 \frac{dx}{(4-(e_nx)^2)^{(3l-k)/2}}\notag\\
& \le c(\mu)\Big(\frac {\varepsilon_{n1}}{n}\Big)^l\Big(\frac n{\varepsilon_{n1}}\Big)^{\frac{3l-k-2}4}
=c(\mu)\Big(\frac {\varepsilon_{n1}}{n}\Big)^{\frac 54}.
\end{align}
Furthermore, we have, using the bound (\ref{asden1*}),
\begin{align}\label{exp.14}
&\int_{(I_n-a_n)\setminus(I_n^*-a_n)}|v_n(x)|^k|\rho_{n1}(x)|^l\,dx\notag\\
&\le c(\mu)\int_{(I_n-a_n)\setminus(I_n^*-a_n)}|v_n(x)|^k\Big(\frac {\varepsilon_{n1}}{n(4-(e_nx)^2)}\Big)^{l/2}\,dx
\le c(\mu)\frac {\varepsilon_{n1}}n.
\end{align}
From (\ref{exp.13}) and (\ref{exp.14}) it follows
\begin{equation}\label{exp.15}
 \int_{I_n-a_n}|v_n(x)|^k|\rho_{n1}(x)|^l\,dx\le c(\mu)\,\frac {\varepsilon_{n1}}{n}.
\end{equation}
Applying (\ref{exp.12}) and (\ref{exp.15}) to (\ref{exp.11}) we obtain
\begin{equation}\label{exp.16}
\Phi_1(\mu_n)=\frac{4\pi^2}3\int_{I_n-a_n}v_n(x)^3\,dx+c(\mu)\theta\,\frac{\varepsilon_{n1}}n.
\end{equation}
It is easy to see that the integral on the right hand-side of (\ref{exp.16}) is equal to
\begin{align}
\Big(1&+3\Big(\frac 12 d_n-a_n^2-\frac 1n\Big)\Big)e_n^{-1}\int_{\mathbb R}p_w(x)^3\,dx
-3\Big(b_n-a_n^2-\frac 1n\Big)e_n^{-3}\int_{\mathbb R}x^2p_w(x)^3\,dx \notag\\
&+3a_n^2e_n^{-3}\int_{\mathbb R}x^2p_w(x)^3\,dx+c(\mu)\theta n^{-5/2}=\frac 3{4\pi^2}(1+a_n^2)
+c(\mu)\theta \, n^{-2}.\notag
\end{align}
Therefore we finally conclude by (\ref{exp.9})--(\ref{exp.11}) that
\begin{equation}\notag
\Phi(\mu_n)=1+a_n^2+\theta c(\mu)\frac{\varepsilon_{n1}+\varepsilon_{n2}}n=\Phi(\mu_w)+a_n^2
+c(\mu)\theta \,\frac{\varepsilon_{n1}+\varepsilon_{n2}}n.
\end{equation}
Thus, Corollary~\ref{corth7.3} is proved.




\begin{thebibliography}{99}
\itemsep=\smallskipamount

\bibitem{Akh:1965} Akhiezer,~N. I. 
{\em The~classical moment problem and some related questions 
in analysis.}
Hafner, New York (1965).

\bibitem{Ar:2004} Artstein,~S., Bally, K., Barthez, F., and Naor, A.
{\em Solution of Shannon's problem on monotonicity of entropy.}
Journal Amer. Math. Soc. {\bf 17}, 975--982 (2004).




\bibitem{Ba:1986} Barron, A. R.
\emph{Entropy and the central limit theorem.}
Ann. Probab., {\bf 14}, 336--342 (1986).

\bibitem{BaJoh:2004} Barron, A. R. and Johnson, O.
\emph{Fisher information inequality and the central limit theorem.}
Probab. Theory Related Fields, {\bf 129}, No 3, 391--409 (2004).


\bibitem{BelBer:2004} Belinschi,~S. T. and Bercovici, H.
\emph{Atoms and regularity for measures in a partially defined free convolution semigroup.}
Math. Z. {\bf 248}, 665--674 (2004).

\bibitem{BelBer:2007} Belinschi,~S. T. and Bercovici, H.
\emph{A new approach to subordination results in free probability.}
J. Anal. Math.  {\bf 101}, 357--365 (2007).

\bibitem{Bel:2008} Belinschi,~S. T. 
\emph{The Lebesgue decomposition of the free additive convolution of two probability 
distributions.}
Probab. Theory Relat. Fields  {\bf 142}, 125--150 (2008).




\bibitem{BeVo:1993} Bercovici,~H., and Voiculescu,~D.
\emph{Free convolution of measures with unbounded support.}
Indiana Univ. Math. J., {\bf 42}, 733--773 (1993).


\bibitem{BeVo:1995} Bercovici,~H., and Voiculescu,~D.
\emph{Superconvergence to the~central limit and failure
of the~Cram\'er theorem for free random variables.}
Probab. Theory Relat. Fields,
{\bf 102}, 215--222 (1995).




\bibitem{BeP:1999} Bercovici,~H., and Pata, V.
\emph{Stable laws and domains of attraction in free
probability theory.}
Annals of Math., {\bf 149}, 1023--1060, (1999).
With an appendix by Philippe Biane.






\bibitem{Bi:1998} Biane,~Ph.
\emph{Processes with free increments.}
Math. Z., {\bf 227}, 143--174 (1998).


\bibitem{BChG:2013} Bobkov, S. G., Chistyakov, G. P. and G\"oetze, F.
\emph{Rate of convergence and Edgeworth-type expansion in the entropic
central limit theorem.}
Ann. Probab. 
{\bf 41}, no. 4, 2479–2512 (2013).

\bibitem{BChG:2014} Bobkov, S. G., Chistyakov, G. P. and G\"oetze, F.
\emph{Fisher information and central limit theorem.}
Probab. Theory Relat. Fields,
{\bf 159}, no. 1-2, 1–59 (2014).



\bibitem{BoBr:2006} Bo\.zejko,~M., and Bryc,~W.
\emph{On a~class of free L\,evy
laws related to a~regression problem.}
J. Funct. Anal., {\bf 236}, 59--77 (2006).

 

\bibitem{ChG:2005} Chistyakov,~G. P. and G\"otze,~F.
\emph{The~arithmetic of distributions in free probability theory.}
Cent. Eur. J. Math., 9(5), 997--1050 (2011).
DOI: 10.2478/s11533-011-0049-4,
ArXiv: math/0508245.


\bibitem{ChG:2005a} Chistyakov,~G. P. and G\"otze,~F.
\emph{Limit theorems in free probability theory, I}.
Ann. Probab., {\bf 36}, 54--90 (2008).

\bibitem{ChG:2011} Chistyakov,~G. P. and G\"otze,~F.
\emph{Asymptotic expansions in CLT in free probability.}
ArXiv: 1109.4844. (2011)

\bibitem{ChG:2011a} Chistyakov,~G. P. and G\"otze,~F.
\emph{Rate of convergence in the entropic free CLT.}
ArXiv: 1112.5087 v. 1 math. PR (2011)


\bibitem{ChG:2013} Chistyakov,~G. P. and G\"otze,~F.
\emph{Asymptotic expansions in the CLT in free probability.}
Probab. Theory Related Fields {\bf 157} no. 1-2, 107–156 (2013). 





\bibitem{HiPe:2000} Hiai,~F. and Petz,~D. 
{\em The~semicircle law, free random variables and entropy.}
Math. Surveys Monogr., {\bf 77}, Amer. Math. Soc., Providence, RI (2000).

\bibitem{Ka:2007} Kargin,~V.
\emph{Berry--Essen for Free Random Variables.}
J. Theor. Probab., {\bf 20}, 381--395 (2007)

\bibitem{Ka:2007a} Kargin,~V.
\emph{On superconvergence of sums of free random variables.}
Anal. Probab., {\bf 35}, 1931--1949 (2007)

\bibitem{Ka:2007b} Kargin,~V.
\emph{A Proof of a Non-Commutative Central Limit Theorem
by the Lindeberg Method.} 
Electr. Communic. Probab., {\bf 12}, 36--50 (2007).




\bibitem{Ma:1992} Maassen,~H.
\emph{Addition of Freely Independent Random Variables.}
Journal of functional analysis, {\bf 106}, 409--438 (1992).



\bibitem{Me:1934} Meixner,~J.
\emph{Orthogonale Polynomsysteme mit einer besonderen Gestalt der
erzeugenden Funktion.}
J. London Math. Soc., {\bf 9}, 6-13 (1934).

\bibitem{NSp:2006} Nica, A. and Speicher, R.
{\em Lectures on the Combinatorics of Free Probability.}
London Math. Society, Lecture Note Series 335, Cambridge Uni. Press (2006).

\bibitem{P:1996} Pata,~V.
\emph{The~central limit theorem for free additive convolution.}
J. Funct. Anal., {\bf 140}, 359--380 (1996).



\bibitem{SaYo:2001} Saitoh,~N. and Yoshida,~H.
\emph{The~infinite divisibility and orthogonal polynomials with a~constant
recursion formula in free probability theory.}
Probab. Math. Statist., {\bf 21}, 159--170 (2001).


\bibitem{Sh:2007} Shlyakhtenko,~D.
\emph{A free analogue of Shannon's problem on monotonicity of entropy}
Advances in Mathematics., {\bf 208}, no 2, 824--833 (2007).

\bibitem{Sp:1998} Speicher,~R.
\emph{Combinatorical theory of the~free product with amalgamation and 
operator-valued free probability theory.}
Mem. A.M.S., {\bf 627} (1998).


\bibitem{Vo:1985} Voiculescu,~D.V.
\emph{Symmetries of some reduced free product $C^*$--algebras
Operator Algebras and their connections with Topology
and ergodic theory.}
Lecture Notes in Mathematics, {\bf 1132}, 556--588 (1985).



\bibitem{Vo:1986} Voiculescu,~D.V.
\emph{ Addition of certain noncommuting random variables.}
J. Funct. Anal., {\bf 66}, 323--346 (1986).


\bibitem{Vo:1987} Voiculescu,~D.V.
\emph{Multiplication  of certain noncommuting random variables.}
J. Operator Theory, {\bf 18}, 223--235 (1987). 


\bibitem{Vo:1992} Voiculesku,~D., Dykema,~K., and Nica,~A. 
{\em Free random variables.}
CRM Monograph Series, No 1, A.M.S., Providence, RI (1992).

\bibitem{Vo:1993} Voiculescu,~D.V.
\emph{The~analogues of entropy and Fischer's information measure in free
probability theory.}
I. Comm. Math. Phys., {\bf 155}, 71--92 (1993).

\bibitem{Vo:1997} Voiculescu,~D.V.
\emph{The analogues of entropy and of Fisher's information measure in free probability theory. IV.
Maximum entropy and freeness.}
In Free Probability Theory (Waterloo, ON, 1995). Fields Institute Communications {\bf 12},
293--302. Amer. Math. Soc., Providence, RI. 

\bibitem{Vo:2000} Voiculescu,~D.V.
\emph{Lectures on free probability theory.}
Lectures on Probability theory and Statistics. Lecture Notes in Math. 
{\bf 1738}, 279--349 (2000). Springer, Berlin.

\bibitem{Wa:2010} Wang, Ji-Ch. 
\emph{Local limit theorems in free probability theory.}
Ann. Probab., {\bf 38}, no 4, 1492--1506 (2010).
\end{thebibliography}
\end{document}